\newcommand{\s}{\mathbf{s}}
\newcommand{\E}{{\mathcal E}}
\newcommand{\Z}{{\mathbb{Z}}}
\newcommand{\C}{{\mathbb{C}}}
\newcommand{\Cx}{{\mathbb{C^\times}}}
\newcommand{\Es}{\E_{\s}}
\newcommand{\slhs}{ \widehat{\sl}_{\s}}
\newcommand{\slh}{\widehat{\mathfrak{sl}}}
\renewcommand{\sl}{\mathfrak{sl}}
\newcommand{\gl}{\mathfrak{gl}}
\newcommand{\Smn}{\mathcal{S}_{m|n}}
\newcommand{\Sym}{\mathrm{Sym}}
\newcommand{\lb}[1]{\llbracket #1 \rrbracket}
\newcommand{\Us}{U_q\widehat{\mathfrak{sl}}_\s}
\newcommand{\bs}{\boldsymbol}
\newcommand{\mc}{\mathcal}
\newcommand{\la}{\lambda}
\newcommand{\La}{\Lambda}
\numberwithin{equation}{section}
\newtheorem{thm}{Theorem}[section]
\newtheorem{prop}[thm]{Proposition}
\newtheorem{lem}[thm]{Lemma}
\newtheorem{dfn}[thm]{Definition}
\begin{document}

\title{Representations of quantum toroidal superalgebras and plane $\s$-partitions}

\author{Luan Bezerra and Evgeny Mukhin}

\address{LB: Shenzhen International Center for Mathematics, Southern University of Science and
Technology, Shenzhen, China}\email{bezerra.luan@gmail.com}

\address{EM: Department of Mathematics,
	Indiana University Indianapolis,
	402 N. Blackford St., LD 270,
	Indianapolis, IN 46202, USA}\email{emukhin@iupui.edu}

\begin{abstract}
We construct Fock and MacMahon modules for the quantum toroidal superalgebra $\Es$ associated with the Lie superalgebra $\gl_{m|n}$ and parity $\s$. The bases of the Fock and MacMahon modules are labeled by super-analogs of partitions and plane partitions with various boundary conditions, while the action of generators of $\Es$ is given by Pieri type formulas. We study the corresponding characters.
\end{abstract}

\maketitle

\section{Introduction}
Representations of quantum toroidal algebras occur in many areas of mathematics and mathematical physics. 
In string theory, modules of quantum toroidal algebras provide an algebraic viewpoint of brane webs \cite{AFS}, which can be seen as the toric diagrams of Calabi-Yau threefolds \cite{LV}. The so-called MacMahon modules describe BPS states in terms of crystal melting, and their characters are used to compute Donaldson-Thomas invariants, \cite{LY}, \cite{ORV}, \cite{OY},  \cite{Su}. The quantum toroidal algebra action on the cohomology of instantons moduli space plays a key role in the AGT correspondence  \cite{MO}. See \cite{LF} for a review on AGT correspondence.
In the simplest examples, the Fock modules of quantum toroidal algebras are related to equivariant K-theory of Hilbert schemes \cite{FT}, \cite{SV}, Laumon spaces \cite{FFNR}, and to Macdonald theory \cite{FHHSY}, \cite{FFJMM}.

In this paper, we study the representation theory of quantum toroidal superalgebras $\Es$ associated with the Lie superalgebra $\gl_{m|n}$ and parity $\s$, where $m\neq n$. These algebras have been already observed in the context of the BPS states, see \cite[Section 8.3]{LY}, \cite{Z}. While similar results are expected in the case of $m= n$, the appropriate definition of $\Es$ and the proof of it's properties for $m=n$ is missing at the moment. We plan to address this issue in a separate text.

The superalgebras $\Es$ (with $m\neq n$) for an arbitrary parity $\s=(s_1,\dots,s_{m+n})$, $s_i\in\{-1,1\}$, were introduced in \cite{BM2}. Fock representations of $\Es$ in the standard parity are given in \cite{BM1}. In contrast to \cite{BM1}, which deals with ``perpendicular" generators acting on the Fock modules by vertex operators, the present paper studies the modules via ``horizontal" generators, which act by Pieri type formulas. The ``horizontal" formulation is given in terms of combinatorics of partitions, plane partitions, and their supersymmetric analogs. This is the form used in the multiple applications mentioned above.

Logically, we follow the path of \cite{FJMM}, where the representations of quantum toroidal algebras were constructed in the even case ($n=0$). Namely, we start with modules of level zero (vector and covector representations), which are easy to construct explicitly, then we use the semi-infinite wedge construction to obtain the level one (or minus one) Fock modules, and then we do the inductive limit again to build the MacMahon modules of general level. However, we had to make several important improvements to the constructions of \cite{FJMM} to cover the supersymmetric case. In particular, in the even case, our method is applicable for a larger class of modules (covering a family of modules of level $-1$ and a family of non-integrable modules of level $1$).

In the construction of the Fock space in the even case, the partitions representing the basis of the module are constructed by rows, and each row corresponds to a basis element in the vector representation. In contrast, we represent partitions using Frobenius coordinates where rows correspond to vectors in vector representations while columns correspond to vectors in covector representations. An example of such a picture is given in Figure \ref{partition pic}. 

\begin{figure}[H]
\centering
\begin{tikzpicture}
\node at (-40pt,7pt) {$\ket{(8,5,5,4,4,2),(7,2,1,1)}$:};
\tgyoung(40pt,0cm,01234012,401234,3401234,2340123,1::;40123,0::::;01,4,3)
{\Yfillcolour{gray}
\Yfillopacity{.3}
\tgyoung(40pt,0pt,_8,:_5,::_5,:::_4,::::_4,:::::_2)}
{\Yfillcolour{blue}
\Yfillopacity{.35}
\tgyoung(40pt,0pt,:,|7:,:|2:,::;:,:::;)}
\end{tikzpicture}
\caption{Young diagram depicting the $\s$-partition corresponding to the vector $\ket{(8,5,5,4,4,2),(7,2,1,1)}$. }
\end{figure}\label{partition pic}
Here the gray rows form the partition $(8,5,5,4,4,2)$ corresponding to a vector in the $6$-th skew-symmetric power of the vector representation, and the blue columns form the partition $(7,2,1,1)$ corresponding to a vector in the $4$-th skew-symmetric power of the covector representation. Numbers in the boxes are called colors and give a grading. Naturally, $v\wedge v$ (or repetition in the partition) is allowed if and only if $v$ is odd. In our example, we have $\gl_{3,2}$ in the standard parity, which translates to rows ending with colors $3$ and $4$ being odd in the gray area, and columns ending with colors $0$ and $4$ being odd in the blue area. In examples appearing in the $\Es$ modules, the parity of a row ending with a box of color $i$ in the gray area is equal to the parity of a column ending with a box of color $i+1$ in the blue area. In our pictures, the gray and blue areas are not completely symmetric since the diagonal is gray, and the number of the gray rows is never smaller than the number of the blue columns. More symmetrically, one can think that half of each box on the main diagonal is in the gray area, and the other half is in the blue area.  We call such shapes $\s$-partitions. If all parities are even $\s\equiv 1$, then the $\s$-partitions are just usual partitions. See Section \ref{sec: pictures} for more details and pictures.

Each $\s$-partition corresponds to an eigenvector of the Cartan currents $K_i^\pm(z)$ in a Fock module. The creation currents $F_i(z)$ act by adding boxes of color $i$, and the annihilation currents $E_i(z)$ act by removing boxes of colors $i$. All matrix coefficients are explicit.
For example, a matrix coefficient of $F_i(z)$ is always a delta function multiplied by a constant, and the support of the delta function is determined by the coordinates of the box being added.

Altogether, we describe in a similar way a family of $\E_s$-modules which after restriction to $U_q\widehat{\gl}_{m|n}$ are highest weight modules of highest weight  $s_i(r \Lambda_{i-1}-(r+1)\Lambda_i)$, $i\in \hat I$, $r\in\Z$, where $\Lambda_i$ are fundamental weights and $s_i$ are the components of the parity $\s$. In particular, it includes the highest weights $\pm\Lambda_i$. We expect these modules remain irreducible when restricted to $U_q\widehat{\gl}_{m|n}$. We show this is the case for modules corresponding to the highest weights $\Lambda_i$, $i=0,\dots,m$, $(r+1)\Lambda_0-r\Lambda_{-1}$,  $(r+1)\Lambda_m-r\Lambda_{m+1}$, $r\in\Z_{>0}$, in the standard parity  by comparing  the characters. 

\medskip

The construction of MacMahon modules out of Fock spaces is parallel to the even case. We take tensor products of the Fock spaces and appropriately pass to an inductive limit. The basis vectors in a MacMahon module correspond to plane $\s$-partitions. The simplest example is shown in Figure \ref{3d-intro}.

\begin{figure}[H]
\centering
\begin{tikzpicture}

\draw[-] (90pt,13pt)--++(-20:52pt)--++(-90:13pt)--++(-20:39pt)--++(-90:39pt);
\draw[-] (90pt,13pt)++(-140:13pt)--++(-20:52pt)--++(-90:13pt)--++(-20:39pt)--++(-90:39pt);
\draw[-] (90pt,13pt)++(-140:13pt)++(-20:52pt)++(-90:13pt)--++(-90:13pt)--++(-20:39pt);
\draw[-] (90pt,13pt)++(-140:26pt)++(-90:13pt)--++(-20:2.55pt);
\draw[-] (90pt,13pt)++(-140:13pt)++(-20:52pt)++(-90:26pt)--++(-90:13pt)--++(-20:39pt);
\draw[-] (90pt,13pt)++(-140:13pt)++(-20:52pt)++(-90:39pt)++(-20:13pt)--++(-90:12pt)++(-90:1pt)++(-20:2.55pt)--++(-20:23.45pt);
\draw[-] (90pt,13pt)++(-140:26pt)--++(-20:52pt)--++(-90:39pt)--++(-20:26pt)--++(-90:13pt);
\draw[-] (90pt,13pt)++(-140:39pt)++(-20:13pt)--++(-20:13pt)--++(-90:26pt)--++(-20:26pt)--++(-90:13pt)--++(-20:26pt)--++(-90:13pt);
\draw[-] (90pt,13pt)++(-140:26pt)++(-20:26pt)--++(-90:13pt)--++(-20:26pt);
\draw[-] (90pt,13pt)++(-140:26pt)++(-20:26pt)++(-90:13pt)--++(-90:13pt)--++(-20:26pt);
\draw[-] (90pt,13pt)++(-140:39pt)++(-90:13pt)--++(-20:26pt);
\draw[-] (90pt,13pt)++(-140:39pt)++(-90:26pt)++(-20:26pt)--++(-90:13pt)--++(-20:26pt);
\draw[-] (90pt,13pt)++(-140:52pt)++(-90:13pt)--++(-20:26pt)--++(-90:26pt)--++(-20:39pt)--++(-90:13pt);
\draw[-] (90pt,13pt)++(-140:52pt)++(-90:39pt)++(-20:26pt)--++(-90:13pt)--++(-20:15.55pt);
\draw[-] (90pt,13pt)++(-140:65pt)++(-90:13pt)--++(-20:26pt)++(-90:13pt)--++(-90:26pt);
\draw[-] (90pt,13pt)++(-140:65pt)++(-90:13pt)--++(-90:13pt)--++(-20:26pt);
\draw[-] (90pt,13pt)++(-140:65pt)++(-90:39pt)--++(-20:26pt)++(-140:13pt)--++(-90:13pt);
\draw[-] (90pt,13pt)++(-140:65pt)++(-20:52pt)++(-90:39pt)--++(-20:13pt)--++(-90:13pt)--++(160:13pt)--++(90:13pt);
\draw[-] (90pt,13pt)++(-140:91pt)++(-20:26pt)++(-90:39pt)--++(-90:13pt);
\draw[-] (90pt,13pt)++(-140:78pt)++(-90:39pt)--++(-20:26pt);
\draw[-] (90pt,13pt)++(-140:91pt)++(-90:52pt)++(-20:13pt)--++(-20:13pt);

\draw[-] (90pt,13pt)--++(-140:26pt)--++(-90:13pt)--++(-140:39pt)--++(-90:26pt)--++(-140:13pt)--++(-90:13pt);
\draw[-] (90pt,13pt)++(-20:13pt)--++(-140:39pt)--++(-90:13pt)--++(-140:26pt)--++(-90:26pt)--++(-140:26pt)--++(-90:13pt);
\draw[-] (90pt,13pt)++(-20:26pt)--++(-140:39pt)--++(-90:13pt)--++(-140:26pt)--++(-90:13pt);
\draw[-] (90pt,13pt)++(-20:26pt)++(-140:39pt)++(-90:13pt)--++(40:13pt)++(-90:13pt)--++(-140:39pt)++(-90:13pt)++(40:26pt)--++(-140:52pt)--++(160:13pt)++(-20:13pt)++(-90:13pt)--++(40:39pt);
\draw[-] (90pt,13pt)++(-20:39pt)--++(-140:26pt)--++(-90:26pt)--++(-140:13pt)--++(-90:13pt)--++(-140:13pt)--++(-90:13pt);
\draw[-] (90pt,13pt)++(-20:52pt)--++(-140:26pt)++(-90:26pt)--++(-140:13pt)++(-90:13pt)--++(-140:26pt);
\draw[-] (90pt,13pt)++(-20:52pt)++(-90:13pt)--++(-140:26pt)++(-90:13pt)--++(40:13pt)++(-90:13pt)--++(-140:26pt);
\draw[-] (90pt,13pt)++(-20:65pt)++(-90:13pt)--++(-140:13pt)--++(-90:26pt)--++(-140:52pt);
\draw[-] (90pt,13pt)++(-20:78pt)++(-90:13pt)--++(-140:13pt)--++(-90:39pt);
\draw[-] (90pt,13pt)++(-20:91pt)++(-90:13pt)--++(-140:13pt)++(-90:13pt)--++(40:13pt)++(-90:13pt)--++(-140:13pt)++(-90:13pt)--++(40:13pt);
\draw[-] (90pt,13pt)++(-20:78pt)++(-90:39pt)++(-140:26pt)--++(-140:13pt)++(160:13pt)--++(-90:13pt);
\draw[-] (90pt,13pt)++(-20:78pt)++(-90:52pt)++(-140:26pt)--++(-140:13pt)--++(160:13pt)--++(-140:26pt);

\draw[-] (90pt,6.5pt) node[scale=0.75]{$0$}++(-20:13pt) node[scale=0.75]{$1$}++(-20:13pt) node[scale=0.75]{$2$}++(-20:13pt) node[scale=0.75]{$3$}++(-90:13pt)++(-20:13pt) node[scale=0.75]{$4$}++(-20:13pt) node[scale=0.75]{$0$}++(-20:13pt) node[scale=0.75]{$1$};

\draw[-] (90pt,6.5pt)++(-140:13pt) node[scale=0.75]{$4$}++(-20:13pt) node[scale=0.75]{$0$}++(-20:13pt) node[scale=0.75]{$1$}++(-20:13pt) node[scale=0.75]{$2$}++(-90:39pt)++(-20:13pt) node[scale=0.75]{$3$};

\draw[-] (89pt,6.5pt)++(-140:26pt)++(-90:13pt) node[scale=0.75]{$3$};

\draw[-] (90pt,6.5pt)++(-140:26pt)++(-20:13pt) node[scale=0.75]{$4$}++(-20:13pt)++(-90:26pt) node[scale=0.75]{$0$}++(-20:13pt) node[scale=0.75]{$1$}++(-20:13pt)++(-90:13pt) node[scale=0.75]{$2$}++(-20:13pt) node[scale=0.75]{$3$};

\draw[-] (90pt,6.5pt)++(-140:39pt)++(-90:13pt) node[scale=0.75]{$2$}++(-20:13pt) node[scale=0.75]{$3$}++(-20:13pt)++(-90:26pt) node[scale=0.75]{$4$}++(-20:13pt) node[scale=0.75]{$0$}++(-20:13pt) node[scale=0.75]{$1$};

\draw[-] (90pt,6.5pt)++(-140:52pt)++(-90:13pt) node[scale=0.75]{$1$}++(-20:13pt) node[scale=0.75]{$2$}++(-20:39pt)++(-90:26pt) node[scale=0.75]{$0$};

\draw[-] (90pt,6.5pt)++(-140:65pt)++(-90:39pt) node[scale=0.75]{$0$}++(-20:13pt) node[scale=0.75]{$1$}++(-140:13pt) node[scale=0.75]{$0$};

\filldraw[fill=gray, fill opacity=0.3,line width=0.001pt] (90pt,13pt)--++(-20:52pt)--++(-90:13pt)--++(-20:39pt)--++(-90:39pt)--++(-140:13pt)--++(160:23.6pt)--++(-90:12pt)--++(-140:13pt)--++(160:13pt)--++(-140:26pt)--++(90:13pt)--++(160:13pt)--++(40:13pt)--++(160:13pt)--++(40:13pt)--++(90:13pt)--++(160:13pt)--++(40:13pt)--++(90:26pt)--++(160:13pt)--++(40:13pt)--++(160:13pt)--++(40:13pt);
\filldraw[fill=blue, fill opacity=0.35,line width=0.001pt] (90pt,13pt)++(-140:13pt)--++(-140:13pt)--++(-90:13pt)--++(-140:39pt)--++(-90:26pt)--++(-140:13pt)--++(-90:13pt)--++(-20:2.5pt)--++(-90:12pt)--++(-20:13pt)--++(40:39pt)--++(-20:15.4pt)--++(-90:12pt)--++(-20:13pt)--++(90:13pt)--++(160:13pt)--++(40:13pt)--++(160:13pt) --++(40:13pt)--++(90:13pt)--++(160:13pt)--++(40:13pt)--++(90:26pt)--++(160:13pt)--++(40:13pt)--++(160:13pt);

\draw[->] (90pt,13pt)++(-20:91pt)++(-90:52pt)--++(-20:26pt);
\draw[-] (90pt,13pt)++(-20:115pt)++(-90:42pt) node[scale=0.75]{$i$};
\draw[->] (90pt,13pt)++(-140:78pt)++(-90:52pt)--++(-140:26pt);
\draw[-] (90pt,13pt)++(-140:102pt)++(-90:42pt) node[scale=0.75]{$j$};
\draw[->] (90pt,13pt)--++(90:20pt);
\draw[-] (96pt,26pt) node[scale=0.75]{$k$};
\end{tikzpicture}
\caption{ }
\label{3d-intro}
\end{figure}
Each horizontal layer of a plane $\s$-partition is a $\s$-partition corresponding to a vector in a Fock space. The layers are stacked up with the rule:
if the color $i$ can be repeated in the gray (resp. blue) horizontal layer, then it cannot be repeated on the vertical facet looking in the $i$-th (resp. the $j$-th) direction. So, in our example, the only colors which can be repeated vertically are $0,1$ and $2$ in the gray area, and $1,2$ and $3$ in the blue area. 

The rules for the action of $\Es$ are similar: $F_i(z)$ remove boxes of color $i$, $E_i(z)$ add boxes of color $i$, and $K_i^\pm(z)$ do not change the plane $\s$-partition.

We also construct more general MacMahon modules whose basis is parameterized by plane $\s$-partitions with boundaries, see Figure \ref{3d boundary}. The vertical boundary is determined by a so-called ``colorless self-comparable" $\s$-partition and the horizontal boundary by a usual partition. Note that, in the even case, the horizontal boundary can be extended to a pair of arbitrary partitions. 

\medskip

We discuss the characters of the Fock and MacMahon modules. The characters of Fock spaces do not factorize (unlike the even case),  and we give fermionic type formulas. The characters for pure MacMahon modules do factorize and look similar to the MacMahon formula for plane partitions. We give a proof based on the $\gl_{m|n}\otimes \gl_{p|q}$ duality of \cite{S}, \cite{CW}.

We note that the number of $\s$-partitions with $n$ boxes is larger (or equal) than the number of partitions (as every partition is an $\s$-partition), while the number of plane $\s$-partitions with $n$ boxes is smaller (or equal) than the number of plane partitions. 

For special levels, the MacMahon module has a quotient whose basis is given by all plane $\s$-partitions which do not contain a fixed box. It is an interesting question to compute characters of MacMahon modules with boundaries and prohibited boxes. In general, we think that $\s$-partitions and plane $\s$-partitions deserve a thorough combinatorial study.

\medskip

The quantum toroidal superalgebras $\Es$ are isomorphic for different parities $\s$. It would be interesting to study the Fock and MacMahon modules with respect to these isomorphisms.

\medskip

The paper is organized as follows. In Section \ref{Es sec}, we recall generalities about the quantum toroidal superalgebras $\Es$ and their modules. We construct and study vector and covector representations, Fock modules, and MacMahon modules in Sections \ref{vec sec}, \ref{Fock sec}, and \ref{MacMahon sec}, respectively.  Section \ref{sec: pictures} is devoted to pictures and combinatorial descriptions of bases in the constructed modules. We give characters of vector, covector, Fock, and pure MacMahon modules in Section \ref{char sec}. 

\section{Quantum Toroidal Superalgebras}\label{Es sec}
\subsection{Parities}\label{parities sec}
We work over the field of complex numbers $\mathbb{C}$.

A superalgebra is a $\Z_2$-graded algebra $A=A_0\oplus A_1$.  Elements of $A_0$ are called even and elements of $A_1$ odd. We denote the parity of an element $v\in A_i$ by $|v|=i$, $i\in \Z_2$.

Fix $m, n \in \Z_{\geq 0}$, such that $m\neq n$ and $N=m+n\geq 3$.

Consider the Lie superalgebras $\sl_{m|n}$ and $\slh_{m|n}$. The set of Dynkin nodes are $I=\{1,2,\dots,N-1\}$ and $\hat I=\{0,1,\dots,N-1\}$, respectively. The elements of $\hat I$ are always considered modulo $N$.
It is well-known that there are different choices of the root system, which lead to different Cartan matrices and different Dynkin diagrams.  Such choices are parameterized by the set of $N$-tuples of $\pm 1$ with exactly $m$ positive coordinates. Set
$$\Smn=\{(s_1,\dots,s_N)\ |\  s_i\in\{-1,1\},\ \#\{i\,|\,s_i=1\}=m\}.$$
An element $\s=(s_1,\dots,s_N)\in \Smn$ is called a {\it parity sequence}. 
We extend the parity sequences to the sequences $ \s=(s_i)_{i\in{\Z}}$ by demanding periodicity: $s_{i+N}=s_i$ for all $i$.

The parity sequence of the form $\s=(1,\dots,1,-1,\dots,-1)$ is called the {\it standard parity sequence}. 

\medskip

Given a parity sequence $\s \in \Smn$, we have a Cartan matrix $A^{\s}=(A^{\s}_{i,j})_{i,j\in I}$ and an affine Cartan matrix $\hat{A}^{\s}=(A^{\s}_{i,j})_{i,j\in \hat{I}}$, where
\begin{align}{\label{aff cartan}}
	  & A^{\s}_{i,j}=(s_i+s_{i+1})\delta_{i,j}-s_i\delta_{i,j+1}-s_j\delta_{i+1,j} & (i,j \in \hat{I}). 
\end{align}
The parity of a node $i\in \hat I$ is given by $|i|=(1-s_is_{i+1})/2$.

Denote $\sl_{\s}$ the superalgebra corresponding to Cartan matrix $A^{\s}$.
Denote $\slhs$ the superalgebra corresponding to Cartan matrix $\hat{A}^{\s}$. 
The superalgebras $\sl_{\s}$ are all isomorphic to $\sl_{m|n}$ and the superalgebras $\slhs$  to $\slh_{m|n}$. By abuse of notation we often omit the suffix $\s$ if the parity sequence is clear from the context.

Let $P_\s$ be the integral lattice with basis ${\varepsilon}{_i}$, $i\in \hat I$. We have a bilinear form given by
\begin{align*}
	  & \braket{{\varepsilon}{_i}}{{\varepsilon}{_j}}=s_i\delta_{i,j} & (i,j \in \hat{I}). 
\end{align*}

Let $\Delta_\s=\{\alpha_i:={\varepsilon}{_i}-{\varepsilon}{_{i+1}}\,|\,i\in I\}$ be the set of simple roots of $\sl_{\s}$, and let $Q_\s$ be the root lattice of $\sl_{\s}$. Let also ${\delta}{}$ be the null root of $\slhs$.  We have $\braket{{\delta}{}}{{\delta}{}}=\braket{ {\delta}{}}{ {\alpha}{_i}}=0$, $i\in I$. Set $ {\alpha}{_0}= {\delta}{}+ {\varepsilon}{_N}- {\varepsilon}{_1}$. Then, $\hat{\Delta}_\s=\{ {\alpha}{_i}|i\in \hat{I}\}$ is the set of simple roots of $\slhs$. Let $\widehat Q_\s$ be the root lattice of $\slhs$. Note that $\braket{ {\alpha}{_i}}{ {\alpha}{_j}}= A_{i,j}^{\s}$, $i,j\in \hat{I}$, and the parity of a simple root $\alpha_i$ is given by $|\alpha_i|=|i|=(1-s_is_{i+1})/2$.

\medskip
For $\sigma_1,\sigma_2\in\{+,-\},$ we call $\bs \lambda=(\lambda_1,\lambda_2,\dots,\lambda_k)$ a {\it $\s^{\sigma_1}_{\sigma_2}\ $-partition with $k$ parts} if  $\lambda_i\in \Z_{>0}$, $\lambda_i\geq \lambda_{i+1}$, $i=1,\dots,k-1,$ where the equality is allowed if and only if $s_{\sigma_2\lambda_i}=\sigma_1$. Note that, abusing notation, we use $\sigma_i$ as $+,-$ or as $1,-1$. We call the number of parts $k$ the length of the partition and denote it by $\ell(\bs\lambda)$.

\subsection{Definition of \texorpdfstring{$\Es$}{Es}}
The quantum toroidal superalgebra associated with $\gl_{m|n}$ was introduced in \cite{BM1} with standard parity, and in \cite{BM2} for any choice of parity.

Fix $d,q \in \mathbb{C}^\times$ and define 
$$q_1=d\,q^{-1},\quad q_2=q^2,\quad q_3=d^{-1}q^{-1}.$$

Note that $q_1q_2q_3=1$. In this paper, we always assume that $q_1,q_2$ are generic, meaning that $q_1^{n_1}q_2^{n_2}q_3^{n_3}=1$, $n_1, n_2,n_3\in \mathbb{Z}$, if and only if  $n_1=n_2=n_3$. Fix also square roots $d^{1/2}, q^{1/2}\in \mathbb{C}^\times$ so that $(d^{1/2})^2=d,\; (q^{1/2})^2=q$.

Recall the affine Cartan matrix $\hat A^\s=(A_{i,j}^\s)_{i,j,\in \hat I}$, see \eqref{aff cartan}.

We also define the matrix ${M}^{\s}=({M}^{\s}_{i,j})_{i,j\in \hat{I}}$ by $M^{\s}_{i+1,i}=-M^{\s}_{i,i+1}=s_{i+1}$, and $M^{\s}_{i,j}=0$, $i\neq j\pm 1$.

\begin{dfn}{\label{defE}}
	The \textit{quantum toroidal superalgebra associated with $\gl_{m|n}$} and parity sequence $\s$ is the unital associative superalgebra $\Es=\Es(q_1,q_2,q_3)$  generated by $E_{i,r},F_{i,r},H_{i,r'}$, 
	and invertible elements $K_i$, where $i\in \hat{I}$, $r,r'\in \Z$, $r'\neq 0$, subject to the defining relations \eqref{relCK}--\eqref{Serre6} below.
	The parity of the generators is given by $|E_{i,r}|=|F_{i,r}|=|i|=(1-s_is_{i+1})/2$, and all remaining generators have parity $0$. 
\end{dfn}

We use generating series
\begin{align*}
	E_i(z)=\sum_{k\in\Z}E_{i,k}z^{-k}, \quad                                            
	F_i(z)=\sum_{k\in\Z}F_{i,k}z^{-k}, \quad                                            
	K^{\pm}_i(z)=K_i^{\pm1}\exp\Bigl(\pm(q-q^{-1})\sum_{r>0}H_{i,\pm r}z^{\mp r}\Bigr). 
\end{align*}
	
\medskip
Let also $\displaystyle{\delta\left(z\right)=\sum_{n\in \mathbb{Z}} z^n}$ be the formal delta function.
\medskip
	
Then the defining relations are as follows.
\bigskip

\noindent{\bf $K$ relations}
\begin{align}{\label{relCK}}
	  &   & K_iK_j=K_jK_i,                             
	  &   & K_iE_j(z)K_i^{-1}=q^{A_{i,j}^{\s}}E_j(z),  
	  &   & K_iF_j(z)K_i^{-1}=q^{-A_{i,j}^{\s}}F_j(z). 
\end{align}
\medskip
	
\noindent{\bf $K$-$K$, $K$-$E$ and $K$-$F$ relations}
\begin{align}
	  & K^\pm_i(z)K^\pm_j (w) = K^\pm_j(w)K^\pm_i(z),
	\qquad K^-_i(z)K^+_j (w) 
	=
	K^+_j(w)K^-_i (z),
	\label{KK2}\\
	  & (d^{M_{i,j}^{\s}}z-q^{A_{i,j}^{\s}}w)K_i^\pm(z)E_j(w)=(d^{M_{i,j}^{\s}}q^{A_{i,j}^{\s}}z-w)E_j(w)K_i^\pm(z),   
	\label{KE}\\
	  & (d^{M_{i,j}^{\s}}z-q^{-A_{i,j}^{\s}}w)K_i^\pm(z)F_j(w)=(d^{M_{i,j}^{\s}}q^{-A_{i,j}^{\s}}z-w)F_j(w)K_i^\pm(z). 
	\label{KF}
\end{align}
\medskip

\noindent{\bf $E$-$F$ relations}
\begin{align}\label{EF}
	  & [E_i(z),F_j(w)]=\frac{\delta_{i,j}}{q-q^{-1}} 
	\big(\delta\left(\frac{w}{z}\right)K_i^+(w)
	-\delta\left(\frac{z}{w}\right)K_i^-(z)\big).
\end{align}
\medskip
	
\noindent{\bf $E$-$E$ and $F$-$F$ relations}
\begin{align}
	  & [E_i(z),E_j(w)]=0\,, \quad [F_i(z),F_j(w)]=0\, \quad                                                                     & (A_{i,j}^{\s}=0),   \label{EE FF} 
	\\
	  & (d^{M_{i,j}^{\s}}z-q^{A_{i,j}^{\s}}w)E_i(z)E_j(w)=(-1)^{|i||j|}(d^{M_{i,j}^{\s}}q^{A_{i,j}^{\s}}z-w)E_j(w)E_i(z) \quad   & (A_{i,j}^{\s}\neq0),              
	\\
	  & (d^{M_{i,j}^{\s}}z-q^{-A_{i,j}^{\s}}w)F_i(z)F_j(w)=(-1)^{|i||j|}(d^{M_{i,j}^{\s}}q^{-A_{i,j}^{\s}}z-w)F_j(w)F_i(z) \quad & (A_{i,j}^{\s}\neq0).              
\end{align}
\medskip
	
\noindent{\bf Serre relations}
\begin{align}
	  & \Sym_{{z_1,z_2}}\lb{E_i(z_1),\lb{E_i(z_2),E_{i\pm1}(w)}}=0\,\quad & (A^{\s}_{i,i}\neq 0),\label{Serre1} \\ 
	  & \Sym_{{z_1,z_2}}\lb{F_i(z_1),\lb{F_i(z_2),F_{i\pm1}(w)}}=0\,\quad & (A^{\s}_{i,i}\neq 0),               
	\label{Serre2}
\end{align}
	
If $mn\neq 2$,
\begin{align}
	  & \Sym_{{z_1,z_2}}\lb{E_i(z_1),\lb{E_{i+1}(w_1),\lb{E_i(z_2),E_{i-1}(w_2)}}}=0\,\quad & (A^{\s}_{i,i}= 0),\label{Serre3} 
	\\
	  & \Sym_{{z_1,z_2}}\lb{F_i(z_1),\lb{F_{i+1}(w_1),\lb{F_i(z_2),F_{i-1}(w_2)}}}=0\,\quad & (A^{\s}_{i,i}= 0).\label{Serre4} 
\end{align}
	
If $mn=2$,
\begin{align}
	\label{Serre5} & \Sym_{{z_1,z_2}}\Sym_{{w_1,w_2}}\lb{E_{i-1}(z_1),\lb{E_{i+1}(w_1),\lb{E_{{i-1}}(z_2),\lb{E_{i+1}(w_2),E_{i}(y)}}}}=    \\ \notag
	  & =\Sym_{{z_1,z_2}}\Sym_{{w_1,w_2}}\lb{E_{i+1}(w_1),\lb{E_{i-1}(z_1),\lb{E_{{i+1}}(w_2),\lb{E_{i-1}(z_2),E_{i}(y)}}}}\, &   & (A^{\s}_{i,i}\neq  0), \\
	\label{Serre6} & \Sym_{{z_1,z_2}}\Sym_{{w_1,w_2}}\lb{F_{i-1}(z_1),\lb{F_{i+1}(w_1),\lb{F_{{i-1}}(z_2),\lb{F_{i+1}(w_2),F_{i}(y)}}}}=    \\ \notag
	  & =\Sym_{{z_1,z_2}}\Sym_{{w_1,w_2}}\lb{F_{i+1}(w_1),\lb{F_{i-1}(z_1),\lb{F_{{i+1}}(w_2),\lb{F_{i-1}(z_2),F_{i}(y)}}}}\, &   & (A^{\s}_{i,i}\neq  0). 
\end{align}

Here, the bracket $[X,Y]$ is defined by the standard supersymmetric convention: for $X,Y$ homogeneous with respect to the parity it is given by
\begin{align*}
    [X,Y]=XY-(-1)^{|X||Y|} YX.
\end{align*} 
Additionally, the $q$-deformed bracket $\lb{\cdot,\cdot}$ is defined as follows.
Consider the root space decomposition of $\Es$
\begin{align*}
    \Es=\bigoplus_{\alpha \in \widehat Q_\s} (\Es)_{\alpha},
\end{align*}
where $(\Es)_{\alpha}:=\{X\in \Es\, |\, K_iXK^{-1}_i=q^{\braket{\alpha_i}{\alpha}},\, i\in \hat I\}$. Then, if $X\in (\Es)_{\alpha}$ and $Y\in (\Es)_{\beta}$, define 
\begin{align*}
    \lb{X,Y}=XY-(-1)^{|X||Y|} {q^{\braket{\alpha}{\beta}}}YX.
\end{align*}

\medskip
	
We have a central element $K$ given by
$$K:=K_0 K_1\cdots K_{N-1}.$$

The subalgebra  of $\Es$ generated by $E_i(z), F_i(z), K^\pm_i(z)$, $i\in I$, is isomorphic to $\Us$ at level zero (that is, with central element $c=1$, see for example Section 2.2 in \cite{BM2} for the description of $\slhs$). We denote this subalgebra by $U^{ver}_q\slhs$ and call it the {\it vertical quantum affine $\mathfrak{sl}_{m|n}$}.

The subalgebra  of $\Es$ generated by $E_{i,0}, F_{i,0}, K^{\pm 1}_i$, $i\in \hat I$, is isomorphic to $\Us$. We denote this subalgebra $U^{hor}_q\slhs$ and call it the {\it horizontal quantum affine $\mathfrak{sl}_{m|n}$}.
The  horizontal algebra contains the central element $K$.

\medskip

The general quantum toroidal superalgebra of \cite{BM2} has two central elements, $K$ and $C$. In this paper, we set $C=1$ since this happens in all representations we will be considering. In \cite{BM1}, we studied representations of $\Es$ where $K=1$ and $C$ was not one. Thus, one can think that our generators of $\Es$  in this paper correspond to images of generators of \cite{BM1} under the Miki automorphism, which exchanges $K$ and $C$, see Theorem 5.9 in \cite{BM2}. In particular, the vertical and horizontal subalgebras in this paper correspond to the horizontal and vertical subalgebras, respectively, in \cite{BM1}.

\medskip
It is proved in \cite{BM2} that the superalgebras $\Es$ are isomorphic for all parities $\s \in \Smn$ .

\medskip

		The superalgebra $\Es$ has a {\it topological comultiplication}\footnote{Note that our convention for the coproduct is opposite to that of \cite{BM2}.} given on generators by
		\begin{align}
		  & \Delta (E_i(z))=E_i(z)\otimes K^-_i(z) + 1\otimes E_i(z),      \notag                           \\
		  & \Delta (F_i(z))=F_i(z)\otimes 1 +  K^+_i(z)\otimes F_i(z), \label{coproduct}                                \\
		  & \Delta (K^\pm_i(z))=K^\pm_i(z)\otimes K^\pm_i(z). \notag
	\end{align}

	The corresponding {\it topological antipode} is given by
	\begin{align*}
	    S(E_i(z))=-E_i(z)\left(K^-_i(z)\right)^{-1},                                   \qquad  S(F_i(z))=-\left(K^+_i(z)\right)^{-1}F_i(z),                                   \qquad S(K^\pm_i(z))=\left(K^\pm _i(z) \right)^{-1}.
	\end{align*}
The map $\Delta$ is extended to an algebra homomorphism, and the map $S$ to a superalgebra anti-homomorphism, $S(xy)=(-1)^{|x||y|}S(y)S(x)$.

 Note that the tensor product multiplication is defined for homogeneous elements $x_1, x_2, y_1, y_2 \in\Es $ by
 \begin{align*}
     (x_1\otimes y_1)(x_2\otimes y_2)=(-1)^{|y_1||x_2|}x_1x_2\otimes y_1y_2
 \end{align*}
  and extended to the whole algebra by linearity.  
 
 Given $\Es$-modules $V_1,V_2$, the coproduct \eqref{coproduct} makes  $V_1\otimes V_2$ into $\Es$-module, provided it is well defined. Note the same sign rule applies:
\begin{align}\label{sign tensor}
     (x_1\otimes y_1)(v_1\otimes v_2)=(-1)^{|y_1||v_1|}x_1v_1\otimes y_1v_2,
 \end{align}
	for homogeneous elements $v_1\in V_1$, $v_2\in V_2$, $x_1,x_2\in\Es$.
	
	The vertical subalgebra $U^{ver}_q\widehat{\mathfrak{sl}}_{\s}$ 
	is a Hopf subalgebra of $\Es$.

\medskip

For a parity $\s$ define the parity $\s'$ by the rule $$s_i'=-s_{-i+1}.$$ We have $(\s')'=\s$.

\begin{lem}\label{lem tau} The quantum toroidal superalgebra $\Es(q_1,q_2,q_3)$ associated with $\gl_{m|n}$ and parity sequence $\s$ is isomorphic to the quantum toroidal superalgebra $\E_{\s '}(q_3^{-1},q_2^{-1},q_1^{-1})$ associated with $\gl_{n|m}$ and parity sequence $\s'$. More precisely, we have an isomorphism of toroidal superalgebras
\begin{align*}
    \tau_{\s}:\  & \Es(q_1,q_2,q_3) \to \E_{\s '}(q_3^{-1},q_2^{-1},q_1^{-1}), \\
    & E_i(z)\mapsto E_{-i}(z), \quad  F_i(z)\mapsto -F_{-i}(z)
 ,\quad  K_i^\pm(z)\mapsto K^\pm_{-i}(z).    
\end{align*}
\end{lem}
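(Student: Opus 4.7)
The plan is to check that the prescribed map $\tau_\s$ sends each defining relation of $\E_{\s'}(q_3^{-1},q_2^{-1},q_1^{-1})$ into a true relation of $\Es(q_1,q_2,q_3)$. Once this is done, the inverse is obtained by applying the analogous map $\tau_{\s'}$, and the identity $(\s')'=\s$ together with $((q_1)^{-1})^{-1}=q_1$ etc.\ give $\tau_{\s'}\circ\tau_\s=\mathrm{id}$, so $\tau_\s$ is an isomorphism of superalgebras.

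First I would work out the translation dictionary. From $s'_i=-s_{-i+1}$, I would verify the three identities
\begin{align*}
|i|_{\s'}=|-i|_{\s},\qquad A^{\s'}_{i,j}=-A^{\s}_{-i,-j},\qquad M^{\s'}_{i,j}=M^{\s}_{-i,-j},
\end{align*}
the first two directly from the definitions and \eqref{aff cartan}, the last by checking the two off-diagonal cases $j=i\pm 1$. I would also observe that writing $q'_a$ for the parameters of the target, the assignment $(q'_1,q'_2,q'_3)=(q_3^{-1},q_2^{-1},q_1^{-1})$ is equivalent to $q'=q^{-1}$ and $d'=d$, so that
\begin{align*}
d^{M^{\s'}_{i,j}}=d^{M^{\s}_{-i,-j}},\qquad (q')^{\pm A^{\s'}_{i,j}}=q^{\pm A^{\s}_{-i,-j}}.
\end{align*}

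Given this dictionary, each relation is mapped to the corresponding relation of $\Es$ at indices $-i,-j$. The $K$-relations \eqref{relCK}, the $K$-$K$ relations \eqref{KK2}, and the $K$-$E$, $K$-$F$ relations \eqref{KE}--\eqref{KF} follow immediately, the sign in $\tau_\s(F_i(z))=-F_{-i}(z)$ canceling between the two sides of \eqref{KF}. For the $E$-$E$ and $F$-$F$ relations, the parities match via $|i|_{\s'}|j|_{\s'}=|-i|_{\s}|-j|_{\s}$, and for $F$-$F$ the two minus signs multiply to $+1$ on each side. The $E$-$F$ relation \eqref{EF} requires the only nontrivial use of the sign: the factor $1/(q'-q'^{-1})=-1/(q-q^{-1})$ flips sign, and this is absorbed by the overall minus in $\tau_\s(F_j(w))$, while $\delta_{i,j}=\delta_{-i,-j}$.

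The main obstacle, and the part warranting real care, is the Serre relations \eqref{Serre1}--\eqref{Serre6}, because they involve nested $q$-brackets depending on the bilinear form $\langle\cdot,\cdot\rangle$. Here I would observe that for a simple root, $\langle\alpha_i,\alpha_j\rangle_{\s'}=A^{\s'}_{i,j}=-A^{\s}_{-i,-j}=-\langle\alpha_{-i},\alpha_{-j}\rangle_{\s}$, and more generally under the $\tau_\s$-image the $\widehat Q_{\s'}$-weight $\alpha$ of a generator is sent to the $\widehat Q_{\s}$-weight $\alpha^{\vee}$ obtained by $\alpha_i\mapsto\alpha_{-i}$, with $\langle\alpha,\beta\rangle_{\s'}=-\langle\alpha^{\vee},\beta^{\vee}\rangle_{\s}$. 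Combined with $q'=q^{-1}$, this yields
\begin{align*}
(q')^{\langle\alpha,\beta\rangle_{\s'}}=q^{\langle\alpha^{\vee},\beta^{\vee}\rangle_{\s}},
\end{align*}
so the $q'$-bracket in $\E_{\s'}$ pulls back precisely to the $q$-bracket in $\Es$. Therefore each Serre expression \eqref{Serre1}--\eqref{Serre6} at nodes $i,i\pm 1$ maps to the same expression at nodes $-i,-i\mp 1$, which vanishes by the corresponding Serre relation in $\Es$ (the overall sign coming from the $F$'s being a global $\pm 1$ that does not affect vanishing). This completes the verification, and the observation above about $\tau_{\s'}\circ\tau_\s=\mathrm{id}$ then yields the isomorphism claim.
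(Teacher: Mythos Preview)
Your verification is correct and is precisely the ``direct computation'' the paper invokes; the dictionary $|i|_{\s'}=|-i|_{\s}$, $A^{\s'}_{i,j}=-A^{\s}_{-i,-j}$, $M^{\s'}_{i,j}=M^{\s}_{-i,-j}$, $q'=q^{-1}$, $d'=d$ is exactly what makes each relation correspond, and your handling of the sign in \eqref{EF} and of the $q$-bracket in the Serre relations is right. One wording slip: to show $\tau_\s:\Es\to\E_{\s'}$ is well-defined you must check that the defining relations of $\Es$ (not of $\E_{\s'}$) map to identities in $\E_{\s'}$; your computations actually do this, only the opening sentence has domain and codomain interchanged.
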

\begin{proof}
The lemma is proved by a direct computation.
\end{proof}
For the standard parity, the map $\tau_\s$ was given in \cite{BM1}.

\subsection{Generalities on the \texorpdfstring{$\Es$}{Es}-modules} 
Let $\Es^+$, $\Es^-$, $\Es^0$ be the subalgebras of $\Es$ generated by the coefficients of the series $E_i(z)$, $F_i(z)$, and $K^\pm_i(z)$, $i\in \hat I$, respectively.	

We expect to have the {\it  triangular decomposition} $\Es=\Es^-\otimes  \Es^0 \otimes \Es^+$\footnote{Some technical facts about $\E_\s$ are not written in the literature yet: e.g. $\E_s$ is a expected to be a flat deformation of the extension of $\gl_{m|n}[t_1,t_2]$ by a two dimensional center; $\E_\s$ is expected to be a Drinfeld double and, in particular, to have a universal $R$-matrix; etc. The triangular decomposition is one such fact. We do not use the existence of the triangular decomposition in this paper.}.

\medskip

The superalgebra $\Es$ is $\Z^{N}$-{\it graded} with 
\begin{align*}
    \deg E_i(z)=-1_i, \qquad \deg F_i(z)=1_i, \qquad \deg K^{\pm} (z)=0,
\end{align*}
where $1_i=(0,\dots,0,1,0,\dots,0)\in\Z^N$ with the $1$ in the $i$th position. 

\medskip
 Let  $V$ be a graded $\E_s$-module. We use the same notation for the degrees of vectors as in $\Es$. Then
 $$
 V=\mathop{\bigoplus}_{\bs l\in\Z^N} V_{\bs l}, \qquad V_{\bs l}=\{v\in V,\ \deg(v)=\bs l\}.
 $$
A graded module $V$ is called {\it quasi-finite} if all graded components $V_{\bs l}$ are finite-dimensional.

For a quasi-finite $\Es$-module  $V$, we call the sum
\begin{align}\label{character def}
    \chi(V;z_0,\dots,z_{N-1})=\sum_{\bs l=(l_0,\dots,l_{N-1})\in\Z^N} (\dim V_{\bs l})\,\prod_{i=0}^{N-1}{z_i^{l_i}}.
\end{align}
the {\it character of $V$}.

We say a vector $v\in V$ has {\it finite weight $\Lambda=\sum_{i\in \hat I}k_i\Lambda_i$,} if $K_{i}v_i=q^{k_i}v$ for all $i\in \hat I$.
 
 A vector $v\in V$ is called a {\it weight vector with weight $\phi^\pm$}, $\phi^\pm=(\phi_0^\pm(z),\dots,\phi^\pm_{N-1}(z))$, $\phi_i^\pm\in\C[z^{\mp 1}]$ if
 \begin{align*}
     K_i^\pm(z)v=\phi_i^\pm(z)v \qquad (i\in \hat I).
 \end{align*}
 Note that we have $K_iv=\phi_i^+(\infty)v=1/\phi_i^-(0)v$.
 
If the space $V_{\phi^\pm}=\{ v\in V,\  K_i^\pm(z)v=\phi_i^\pm(z)v,\ i\in \hat I\}$ is non-zero, then it is called the {\it weight space of $V$ weight $\phi^\pm$}.
   
An $\Es$-module $V$ is called {\it weighted} if $V$ has a basis consisting of weight vectors. It is important to keep in mind that any submodule of a weighted module is always weighted. Every weighted module is graded.

An $\Es$-module $V$ is called {\it tame} if $V$ is weighted and all weight spaces are 1-dimensional.

A vector $v\in V$ is called {\it singular} if $\Es^+v=0$.

An $\Es$-module is called {\it highest weight module of highest weight $\phi^\pm$} if it is generated by a singular vector of weight $\phi^\pm$. Clearly, irreducible sub-quotients of well-defined tensor products of highest weight modules are also highest weight modules.

The following proposition is standard.
\begin{prop}
For any $\phi^\pm(z)\in\C[[z^{\mp1}]]$ satisfying $\phi^+_i(\infty)\phi^-_i(0)=1$ for all $i\in \hat I$, there exists a unique irreducible highest weight $\E_\s$-module $L_{\phi^\pm}$ with highest weight $\phi^\pm(z)$.

If for each $i\in \hat I$ the series $\phi_i^\pm(z)$ are expansions of a rational function  $\phi_i(z)$ such that $\phi_i(\infty)\phi_i(0)=1$, then the irreducible highest weight module $L_{\phi^\pm}$ is quasi-finite. \qed
\end{prop}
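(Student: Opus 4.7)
The proof follows the standard Verma construction, adapted from the non-super case treated in \cite{FJMM}. First, using the triangular decomposition $\Es = \Es^- \otimes \Es^0 \otimes \Es^+$, I would define the Verma module $M_{\phi^\pm} := \Es \otimes_{\Es^0 \Es^+} \C_{\phi^\pm}$, where $\C_{\phi^\pm}$ is the one-dimensional $(\Es^0 \Es^+)$-module on which $\Es^+$ acts by zero, each $H_{i,r}$ acts by the scalar determined by the series expansion of $\phi^\pm_i(z)$, and $K_i$ acts by $\phi^+_i(\infty) = 1/\phi^-_i(0)$ (the compatibility $\phi_i^+(\infty)\phi_i^-(0)=1$ guarantees this is consistent). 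As an $\Es^-$-module, $M_{\phi^\pm} \cong \Es^-$, hence nonzero. The $\Z^N$-grading puts the canonical cyclic vector $v_{\phi^\pm}$ in degree $\bs 0$, so any proper submodule is contained in $\bigoplus_{\bs l \neq \bs 0} M_{\phi^\pm, \bs l}$, whence the sum of all proper submodules is again proper. Quotienting by this unique maximal proper submodule yields the desired irreducible $L_{\phi^\pm}$. Uniqueness follows because any irreducible highest weight module with highest weight $\phi^\pm$ is a cyclic quotient of $M_{\phi^\pm}$ via the map sending $v_{\phi^\pm}$ to its singular vector, hence isomorphic to $L_{\phi^\pm}$.

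For the quasi-finiteness claim, the strategy is to exploit the rationality of $\phi_i(z)$ together with the $EF$ relation \eqref{EF}. Evaluated on the singular vector $v = v_{\phi^\pm}$, this relation gives
\begin{align*}
E_i(z) F_j(w) v = \frac{\delta_{i,j}}{q-q^{-1}} \bigl(\delta(w/z)\phi^+_i(w) - \delta(z/w)\phi^-_i(z)\bigr) v.
\end{align*}
If $\phi_i(z) = P_i(z)/Q_i(z)$ with $P_i, Q_i$ polynomials, then multiplying this identity by $Q_i(w)$ shows that $Q_i(w) F_i(w) v$ is annihilated by all modes $E_{j,k}$ outside a bounded range. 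This cuts down the independent $F_i$-modes acting nontrivially on $v$ modulo the maximal proper submodule to a finite number. To extend this constraint to arbitrary monomials $F_{i_1,r_1}\cdots F_{i_k,r_k} v$, I would proceed by induction on $k$, using the $FF$ and Serre relations \eqref{EE FF}--\eqref{Serre6} to reorder generators, and the $EF$ relation to move $E$'s across accumulated $F$'s; the resulting identities confine each fixed $\Z^N$-graded component of $L_{\phi^\pm}$ to the image of a finite-dimensional subspace of $\Es^- v$.

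The main obstacle is organizing this induction cleanly in the super setting, since the $FF$ and Serre relations carry nontrivial signs depending on $|i||j|$, and the super-commutator structure in \eqref{EF} reshuffles the bookkeeping of which modes survive. I would handle this by introducing a filtration of $\Es^-$ by total $F$-degree and inducting on the associated graded pieces, showing at each stage that the annihilation identities derived from the $EF$ relation propagate through the super-commutator structure. With care, this parallels the even case and yields the claimed quasi-finiteness.
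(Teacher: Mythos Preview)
The paper does not actually prove this proposition: it is stated as ``standard'' and closed with a bare \qed. Your sketch of the Verma-module construction for existence and uniqueness is correct and is indeed the standard argument; there is nothing to compare against.

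For quasi-finiteness your strategy is also the standard one, but one phrase is slightly off. Multiplying by $Q_i(w)$ does not show that $Q_i(w)F_i(w)v$ is annihilated by $E_{j,k}$ outside a bounded range; rather, taking modes in the identity $E_{i,r}F_{i,s}v = \tfrac{1}{q-q^{-1}}(\phi^+_{i,r+s}-\phi^-_{i,r+s})v$ and using that the right-hand side satisfies the linear recurrence with characteristic polynomial $Q_i$ produces, for each $i$, a finite linear combination $\sum_s c_s F_{i,s}v$ annihilated by \emph{all} $E_{j,r}$. In the irreducible quotient any such singular vector of nonzero degree vanishes, so the $F_{i,s}v$ span a finite-dimensional space. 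The inductive step you outline---propagating these relations through the quadratic $FF$ relations and Serre relations to bound each graded piece---is the right idea; the super signs are harmless bookkeeping since the relevant relations are homogeneous for the $\Z_2$-grading. With that correction your argument goes through.
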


An $\Es$-module $V$ has {\it level} $k\in \C^\times$ if the central element $K$ acts by the constant $q^k$. A well-defined tensor product of modules of levels $k_1$ and $k_2$ is a module of level $k_1+k_2$.

\section{Vector and covector representations}\label{vec sec}
\subsection{Definition of the vector and covector representations}
Fix $u\in \Cx$. We call $u$  the {\it evaluation parameter}. 

Given a parity $\s$, define a map $\ \bar{}\ :\ \Z \to \Z$ by
\begin{align*}
    \bar 0=0, \qquad  \overline {j+1}=\bar j+s_{j+1} \qquad  (j\in \Z). 
\end{align*}
We have the properties
\begin{align}
& \overline{ j}=\displaystyle{\sum_{i=1}^{j}}s_{ i} \qquad  (j\in \Z_{\geq 0}), \label{overline}\\
& \overline{j+m+n}=\overline{j}+m-n  \qquad  (j\in \Z).\label{periodic}
\end{align}
Since $m\neq n$, the map  $\ \bar{}\ :\ \Z \to \Z$ is surjective.

For $k\in\Z$,  let $$\psi_k(z)=\dfrac{q^k-q^{-k}z}{1-z}.$$ 
We have trivial but useful properties
$$
\psi_k(q_2^kz)\psi_k(z^{-1})=1, \qquad \psi_k(z^{-1})=\psi_{-k}(z), \qquad  \psi_l(q_2^{-k}z)\psi_k(z)=\psi_{k+l}(z).
$$
We set $\psi_1(z)=\psi(z)$. Then $\psi(q^2)=0$, $\psi(1)$ is not defined and $\psi(z)\psi(w)=1$ if $zw=q_2$.

\medskip

Let $V(u)$ be the vector superspace with basis $[u]_j$, $j\in \mathbb{Z}$, where the parity is given by $\left|[u]_{j}\right|=(1-s_{j+1})/2$.

\begin{lem}
The superspace $V(u)$ has an irreducible tame $\Es$-module structure of level zero given by
\begin{align*}
    &E_i(z)[u]_{j}=\begin{cases}
                        \delta \left(q_1^{-\bar{j}}u/z\right)[u]_{j-1},     & i\equiv j \pmod{m+n}; \\
                        0,  & \text{otherwise};
                    \end{cases}\\
    &F_i(z)[u]_{j-1}=\begin{cases}
                    s_{j}\, \delta \left(q_1^{-\bar{j}}u/z\right)[u]_{j},     & i\equiv j \pmod{m+n}; \\
                    0,  & \text{otherwise};
                \end{cases}\\
    &K^\pm_i(z)[u]_j=\begin{cases}
                         \psi_{-s_{j+1}} \left(q_1^{-\overline{j}}u/z\right)[u]_{j},     & i\equiv j \pmod{m+n}; \\
                        \psi_{s_{j+1}} \left(q_1^{-(\overline{j+1})}u/z\right)[u]_{j},     & i\equiv j+1 \pmod{m+n}; \\
                        [u]_{j},  & \text{otherwise}.
                    \end{cases}
\end{align*}\qed
\end{lem}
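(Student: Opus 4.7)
The plan is to verify directly that the given formulas extend to a well-defined action satisfying the defining relations \eqref{relCK}--\eqref{Serre6}, and then to deduce the remaining structural properties as easy corollaries.

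First I would establish the three ``easy'' claims. Each $[u]_j$ is a simultaneous eigenvector of every $K_i^\pm(z)$: the eigenvalue is $1$ unless $i\equiv j$ or $i\equiv j+1\pmod{N}$. Specializing $z\to\infty$, one reads $K_i[u]_j=q^{-s_{j+1}}[u]_j$ when $i\equiv j$ and $K_i[u]_j=q^{s_{j+1}}[u]_j$ when $i\equiv j+1$, so the telescoping product $K=K_0K_1\cdots K_{N-1}$ acts on every $[u]_j$ by $q^{-s_{j+1}}\cdot q^{s_{j+1}}=1$, proving level zero. The nontrivial eigenvalues of $K_i^\pm(z)$ are rational functions with poles at $z=q_1^{-\bar j}u$ or $z=q_1^{-\overline{j+1}}u$, so generic choice of $q_1$ forces distinct $[u]_j$ to carry distinct weight functions, giving tameness. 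Irreducibility is then immediate: any nonzero submodule is weighted by tameness, hence contains some $[u]_j$, and the supported delta-function actions of $E_j(z)$ and $F_{j+1}(z)$ send $[u]_j$ to nonzero multiples of $[u]_{j-1}$ and $[u]_{j+1}$, generating all of $V(u)$.

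The bulk of the work is verifying the relations. For \eqref{relCK}, \eqref{KK2}, \eqref{KE}, \eqref{KF} each check reduces, after pairing with a basis vector $[u]_j$, to an identity between two expressions supported on the same delta function at $z=q_1^{-\bar\ell}u$ for a specific $\ell$ determined by the case split $j\equiv i,\, i\pm 1\pmod N$; matching scalar factors is then a short calculation using the identities $\psi_k(q_2^k z)\psi_k(z^{-1})=1$ and $\psi_l(q_2^{-k}z)\psi_k(z)=\psi_{k+l}(z)$ together with the explicit formulas for $M^{\s}_{i,j}$ and $A^{\s}_{i,j}$. The $E$-$F$ relation \eqref{EF} collapses to $i=j$, and then on each $[u]_\ell$ only the subcases $\ell\equiv i$ or $\ell\equiv i+1\pmod N$ contribute; in each one, the commutator produces a single delta term whose coefficient, after using $s_\ell^2=1$, is exactly the right-hand side of \eqref{EF}.

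The relations \eqref{EE FF} and the Serre relations \eqref{Serre1}--\eqref{Serre6} are almost all trivial on basis vectors, since $E_i(z)[u]_j$ and $F_i(z)[u]_j$ vanish unless $i$ equals $j$ or $j-1\pmod N$ respectively; most nested compositions therefore vanish termwise, and the remaining cases reduce to finite identities among products of $\psi$ and $\delta$ functions. The main obstacle I anticipate is the higher Serre relations \eqref{Serre5}--\eqref{Serre6} in the $mn=2$ case, where the fourfold nested $q$-brackets and the double symmetrization demand careful bookkeeping of the signs from $|i|=(1-s_is_{i+1})/2$ and of the $q$-powers from the root pairings $\braket{\alpha}{\beta}$ entering each $\llbracket\cdot,\cdot\rrbracket$. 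Nevertheless, on any fixed $[u]_j$ only finitely many index configurations give a nonzero contribution at each depth of nesting, so the verification reduces to a finite identity that is expected to hold for generic $q_1,q_2$ by direct computation.
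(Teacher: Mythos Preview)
Your proposal is correct and follows the same approach the paper implicitly intends: the lemma is stated with a bare \qed, i.e.\ the proof is left as a direct verification of the relations \eqref{relCK}--\eqref{Serre6} on basis vectors together with the immediate consequences (level zero, tameness, irreducibility). Your outline covers precisely these checks, and the bookkeeping you anticipate for the higher Serre relations is indeed the only place requiring care.
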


Let $W(u)$ be the vector superspace with basis $[u]^j$, $j\in \mathbb{Z}$, where the parity is given by $|[u]^{j}|=(1-s_j)/2$.

\begin{lem}
The superspace $W(u)$ has an irreducible tame $\Es$-module structure of level zero given by
\begin{align*}
    &E_i(z)[u]^{j}=\begin{cases}
      \delta \left(q_3^{\bar{j}}u/z\right)[u]^{j+1},     & i\equiv j \pmod{m+n}; \\
       0,  & \text{otherwise};
    \end{cases}\\
    &F_i(z)[u]^{j+1}=\begin{cases}
     s_{j+1}\, \delta \left(q_3^{\bar{j}}u/z\right)[u]^{j},     & i\equiv j \pmod{m+n}; \\
       0,  & \text{otherwise};
    \end{cases}\\
    &K^\pm_i(z)[u]^j=\begin{cases}
     \psi_{-s_j} \left(q_3^{\overline{j}}u/z\right)[u]^{j},     & i\equiv j \pmod{m+n}; \\
     \psi_{s_{j}} \left(q_3^{(\overline{j-1})}u/z\right)[u]^{j},     & i\equiv j-1 \pmod{m+n}; \\
       [u]^{j},  & \text{otherwise}.
    \end{cases}
\end{align*}\qed
\end{lem}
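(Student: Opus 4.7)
The plan is to verify the defining relations \eqref{relCK}--\eqref{Serre6} directly on the proposed action. On the basis $\{[u]^j : j \in \Z\}$, each $E_i(z)$ and $F_i(z)$ has at most one nonzero matrix entry, supported on a single delta function, and $K_i^\pm(z)$ acts diagonally by one of the rational functions $\psi_{\pm s_j}$ or $\psi_{\pm s_{j+1}}$. Hence every relation reduces to a short case check among these explicit scalars, exactly parallel to the preceding lemma for $V(u)$, using the $\psi$-identities recorded just before that lemma.

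First I would dispatch the $KK$ relations, which are immediate from diagonality, and then the $KE$ and $KF$ relations: applied to $[u]^k$ they split into cases according to whether $i$ and $j$ coincide, are adjacent, or are further apart modulo $N$. In the nontrivial cases, the prefactor $(d^{M_{i,j}^\s}z - q^{\pm A_{i,j}^\s}w)$ vanishes precisely at the support of the delta function carried by $E_j(w)$ or $F_j(w)$, matching the two sides via $\psi_k(q_2^k z)\psi_k(z^{-1})=1$ and $\psi_l(q_2^{-k}z)\psi_k(z)=\psi_{k+l}(z)$. The $EF$ relation is the most delicate step: both sides vanish unless $i \equiv j$, and in that case the sign $s_{j+1}$ in the $F$-action combines with the two distinct $\psi$'s appearing in $K_i^\pm(z)[u]^j$ to produce exactly the residues $\delta(w/z)K_i^+(w) - \delta(z/w)K_i^-(z)$ demanded by \eqref{EF}.

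The $EE$ and $FF$ relations with $A_{i,j}^\s=0$ hold because the two delta supports are disjoint, while for $A_{i,j}^\s \neq 0$ the linear prefactor vanishes on the common support. The Serre relations \eqref{Serre1}--\eqref{Serre6} follow by the usual observation that after the $\Sym_{z_1,z_2}$ symmetrization the delta-function coefficient becomes antisymmetric in $(z_1, z_2)$ and hence vanishes. The main potential obstacle is bookkeeping parities: the signs $(-1)^{|i||j|}$ in the $EE$ and $FF$ relations must cancel against the factor $s_{j+1}$ in the $F$-action and the alternating parity $|[u]^j|=(1-s_j)/2$ across the basis. As a cross-check, one can derive the entire lemma from the lemma for $V(u)$ by pulling back along the isomorphism $\tau_\s$ of Lemma \ref{lem tau} and relabeling the basis $[u]_j \leftrightarrow [u]^{-j+1}$; the periodicity \eqref{periodic} of $\overline{\ \cdot\ }$ together with the sign in $F_i(z) \mapsto -F_{-i}(z)$ accounts both for the switch from $q_1$ to $q_3$ in the arguments and for the appearance of $s_{j+1}$ in the action on $W(u)$. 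Finally, irreducibility follows since iterated application of $E_i(z)$ and $F_i(z)$ connects every pair of basis vectors, and tameness is immediate from the fact that each $[u]^j$ is a joint $K_i^\pm(z)$-eigenvector with a distinct system of eigenvalues.
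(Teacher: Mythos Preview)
Your proposal is correct and matches the paper's approach: the paper omits the proof entirely (the lemma ends with \qed), treating it as a direct verification of the defining relations, which is exactly what you outline. Your cross-check via $\tau_\s$ is also the content of the paper's subsequent Lemma~\ref{lem:V-W}, though the correct relabeling there is $[u]^j \mapsto [u]'_{-j}$ rather than $[u]_j \leftrightarrow [u]^{-j+1}$; the off-by-one does not affect the main argument.
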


We call $V(u)$ the {\it vector representation}\footnote{In \cite{FJMM} the counterparts of the vector and covector representations are denoted by $V^{(k)}(u)$, $W^{(k)}(u)$. In particular, there is an extra parameter $k\in \hat{I}$. The modules $V^{(k)}(u)$ are all isomorphic to each other up to an appropriate shift of the evaluation parameter $u$. The same is true for $W^{(k)}(u)$. In this paper we do not use this extra parameter $k$.} of $\Es$ and $W(u)$ the {\it covector representation} of $\Es$.

The vector and covector representations $V(u)$ and $W(u)$ are periodic. Namely, we have the following simple lemma.

\begin{lem}\label{lem: periodic}
There exist isomorphisms of $\Es$-modules:
\begin{align*}
    &V(uq_1^{m-n})\to V(u), \qquad [uq_1^{m-n}]_i\mapsto [u]_{i-m-n},  \\
    &W(uq_3^{m-n})\to W(u), \qquad [uq_3^{m-n}]^i\mapsto [u]^{i+m+n}.
\end{align*}
\end{lem}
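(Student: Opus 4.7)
The plan is a direct check on basis vectors, using the explicit action formulas from the two preceding lemmas together with the periodicity identity \eqref{periodic}. Both maps are specified on a basis, and the representations are defined basis-theoretically, so it suffices to verify that each generator of $\Es$ intertwines through the proposed map. No new ingredient beyond $\overline{j+m+n}=\bar j+(m-n)$ and the $(m+n)$-periodicity of $\s$ should be needed.

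For $V(u)$, let $u'=uq_1^{m-n}$ and $\phi\colon V(u')\to V(u)$ send $[u']_i\mapsto[u]_{i-m-n}$. First I would record parity preservation: $|[u']_i|=(1-s_{i+1})/2$ and $|[u]_{i-m-n}|=(1-s_{i-m-n+1})/2$ agree because $\s$ is $(m+n)$-periodic. Next, the selection rule $i\equiv j\pmod{m+n}$ governing when $E_i(z)[u']_j$ is nonzero is equivalent to $i\equiv j-m-n\pmod{m+n}$, so the zero/nonzero dichotomy matches on both sides. In the nonzero case one has
\[
  \phi\bigl(E_i(z)[u']_j\bigr)=\delta\!\left(q_1^{-\bar j+(m-n)}\,u/z\right)[u]_{j-m-n-1},
\]
while on the other side
\[
  E_i(z)\,\phi([u']_j)=E_i(z)[u]_{j-m-n}=\delta\!\left(q_1^{-\overline{j-m-n}}\,u/z\right)[u]_{j-m-n-1},
\]
and these coincide exactly because $\overline{j-m-n}=\bar j-(m-n)$ by \eqref{periodic}. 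The verifications for $F_i(z)$ (which further needs periodicity of $s_j$ to match the scalar $s_j$) and for $K_i^\pm(z)$ are formally identical; for $K_i^\pm(z)$ one treats the two cases $i\equiv j$ and $i\equiv j+1\pmod{m+n}$ separately, each time invoking periodicity of $\overline{\phantom{j}}$ together with periodicity of $s_{j+1}$.

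The covector case is entirely parallel: replace $q_1$ by $q_3$, reverse the shift direction to $i\mapsto i+m+n$, and apply $\overline{j+m+n}=\bar j+(m-n)$ directly (with no sign flip), giving $q_3^{\overline{j+m+n}}u=q_3^{\bar j}\,(uq_3^{m-n})$. Parity preservation reads $|[u]^{j+m+n}|=(1-s_{j+m+n})/2=(1-s_j)/2=|[u']^j|$.

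There is no real obstacle — the statement is a bookkeeping consequence of \eqref{periodic} promoted to the level of modules. The only mild care needed is to track the opposite shift conventions for $V$ versus $W$ (stemming from $E_i$ lowering the lower index but raising the upper one) and the $s_j$ factor in the $F_i(z)$ formula. I would organize the write-up as a single table of six cases ($E_i$, $F_i$, $K_i^\pm$ on each of $V(u')$, $W(u')$) rather than spelling out structurally identical calculations.
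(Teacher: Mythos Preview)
Your proposal is correct and matches the paper's approach exactly: the paper's proof is the single line ``The lemma follows immediately from \eqref{periodic},'' and you have simply unpacked that into the explicit intertwining checks on generators. Your bookkeeping (parity preservation, the matching of selection rules, and the $q_1$/$q_3$ shifts via $\overline{j\pm(m+n)}=\bar j\pm(m-n)$) is accurate.
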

\begin{proof}
The lemma follows immediately from \eqref{periodic}.
\end{proof}

The vector and covector representations are related as follows.
Recall the isomorphism $\tau_\s$, see Lemma \ref{lem tau}.
Let $V'(u)$ be the vector representation of the  algebra $\E_{\s'}(q_3^{-1},q_2^{-1},q_1^{-1})$. We denote the basis of $V'(u)$ by $[u]_j'$.
\begin{lem}\label{lem:V-W}
The covector representation $W(u)$ of $\Es$ is obtained from the  vector representation  $V'(u)$ of  $\E_{\s'}(q_3^{-1},q_2^{-1},q_1^{-1})$ via twisting by the isomorphism $\tau_\s$. Namely, define a linear isomorphism of vector spaces
\begin{align*}
    \tau_W: \  &W(u)\to V'(u), \\
     &[u]^{j} \mapsto [u]_{-j}'.
\end{align*}
Then, 
\begin{align*}
g\, w =\tau_\s(g)\,\tau_W (w), \qquad w\in  W(u), \  g\in \Es.
\end{align*}
\end{lem}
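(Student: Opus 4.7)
The plan is to verify the identity $g\cdot w=\tau_\s(g)\cdot\tau_W(w)$ directly on the algebra generators $E_i(z)$, $F_i(z)$, $K_i^\pm(z)$ acting on the basis vectors $[u]^j$ of $W(u)$, comparing with the action of $E_{-i}(z)$, $-F_{-i}(z)$, $K_{-i}^\pm(z)$ on $[u]_{-j}'\in V'(u)$, where in $V'(u)$ the defining parameters are $q_1'=q_3^{-1}$, $q_2'=q_2^{-1}$, $q_3'=q_1^{-1}$, and in particular $q'=q^{-1}$. Since both sides are explicit in the defining formulas for the vector and covector modules, the proof reduces to a bookkeeping exercise. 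I would work one generator at a time and rely on two elementary combinatorial identities.

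The first identity to extract is the comparison of the maps $\bar{\,\cdot\,}$ on the two parity sequences: writing $\overline{\,\cdot\,}\,'$ for the map defined by $\s'$ via \eqref{overline}, one has $\overline{-j}\,'=\bar{j}$ for every $j\in\Z$. This follows from $s'_i=-s_{-i+1}$ together with the recursion $\overline{k+1}=\bar{k}+s_{k+1}$ that characterizes the map $\bar{\,\cdot\,}$. The second identity concerns $\psi$: since $(q')^2=q_2^{-1}$, the function $\psi'_k$ (built from $q'$ instead of $q$) satisfies $\psi'_k(z)=\psi_{-k}(z)$ by direct inspection of the definition $\psi_k(z)=(q^k-q^{-k}z)/(1-z)$.

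With these two identities in hand, checking $E_i(z)$ is immediate: both sides produce $\delta(q_3^{\bar{j}}u/z)[u]_{-j-1}'$ under the condition $i\equiv j\pmod{m+n}$, since $(q_1')^{-\overline{-j}\,'}=q_3^{\overline{-j}\,'}=q_3^{\bar{j}}$. For $F_i(z)$, the $-1$ sign in $\tau_\s(F_i(z))=-F_{-i}(z)$ is exactly compensated by the fact that the scalar appearing in the $V'(u)$ formula is $s'_{-j}=-s_{j+1}$, so both sides give $s_{j+1}\delta(q_3^{\bar{j}}u/z)[u]_{-j}'$. For $K_i^\pm(z)$, there are two nontrivial cases ($i\equiv j$ and $i\equiv j-1$); in each, the index shift $s'_{-j+1}=-s_j$, the change $q\leftrightarrow q^{-1}$ flipping $\psi'_k=\psi_{-k}$, and the identity $\overline{-j}\,'=\bar{j}$ conspire to match the formula for $W(u)$ exactly.

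There is no real obstacle here beyond careful bookkeeping: the main points to be vigilant about are the sign flip in $\tau_\s(F_i)$, the substitution $q\mapsto q^{-1}$ (equivalently $\psi_k\mapsto\psi_{-k}$) forced by $(q')^2=q_2^{-1}$, and the symmetry $\overline{-j}\,'=\bar{j}$ relating the two indexings. Once these are settled, the three generator checks are routine and yield the lemma.
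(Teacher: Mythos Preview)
Your proposal is correct and follows essentially the same approach as the paper: a direct verification on the generators, with the key combinatorial observation being that $\overline{-j}$ computed in parity $\s'$ equals $\bar{j}$ computed in parity $\s$. You spell out more of the bookkeeping than the paper does (in particular the auxiliary identity $\psi'_k=\psi_{-k}$ coming from $q'=q^{-1}$ and the sign cancellation between $\tau_\s(F_i)=-F_{-i}$ and $s'_{-j}=-s_{j+1}$), but the argument is the same.
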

\begin{proof}
The lemma is proved by a direct computation noting that $\overline{(-j)}$ in parity $\s'$ equals $\bar j$ in parity $\s$.
\end{proof}

\medskip

\subsection{Tensor products of vector and covector representations}
In this section, we use the comultiplication \eqref{coproduct} to define an action of $\Es$ on the tensor products of  representations $V(u)\otimes V(v)$, $W(u)\otimes W(v)$, $V(u)\otimes W(v)$ and find irreducible submodules. 

Note that the comultiplication involves infinite sums and, 
therefore, it can be used only under the condition that these infinite sums are convergent. We describe these conditions explicitly.

\begin{lem}{\label{lem VV}}
Let $u,v \in \Cx$.
\begin{enumerate}
    \item The comultiplication \eqref{coproduct} defines an action of $\Es$ on $V(u)\otimes V(v)$ if and only if $u\neq q_1^{(m-n)r} v$, $r\in \Z$.   \label{lem VV.i}
     
     \item If the $\Es$-module $V(u)\otimes V(v)$ is well-defined, then it is a tame module of level zero.
    
    \item If $v\neq q_2^{\pm 1}q_1^{(m-n)r}u$, the module $V(u)\otimes V(v)$ is irreducible.
    \item If $v=q_2q_1^{(m-n)r}u$, the module $V(u)\otimes V(v)$ has an irreducible submodule spanned by $[u]_j\otimes[v]_k$, with $j-k\geq (m+n)r$, where the equality is allowed only if $s_{j+1}=-1$. The quotient module is irreducible.
    
    \item If $v=q_2^{-1}q_1^{(m-n)r}u$, the module $V(u)\otimes V(v)$ has an irreducible submodule spanned by $[u]_j\otimes[v]_k$, with $j-k\geq (m+n)r$, where the equality is allowed only if $s_{j+1}=1$. The quotient module is irreducible.
    
\end{enumerate}
\begin{proof} The non-trivial matrix coefficients of action of $E_k(z)$  and $F_k(z)$ are 
\begin{align}
   [u]_i\otimes [v]_j \to [u]_i\otimes [v]_{j-1}, \qquad  [u]_i\otimes [v]_j \to  [u]_{i+1}\otimes [v]_{j},\notag  \\
   [u]_i\otimes [v]_j \to  [u]_{i-1}\otimes [v]_{j} , \qquad  [u]_i\otimes [v]_j \to  [u]_{i}\otimes [v]_{j+1}. \label{matrix coeff}
\end{align}
The first two come from terms of the form $1\otimes E_j(z)$ and $F_{i+1}(z)\otimes 1$. Thus, they are given by delta functions. Therefore, they are well-defined and non-trivial. 

The last two come from terms of the form $E_{i}(z)\otimes K_{i}^-(z)$ and $K_{j+1}^+(z)\otimes F_{j+1}(z)$. Thus, they are given by delta functions multiplied by the value of a rational function evaluated at the support of the delta function. Therefore, these matrix coefficients could be zero or undefined.

Consider $E_k(z)\otimes K_k^-(z)$. The support of the delta function has the form $z=q_1^{-\overline k+(m-n)r_1}u$, with $r_1\in \Z$. The pole of the eigenfunction of $K_k^-(z)$ is $z=q_1^{-\overline k+(m-n)r_2}V$, with $r_1\in \Z$. Clearly, it can be undefined if and only if $v/u=q_1^{(m-n)r}$, $r\in \Z$. Similarly, one obtains the same condition for $K_k^+(z)\otimes F_k(z)$, and part (i) of the lemma is proved. 

The $v/u\neq q_1^{(m-n)r}$, $r\in \Z$, eigenvalues of $K_k^\pm(z)$ have different poles in $V(u)$ and $V(v)$. Both poles cannot cancel; in addition, $V(u)$  and $V(v)$ are tame, so part (ii) follows.

Zeroes of $K_k^{\pm}(z)$ in $V(u)$ have the form $z=q_2^{\pm 1}q_1^{-\overline k+(m-n)r}$. Therefore, if $u/v\neq q_2^{\pm1}q_1^{(m-n)r}$, we have non-zero matrix coefficients \eqref{matrix coeff}. Since the tensor product is tame, it is clear that it is irreducible.

Consider part (iv).
Due to Lemma \eqref{lem: periodic}, it is enough to treat the case $v=q_2u$. Let $v=q_2u$. Then, the following matrix coefficients become zero:
\begin{align*}
   &[u]_i\otimes [v]_{i-1} \stackrel{E_i}{\to}   [u]_{i-1}\otimes [v]_{i-1}, \qquad \quad\  [u]_{i-1}\otimes [v]_{i-2} \stackrel{F_{i-1}}{\to}  [u]_{i-1}\otimes [v]_{i-1}  \quad\  (s_i=1),  \\
   &[u]_{i-1}\otimes [v]_{i-1} \stackrel{E_{i-1}}{\to}  [u]_{i-2}\otimes [v]_{i-1} , \qquad  [u]_{i-1}\otimes [v]_{i-1} \stackrel{F_i}{\to}  [u]_{i-1}\otimes [v]_{i} \qquad (s_i=-1).
\end{align*}
It follows that $[u]_i\otimes[v]_j$ for all $i>j$, and $[u]_i\otimes[v]_i$ with $i$ such that $s_{i+1}=-1$ form a unique submodule of $V(u)\otimes V(uq_2)$. The part (iv) is proved. 

The part (v) is done similarly.
For $v=q_2^{-1}u$, the matrix coefficients which become zero are:
\begin{align*}
   &[u]_i\otimes [v]_{i-1} \stackrel{E_i}{\to}   [u]_{i-1}\otimes [v]_{i-1}, \qquad \quad\  [u]_{i-1}\otimes [v]_{i-2} \stackrel{F_{i-1}}{\to}  [u]_{i-1}\otimes [v]_{i-1}  \quad (s_i=-1),  \\
   &[u]_{i-1}\otimes [v]_{i-1} \stackrel{E_{i-1}}{\to}  [u]_{i-2}\otimes [v]_{i-1} , \qquad  [u]_{i-1}\otimes [v]_{i-1} \stackrel{F_i}{\to}  [u]_{i-1}\otimes [v]_{i} \qquad\  (s_i=1).
\end{align*}

\end{proof}
\end{lem}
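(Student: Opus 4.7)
The plan is to compute, for each relevant generator, the matrix coefficients of the coproduct action on a basis vector $[u]_i\otimes[v]_j$, and then read off each assertion from the location of zeros and poles in those coefficients.

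\textbf{Part (i).} From $\Delta(E_k(z))=E_k(z)\otimes K_k^-(z)+1\otimes E_k(z)$ and $\Delta(F_k(z))=F_k(z)\otimes 1+K_k^+(z)\otimes F_k(z)$, the only terms in the action whose coefficients are not pure delta functions come from $E_k(z)\otimes K_k^-(z)$ and $K_k^+(z)\otimes F_k(z)$. In the first, $E_k(z)$ produces $\delta(q_1^{-\bar j}u/z)$ on $V(u)$ for $j\equiv k$, and we must then evaluate the eigenvalue of $K_k^-(z)$ on $V(v)$ at $z=q_1^{-\bar j}u$. The eigenvalue is a product of $\psi_{\pm s_\bullet}$ factors whose poles lie at $z=q_1^{-\bar l}v$ and $z=q_1^{-\overline{l+1}}v$ with $l\equiv k\pmod{m+n}$. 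Using \eqref{periodic}, the set of such poles in $z$ is $\{q_1^{-\bar j+(m-n)r}v:r\in\Z\}$. Therefore the action is ill defined precisely when $u/v=q_1^{(m-n)r}$ for some $r\in\Z$. A symmetric analysis of $K_k^+(z)\otimes F_k(z)$ gives the same condition, proving (i).

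\textbf{Part (ii).} Under the hypothesis of (i), the Cartan eigenvalue of $[u]_i\otimes[v]_j$ is the product of the eigenvalues of the two factors, by $\Delta(K_k^\pm(z))=K_k^\pm(z)\otimes K_k^\pm(z)$. Since each factor is tame with weight depending injectively on its index, and since the sets of poles $\{q_1^{-\bar\bullet}u\}$ and $\{q_1^{-\bar\bullet}v\}$ are disjoint by (i), the products are pairwise distinct. Level zero is immediate from $\Delta(K)=K\otimes K$ and both factors having level zero.

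\textbf{Part (iii).} By tameness, any submodule is spanned by a subset of basis vectors $[u]_i\otimes[v]_j$. I list the four ``cross'' matrix coefficients \eqref{matrix coeff}: two of them are honest delta functions (from $1\otimes E$ and $F\otimes 1$) and are always nonzero, while the other two (from $E\otimes K^-$ and $K^+\otimes F$) carry an extra factor $\psi_{\pm 1}$ evaluated at the delta support. The zeros of $\psi_{\pm 1}(w)=(q^{\pm 1}-q^{\mp 1}w)/(1-w)$ occur at $w=q^{\pm 2}=q_2^{\pm 1}$. Hence, unless $v/u\in q_2^{\pm 1}q_1^{(m-n)\Z}$, every cross matrix coefficient is nonzero, so one can move between any two basis vectors, giving irreducibility.

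\textbf{Parts (iv), (v).} By Lemma~\ref{lem: periodic} it suffices to treat $v=q_2^{\pm 1}u$, so $r=0$. I then identify exactly which cross coefficients vanish: the four vanishings listed in the author's proof arise precisely from the zeros of the two $\psi$-factors just described, and which ones vanish depends on whether $s_{i+1}=\pm 1$. From these vanishings I verify directly that the subset $\{[u]_j\otimes[v]_k:j>k\}\cup\{[u]_j\otimes[v]_j: s_{j+1}=\mp 1\}$ is closed under all generators, hence spans a submodule. Irreducibility of the submodule and the quotient then follows by the same ``all remaining cross coefficients are nonzero'' argument used in (iii), applied separately on each piece.

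The main obstacle is the bookkeeping in parts (iv) and (v): one must carefully match, index by index, which of the four types of cross transitions picks up the $\psi$-zero at $q_2^{\pm 1}$, and check that the parity condition $s_{j+1}=\mp 1$ is exactly the condition under which the ``diagonal'' vectors $[u]_j\otimes[v]_j$ belong to the submodule rather than the quotient. The other parts are essentially formal once the positions of zeros and poles of the rational functions $\psi_{\pm 1}$ are tracked.
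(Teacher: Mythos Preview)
Your proof plan is correct and follows essentially the same approach as the paper's own proof: both analyze the four possible matrix coefficients of $E_k(z)$ and $F_k(z)$, separate the pure-delta ones from those carrying a $\psi$-factor, read off well-definedness from the pole locations, tameness from the disjointness of pole sets, and (ir)reducibility from the zero locations of the $\psi$-factors, reducing parts (iv)--(v) to $r=0$ via Lemma~\ref{lem: periodic}. The only difference is level of detail---the paper writes out the explicit list of vanishing coefficients for $v=q_2^{\pm 1}u$ with their parity conditions on $s_i$, whereas you summarize this as bookkeeping.
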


\begin{lem}{\label{lem WW}}
Let $u,v \in \Cx$.
\begin{enumerate}
    \item The comultiplication \eqref{coproduct} defines an action of $\Es$ on $W(u)\otimes W(v)$ if and only if $u\neq q_3^{(m-n)r} v$, $r\in \Z$.   \label{lem WW.i}
     
     \item If the $\Es$-module $W(u)\otimes W(v)$ is well-defined, then it is a tame module of level zero.
    
    \item If $v\neq q_2^{\pm 1}q_3^{(m-n)r}u$, the module $W(u)\otimes W(v)$ is irreducible.
    \item If $v=q_2q_3^{(m-n)r}u$, the module $W(u)\otimes W(v)$ has an irreducible submodule spanned by $[u]^j\otimes[v]^k$, with $j-k\leq (m+n)r$, where the equality is allowed only if $s_{j}=-1$. The quotient module is irreducible.
    
    \item If $v=q_2^{-1}q_3^{(m-n)r}u$, the module $W(u)\otimes W(v)$ has an irreducible submodule spanned by $[u]^j\otimes[v]^k$, with $j-k\leq (m+n)r$, where the equality is allowed only if $s_{j}=1$. The quotient module is irreducible.
    
    \end{enumerate}
 \end{lem}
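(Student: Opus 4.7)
The proof runs in close parallel to that of Lemma \ref{lem VV}, with the roles of raising and lowering interchanged because $E_i(z)$ \emph{raises} the index on the basis of $W(u)$, whereas it lowers the index on the basis of $V(u)$. Although one could in principle try to deduce the statement from Lemma \ref{lem VV} applied to $\E_{\s'}(q_3^{-1},q_2^{-1},q_1^{-1})$ via the isomorphism $\tau_\s$ of Lemma \ref{lem tau} and the identification $\tau_W$ of Lemma \ref{lem:V-W}, the map $\tau_\s$ is not required to intertwine the coproducts, so a direct verification is no more work.

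For part (i), I examine the four possible off-diagonal matrix coefficients of $\Delta(E_k(z))$ and $\Delta(F_k(z))$ on $W(u)\otimes W(v)$. The summands $1\otimes E_k(z)$ and $F_k(z)\otimes 1$ contribute pure delta functions and are always well-defined. The summands $E_k(z)\otimes K_k^-(z)$ and $K_k^+(z)\otimes F_k(z)$ contribute delta functions multiplied by the value of a rational $K$-eigenfunction at the support of the delta function. On $W(u)$, both the poles of $K_k^\pm(z)$ and the supports of the $E_k/F_k$ delta functions lie at $z=q_3^{\bar k+(m-n)r}u$, $r\in\Z$, and similarly on $W(v)$. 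A pole-collision between the two factors therefore occurs iff $u/v=q_3^{(m-n)r}$ for some $r\in\Z$, giving (i). Part (ii) follows because $\Delta(K^\pm_i(z))=K^\pm_i(z)\otimes K^\pm_i(z)$ acts diagonally on $[u]^j\otimes[v]^k$ with joint eigenvalues separating all basis vectors away from the resonant locus, and both factors have level zero.

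For part (iii), I check that in the generic case none of the four off-diagonal matrix coefficients \emph{vanishes}. Possible vanishings come from $E_k(z)\otimes K_k^-(z)$ or $K_k^+(z)\otimes F_k(z)$ and require a zero of the $K$-eigenfunction on one factor to coincide with the delta-function support on the other. The zeros of $K_k^\pm(z)$ on $W(u)$ lie at $z=q_2^{\pm 1}q_3^{\bar k+(m-n)r}u$; these never meet the delta-function supports on $W(v)$ provided $v\ne q_2^{\pm 1}q_3^{(m-n)r}u$. Hence all matrix coefficients are non-zero, and tameness forces irreducibility.

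For parts (iv) and (v), Lemma \ref{lem: periodic} reduces the claim to $r=0$, i.e., $v=q_2^{\pm 1}u$. In the case $v=q_2u$, I track which zero of $K_k^\pm(z)$ collides with which delta-function support; the collisions occur only on or adjacent to the diagonal $[u]^i\otimes[v]^i$, and the parity $s_i$ selects whether the vanishing sits in the $K^+\otimes F$ channel or the $E\otimes K^-$ channel. The surviving transitions preserve the subspace spanned by $[u]^j\otimes[v]^k$ with $j-k\le 0$, with equality permitted only when $s_j=-1$, giving the claimed submodule. The quotient is non-zero, and an analogous matrix-coefficient check shows it is irreducible. The case $v=q_2^{-1}u$ is symmetric, with the parity condition becoming $s_j=1$, and the general $r$ statements follow by applying Lemma \ref{lem: periodic}. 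The main obstacle is the super-sign bookkeeping from \eqref{sign tensor} together with the shift between the parity conventions $|[u]^j|=(1-s_j)/2$ and $|[u]_j|=(1-s_{j+1})/2$, which is precisely what shifts the parity condition from $s_{j+1}=\pm 1$ in Lemma \ref{lem VV} to $s_j=\mp 1$ here.
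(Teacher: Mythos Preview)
Your proof is correct and follows essentially the same direct approach the paper indicates (``proved similarly to Lemma \ref{lem VV}''), tracking poles, zeros, and delta-function supports of the $K$-eigenfunctions just as in the vector case. One small remark: the paper notes that the result also follows \emph{immediately} from Lemma \ref{lem VV} via the isomorphism $\tau_\s$ and Lemma \ref{lem:V-W}, so your caution that $\tau_\s$ ``is not required to intertwine the coproducts'' is overly conservative---in fact $\tau_\s$ does respect the coproduct on the nose, since it simply relabels generators; also, your final parenthetical ``$s_{j+1}=\pm1\mapsto s_j=\mp1$'' should read $s_j=\pm1$ (the sign does not flip, only the subscript shifts), though your actual parity conditions in the body of the argument are stated correctly.
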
   
\begin{proof}
The lemma can be proved similarly to Lemma \ref{lem VV}. Alternatively, it immediately follows from Lemma \ref{lem VV} and Lemma \ref{lem:V-W}.

We list the matrix coefficients which become zero in part (iv).
Let $v=q_2u$. Then, the vanishing matrix coefficients are:
\begin{align*}
   &[u]^{i-1}\otimes [v]^{i} \stackrel{E_{i-1}}{\to}   [u]^i\otimes [v]^i, \qquad  [u]^i\otimes [v]^{i+1} \stackrel{F_i}{\to}  [u]^i\otimes [v]^i  \qquad (s_i=1),  \\
   &[u]^i\otimes [v]^i \stackrel{E_i}{\to}  [u]^{i+1}\otimes [v]^{i} , \qquad\ \   [u]^i\otimes [v]^i \stackrel{F_{i-1}}{\to}  [u]^i\otimes [v]^{i-1} \quad\  (s_i=-1).
\end{align*}

\end{proof}

Next, we consider tensor products of the forms $V(u)\otimes W(v)$ and $W(u)\otimes V(v)$.
\begin{lem}{\label{lem VW}}
\begin{enumerate}Let $u,v \in \Cx$.
 \item The comultiplication \eqref{coproduct} defines an action of $\Es$ on $V(u)\otimes W(v)$ if and only if $u\neq q_1^aq_3^b v$, $a,b\in \Z$, $a\equiv b \pmod {m-n}$.   \label{lem VW.i}
    
\item The comultiplication \eqref{coproduct} defines an action of $\Es$ on $W(u)\otimes V(v)$ if and only if $u\neq q_1^aq_3^b v$, $a,b\in \Z$, $a\equiv b \pmod {m-n}$.   \label{lem VW.ii}
     
 \item If the $\Es$-module $V(u)\otimes W(v)$ is well-defined, then it is an irreducible tame module of level zero.
 
 \item If the $\Es$-module $W(u)\otimes V(v)$ is well-defined, then it is an irreducible tame module of level zero.
       \end{enumerate}
\end{lem}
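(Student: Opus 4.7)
The plan is to follow the template of Lemmas \ref{lem VV} and \ref{lem WW}, tracing through the matrix coefficients of the coproduct \eqref{coproduct} on basis vectors $[u]_j\otimes [v]^k$ in part (i) and $[u]^j\otimes [v]_k$ in part (ii). Among the four non-trivial matrix coefficients of $E_i(z)$ and $F_i(z)$, the two coming from the summands $1\otimes E_i(z)$ and $F_i(z)\otimes 1$ are pure delta functions and are always well-defined, while the two coming from $E_i(z)\otimes K_i^-(z)$ and $K_i^+(z)\otimes F_i(z)$ are a delta function multiplied by the value of a rational function at the delta support, and can therefore be undefined. The ill-defined condition is that the delta support in $V(u)$ coincides with a pole of $K_i^\pm$ acting on $W(v)$.

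Reading off the formulas, the $E$-delta support on $[u]_j$ is $z=q_1^{-\bar j}u$ (for $i\equiv j\pmod{m+n}$), while the relevant singularity of $K_i^\pm$ on $[v]^k$ is $z=q_3^{\bar k}v$ when $i\equiv k$ and $z=q_3^{\overline{k-1}}v$ when $i\equiv k-1\pmod{m+n}$; the $K^+\otimes F$ term is entirely parallel. Thus the ill-defined condition is $u/v=q_1^{\bar j}q_3^{\bar k}$ under the constraint $j\equiv k\pmod{m+n}$ (or $j\equiv k-1$, which is analogous). Using the periodicity $\overline{j+m+n}=\bar j+m-n$ from \eqref{periodic}, the constraint $j\equiv k\pmod{m+n}$ translates precisely to $\bar j\equiv\bar k\pmod{m-n}$, and conversely the surjectivity of the bar map produces such $j,k$ for every admissible pair $(a,b)$. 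This establishes part (i); part (ii) is identical with the roles of $q_1$ and $q_3$ swapped, or alternatively can be deduced from (i) using the isomorphism $\tau_\s$ and Lemma \ref{lem:V-W}.

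For parts (iii) and (iv), tameness is immediate: the $K_i^\pm$-eigenvalue on $[u]_j\otimes [v]^k$ factors as a product of an eigenvalue from $V(u)$ with poles at powers of $q_1$ times $u$ and an eigenvalue from $W(v)$ with poles at powers of $q_3$ times $v$, and under the well-definedness hypothesis these pole locations separate the different basis vectors. The level is zero because both factors have level zero and $K$ is grouplike. The nontrivial point is irreducibility: I must check that the $E\otimes K^-$ and $K^+\otimes F$ matrix coefficients never vanish on the tensor product. A vanishing would require the delta support in one factor to hit a \emph{zero} (rather than a pole) of the $K_i^\pm$-eigenvalue on the other factor, forcing $u/v=q_2^{\mp 1}q_1^{\bar j}q_3^{\bar k}$. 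Since $q_2=q_1^{-1}q_3^{-1}$ lies in the excluded set $\{q_1^aq_3^b:a\equiv b\pmod{m-n}\}$, and this set is closed under multiplication by $q_2^{\pm 1}$, such a vanishing falls inside the already-excluded ill-defined case. Hence when the tensor product is well-defined, all connecting matrix coefficients are non-zero, and tameness plus connectivity yields irreducibility.

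The main bookkeeping obstacle is tracking the interplay of the two moduli $m+n$ (on the indices $j,k,i$) and $m-n$ (on their bar images), and verifying that the forbidden multiplicative set $\{q_1^aq_3^b:a\equiv b\pmod{m-n}\}$ is simultaneously the set of ill-defined ratios and the set of vanishing ratios; once this closure property is observed, parts (iii) and (iv) reduce to the well-definedness already proved.
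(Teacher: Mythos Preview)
Your proposal is correct and follows the same approach as the paper's own proof, which simply says the argument is similar to Lemmas~\ref{lem VV} and~\ref{lem WW} and notes that under the well-definedness hypothesis there are no non-trivial zeroes of matrix coefficients. You have filled in the details the paper omits --- in particular the translation of the congruence $j\equiv k\pmod{m+n}$ into $\bar j\equiv\bar k\pmod{m-n}$ via \eqref{periodic}, and the observation that multiplication by $q_2^{\pm1}=q_1^{\mp1}q_3^{\mp1}$ preserves the forbidden set so that zeroes of matrix coefficients occur only inside the ill-defined locus --- and these details are exactly what the paper's brief proof is gesturing at.
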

\begin{proof}
The lemma is similar to Lemmas \ref{lem VV},  \ref{lem WW}.  If
$u\neq q_1^aq_3^b v$, $a,b\in \Z$, $a\equiv b$ $\pmod {m-n}$, then there are no non-trivial zeroes of matrix coefficients. 
\end{proof}    
We note that the module $V(u)\otimes W(v)$ is well-defined if and only if the module $W(u)\otimes V(v)$ is well-defined.

We now describe poles and non-trivial zeroes of matrix coefficients in the spaces $V(u)\otimes W(v)$ and $W(u)\otimes V(v)$ when the parameters do not satisfy (i) (or (ii)) of Lemma \ref{lem:V-W}. That is, $u= q_2^aq_3^{(m-n)r}v$. Again, with Lemma \ref{lem: periodic} in mind, we concentrate on the case $u= q_2^av$.

For $i\in\Z$, let 
\begin{align*}
B_\s(i)=\{j\in\Z\ |\ \bar j=i \},
\end{align*}
be the preimage of $i$ under the bar map with respect to parity $\s$. Then, $B_\s(i)$ is a non-empty finite set, we have $\sqcup_i B_\s(i)=\Z$. Note also that if $j_1,j_2\in B_\s(i)$, then $j_1\not\equiv j_2 \pmod{m+n}$.

Let $v=q_2^au$ where $a\in \Z$.

The poles of matrix coefficients in $V(u)\otimes W(v)$ are
\begin{align}
    [u]_i\otimes [v]^i \stackrel{E_i}{\to}  [u]_{i-1}\otimes [v]^i,\qquad  [u]_i\otimes [v]^{i+1} \stackrel{E_i}{\to}  [u]_{i-1}\otimes [v]^{i+1} \qquad (i\in B_\s(a)), \label{E poles 1}\\
    [u]_i\otimes [v]^{i+1}\stackrel{F_i}{ \to}  [u]_i\otimes [v]^i,\qquad  [u]_{i-1}\otimes [v]^{i+1} \stackrel{F_i}{\to} [u]_{i-1}\otimes [v]^i \qquad (i\in B_\s(a)).\label{F poles 1}
\end{align}

The zeroes of matrix coefficients in $V(u)\otimes W(v)$ are
\begin{align*}
    [u]_i\otimes [v]^i \stackrel{E_i}{\to}  [u]_{i-1}\otimes [v]^i,\qquad [u]_{i-1}\otimes [v]^{i+1} \stackrel{F_i}{\to}  [u]_{i-1}\otimes [v]^i \qquad (i\in B_\s(a+s_i)), \\
    [u]_i\otimes [v]^{i+1} \stackrel{E_i}{\to}  [u]_{i-1}\otimes [v]^{i+1},\qquad [u]_i\otimes [v]^{i+1} \stackrel{F_i}{\to}  [u]_i\otimes [v]^i \qquad (i\in B_\s(a-s_{i+1})).
\end{align*}

The poles of matrix coefficients in $W(u)\otimes V(v)$ are
\begin{align}
    [u]^i\otimes [v]_i \stackrel{E_i}{\to}  [u]^{i+1}\otimes [v]_i,\qquad  [u]^i\otimes [v]_{i-1} \stackrel{E_i}{\to}  [u]^{i+1}\otimes [v]_{i-1} \qquad (i\in B_\s(-a)), \label{E poles 2}\\
    [u]^i\otimes [v]_{i-1} \stackrel{F_i}{\to}  [u]^i\otimes [v]_i,\qquad  [u]^{i+1}\otimes [v]_{i-1} \stackrel{F_i}{\to}  [u]^{i+1}\otimes [v]_{i} \qquad (i\in B_\s(-a)).\label{F poles 2}
\end{align}

The zeroes of matrix coefficients in $W(u)\otimes V(v)$ are
\begin{align*}
    [u]^i\otimes [v]_i \stackrel{E_i}{\to}  [u]^{i+1}\otimes [v]_i,\qquad [u]^{i+1}\otimes [v]_{i-1} \stackrel{F_i}{\to}  [u]^{i+1}\otimes [v]_{i} \qquad  (i\in B_\s(-a-s_{i+1})), \\
    [u]^i\otimes [v]_{i-1} \stackrel{E_i}{\to}  [u]^{i+1}\otimes [v]_{i-1}, \qquad [u]^i\otimes [v]_{i-1} \stackrel{F_i}{\to}  [u]^i\otimes [v]_i \qquad (i\in B_\s(s_{i}-a)).
\end{align*}

\medskip

In the following sections, we will work with (often semi-infinite) tensor products of copies of the same module with the evaluation parameter shifted by $q_2^{\pm 1}$. We say a tensor product such that the following pattern occurs
$$\cdots\otimes M(u)\otimes M(uq_2^{s})\otimes  M(uq_2^{2s})\otimes \cdots\ $$
has {\it positive direction} if $s=1$, and {\it negative direction} if $s=-1$.

\section{Fock modules}\label{Fock sec}
In this section, we discuss a family of $\Es$-modules that we call Fock spaces.  The Fock modules $\mathcal F_\Lambda(u)$ are parameterized by finite highest weights $\Lambda$ of the form $\Lambda=\pm\Lambda_i$ or $\Lambda=s_i(r\Lambda_{i-1}-(r+1)\Lambda_{i})$, $i\in \hat I,\ r\in\Z$, and the arbitrary evaluation parameter $u\in\C^\times$.

\subsection{The case of \texorpdfstring{$\mc F_{\La_i}$}{Fi}}\label{sec pure fock 1}

The modules  $\mc F_{\La_i}$ are constructed from the tensor products of vector and covector representations in positive direction.
We start with the case of $i=0$. 

Consider the following tensor product of vector spaces in positive direction
$$
V(u)\otimes W(u) \otimes V(uq_2)\otimes W(uq_2)\otimes \dots \otimes V(uq_2^{k-1})\otimes W(uq_2^{k-1}).
$$
It has a basis given by vectors
\begin{align}
\ket{\bs \lambda,\bs \mu}^{(k)}:=[u]_{\bs \lambda}^{\bs \mu}=[u]_{\lambda_1-1}\otimes [u]^{-\mu_1}\otimes [uq_2]_{\lambda_2-1}\otimes [uq_2]^{-\mu_2}\otimes \cdots\otimes [uq_2^{k-1}]_{\lambda_k-1}\otimes [uq_2^{k-1}]^{-\mu_k},
\end{align}
where $\lambda_i,\mu_i\in\Z$.

The coproduct \eqref{coproduct} has poles and does not define an $\Es$-module structure on the whole tensor product. However, there exists a subspace where the $\Es$-action is well-defined.

Recall the definition of $\s^{\sigma_1}_{\sigma_2}$-partitions given at the end of Section \ref{parities sec}. 

First, consider the subspace $\mc F^{(k)}$ of the tensor product given by
$$
\mc F^{(k)}=\langle \ket{\bs \lambda,\bs \mu}^{(k)}\ | \ \bs \lambda\  {\text{is a}}\ \s_+^-{\text{-partition}}, \ \bs \mu\  {\text{is a}}\ \s_-^-{\text{-partition, and}}\  k\geq\ell(\bs \lambda)\geq\ell(\bs\mu) \rangle. 
$$

Let $k>3$. Consider the subspace $\mathring{\mc F}^{(k)}\subset \mc F^{(k)}$ given by
$$
\mathring{\mc F}^{(k)}=\langle \ket{\bs \lambda, \bs \mu}^{(k)}\in \mc F^{(k)}\ |\ k-3\geq \ell(\bs \lambda)\geq\ell(\bs\mu) \rangle. 
$$

We can make the action of generators of $\Es$ on $\mathring{\mc F}^{(k)}$ well-defined as follows. 
Let 
$$
F_i^{(k)}(z)=\Delta^{(2k)}F_i(z) \qquad (i\in\hat I),
$$
and
$$
E_i^{(k)}(z)=\begin{cases}
    \psi(q_2^ku/z) \Delta^{(2k)} E_0(z)  &(i=0);\\
    \Delta^{(2k)} E_i(z)  &(i\in I),
\end{cases}  \qquad 
(K_i^{\pm})^{(k)}(z)=\begin{cases}
    \psi(q_2^ku/z) \Delta^{(2k)} K_0^\pm(z)  &(i=0);\\
    \Delta^{(2k)} K_i^\pm (z)  &(i\in I),
\end{cases}
$$
where $\Delta^{(2k)}$ is the $(2k)$-th iteration of the coproduct \eqref{coproduct}.

Then, we have the following lemma.
\begin{lem}\label{U^k lem}
Let $v\in \mathring{\mc F}^{(k)}$. Then, $E_i^{(k)}(z)v$,  $F_i^{(k)}(z)v$, and  $K_i^{(k)}(z)v$, $i\in\hat I$, are well-defined vectors in $\mc F^{(k)}$. Moreover, relations \eqref{relCK}--\eqref{Serre6} are satisfied when applied to $v$.
\end{lem}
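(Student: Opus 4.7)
The plan is to analyze the singularities of the coproduct-expanded operators acting on basis vectors $\ket{\bs\lambda,\bs\mu}^{(k)}$, identify the location of each potential pole, and verify that the constraints built into $\mathring{\mc F}^{(k)}$ (together with the prefactor $\psi(q_2^k u/z)$ for $i=0$) force every pole to cancel. First I would iterate \eqref{coproduct} to write
\[
\Delta^{(2k)}E_i(z) = \sum_{j=1}^{2k} \underbrace{K_i^+(z)\otimes\cdots\otimes K_i^+(z)}_{j-1} \otimes\, E_i(z) \otimes \underbrace{1\otimes\cdots\otimes 1}_{2k-j}
\]
(with the analogous formulas for $F_i$ and $K_i^\pm$, obtained directly from \eqref{coproduct}), so that on a basis vector each summand produces a scalar (a product of eigenvalues of $K_i^\pm(z)$ on the vector/covector factors of Section \ref{vec sec}) multiplied by a single $\delta$-function coming from the action of $E_i(z)$ in one slot.

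Next I would compile the pole inventory. Each $K_i^\pm(z)$-eigenvalue is a $\psi_k$-function of $z$ divided by the evaluation parameter, so poles occur only at the explicit points listed in \eqref{E poles 1}--\eqref{F poles 2} and in the analysis of Lemmas \ref{lem VV}, \ref{lem WW}, \ref{lem VW}. The potentially problematic tensor adjacencies in our product are exactly the two analyzed there: same-parameter pairs $V(uq_2^{j-1})\otimes W(uq_2^{j-1})$ and parameters-differ-by-$q_2$ pairs $W(uq_2^{j-1})\otimes V(uq_2^{j})$. My central task is to pair up consecutive summands in the $\Delta^{(2k)}$-expansion and show that the residues at each of these pole locations cancel. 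This is where the identities $\psi_k(q_2^k z)\psi_k(z^{-1})=1$ and $\psi_l(q_2^{-k}z)\psi_k(z)=\psi_{k+l}(z)$ recorded just before the definition of $V(u)$ enter, and where the hypothesis that $(\bs\lambda,\bs\mu)$ is a pair of $\s_{\pm}^-$-partitions with $\ell(\bs\lambda)\geq \ell(\bs\mu)$ is used: this condition is exactly the one that makes adjacent eigenvalues match up to the required zeros (c.f.\ the "$s_{j+1}=-1$'' type constraints appearing in the submodule conditions of Lemmas \ref{lem VV}(iv) and \ref{lem WW}(iv)).

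There will remain a potential boundary pole near $z=q_2^k u$ coming from the rightmost slot $W(uq_2^{k-1})$ when $i=0$; this is exactly what the prefactor $\psi(q_2^k u/z)$ in $E_0^{(k)}(z)$ and $(K_0^\pm)^{(k)}(z)$ is designed to kill. I would check on $\mathring{\mc F}^{(k)}$, where $\ell(\bs\lambda)\leq k-3$, that the three "buffer'' slots on the right guarantee that no other singularities are produced by multiplying by $\psi(q_2^k u/z)$ and that the resulting vector still lies in $\mc F^{(k)}$ (i.e., the relevant $\bs\lambda,\bs\mu$ are still $\s$-partitions with $k\geq\ell(\bs\lambda)\geq\ell(\bs\mu)$). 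For the relations \eqref{relCK}--\eqref{Serre6}, since $\Delta$ is an algebra homomorphism for $\Es$ they are automatic for $\Delta^{(2k)}$-images whenever all terms are defined; the $i=0$ prefactor $\psi(q_2^k u/z)$ is a scalar function that appears on both sides of each relation symmetrically (moving it across $\delta$-functions via $\delta(w/z)\,\psi(q_2^k u/z)=\delta(w/z)\,\psi(q_2^k u/w)$ in the $E$--$F$ commutator, and pulling out of the $q$-brackets in the Serre relations), so the modifications transfer verbatim.

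The main obstacle is the bookkeeping in the cancellation step: there are $2k$ summands, each with several pole locations, and the matching pairs depend on whether the relevant color $i$ equals $\bar j\pmod{m+n}$ for an index $j$ at the "edge'' of $\bs\lambda$ or $\bs\mu$. Organizing this by cases according to $s_i, s_{i+1}$, and the parity of the corresponding simple root, and then using the super-partition conditions (equality permitted iff $s_\pm=-1$) to certify that the surviving residue is zero rather than merely finite, is where care is needed. Once this case analysis is set up cleanly, the verification of each case reduces to a direct application of the $\psi_k$ identities recalled above.
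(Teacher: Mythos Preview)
Your outline has the right shape---expand the coproduct, locate the potential singularities using the pole lists \eqref{E poles 1}--\eqref{F poles 2}, and invoke the $\s$-partition constraints---but there are two substantive problems.

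First, the iterated coproduct formula you wrote for $E_i$ is actually the one for $F_i$. From \eqref{coproduct} one gets
\[
\Delta^{(2k)}E_i(z)=\sum_{j=1}^{2k}\underbrace{1\otimes\cdots\otimes1}_{j-1}\otimes E_i(z)\otimes\underbrace{K_i^-(z)\otimes\cdots\otimes K_i^-(z)}_{2k-j},
\]
so the $K$'s sit to the \emph{right} of $E_i$ and are $K_i^-$, not $K_i^+$. This matters for the next point.

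Second, and more seriously, the cancellation mechanism is not ``pair up consecutive summands and cancel residues.'' Different summands carry $\delta$-functions supported at different points (the $q$-content of the box being removed in that slot), so they are not competing contributions to the same pole. The actual issue is that within a \emph{single} summand the scalar coefficient of $\delta(cu/z)$ is a product of $K_i^-$-eigenvalues on the slots to the right, and one of those $\psi$-factors may have a pole precisely at $z=cu$. The paper's argument (and what you must do) is: use the $\s^-_\pm$-partition condition and $\ell(\bs\lambda)\geq\ell(\bs\mu)$ to show that whenever such a polar factor is present, the tail of the vector is forced to contain another slot whose $K_i^-$-eigenvalue contributes a \emph{zero} at the same $z$, so the product is finite. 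For example, if $[uq_2^s]_0\otimes[uq_2^s]^0$ is present (giving the first pole in \eqref{E poles 1}), then $\mu_{s+1}=0$ forces $\mu_{s+2}=0$, so $[uq_2^{s+1}]^0$ appears to the right and its $K_0^-$-eigenvalue cancels the pole. This is zero-versus-pole within one term, not residue matching across terms.

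Finally, the prefactor $\psi(q_2^k u/z)$ does not kill a boundary pole; it \emph{introduces} a pole at $z=q_2^ku$ (its zero is at $z=q_2^{k-1}u$). Its purpose is the stabilization proved in Lemma~\ref{stable lem}. What you must check here is only that on $\mathring{\mc F}^{(k)}$, the three-slot buffer $\ell(\bs\lambda)\leq k-3$ keeps every $\delta$-support away from $z=q_2^ku$, so the new pole is harmless. Your remarks on transporting the relations through the scalar prefactor are correct.
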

\begin{proof}
We consider the case $E_i^{(k)}(z)$. 
As before, the matrix coefficients are delta functions multiplied by rational functions. 

First, we claim that there are no poles of the rational functions at the support of the delta functions. Indeed, such poles could happen if two of the factors are as in \eqref{E poles 1} or \eqref{E poles 2}, where $v=uq_2^a$ and $a\geq 0$. Consider the first case in \eqref{E poles 1}. Since the bottom indices are at least $-1$ and the upper indices at most $0$, we must have $i=0$ or $i=-1$. Since $\bar{0}=0$, we have $\overline{-1}=-s_0$. So, if $s_0=-1$, then we have two possibilities $a=0$, $i=0$ and $a=1$, $i=-1$. And, if $s_0=1$, then only the case $a=0$, $i=0$ is present. We treat $i=a=0$. 

Suppose $[uq_2^s]_0\otimes [uq_2^s]^0$ occurs in some vector belonging to the submodule. The presence of $[uq_2^s]^0$ forces the presence of $[uq_2^{s+1}]^0$. So, we must have
$$v=w\otimes [uq_2^s]_0\otimes [uq_2^s]^0\otimes [uq_2^{s+1}]_{j}\otimes [uq_2^{s+1}]^0\otimes\dots\otimes[uq_2^{k-1}]_{-1}\otimes [uq_2^{k-1}]^0,
$$ 
where, a priori,  $j=-1$ or $j=0$.

If $s_0=-1$, then the possible pole of $E_0(z)$, coming from the $K_0^-(z)$ eigenfunction of $[uq_2^s]^0$, is canceled by the  $K_0^-(z)$ eigenfunction of $[uq_2^{s+1}]^0$. 

If $s_0=1$, we have to look at $s_1$. If $s_1=1$, then $j=-1$. Whereas, if $s_1=-1$, then  $j=-1$ or $j=0$. In either case, the possible pole of $E_0(z)$, coming from the $K_0^-(z)$ eigenfunction of $[uq_2^s]^0$, is canceled by the  $K_0^-(z)$ eigenfunction of $[uq_2^{s+1}]_j$. 

The other cases are checked similarly.

Next, we check that the result of the application is inside $\mc F^{(k)}$. For that, we use the information about zeroes of matrix coefficients.  
The fact that $\bs\lambda$ and $\bs\mu$ stay $\s^-_+$ and $\s^-_-$ partitions follows from Lemmas \ref{lem VV} and \ref{lem WW}.
We check that the condition $\ell(\bs\lambda)\geq\ell(\bs\mu)$ is preserved. Indeed, if $v=w_1\otimes [uq_2^s]_{-1}\otimes [uq_2^s]^0\otimes w_2$, then $F_{-1}(z)$ acts on $[uq_2^s]^0$ by zero, which comes from the eigenfunction of $K^+_{-1}(z)$ on $[uq_2^s]_{-1}$. Similarly, if $v=w_1\otimes [uq_2^s]_{0}\otimes [uq_2^s]^j\otimes [uq_2^{s+1}]_{-1}\otimes [uq_2^{s+1}]^0\otimes w_2$, $j<0$, then $E_{0}(z)$ acts on $[uq_2^s]_0$ by zero, which comes from the eigenfunction of $K^-_{0}(z)$ on $[uq_2^{s+1}]^0$ if $s_0=-1$, or from the eigenfunction of $K^-_{0}(z)$ on $[uq_2^{s+1}]_{-1}$ if $s_0=1$.
\end{proof}

Moreover, we now show that the modified action is stable under the change of $k$. We have a natural embedding 
$$\phi^{(k)}:\ \mc F^{(k)}\hookrightarrow \mc F^{(k+1)}, \qquad  \ket{\bs\lambda,\bs\mu}^{(k)} \mapsto  \ket{\bs\lambda,\bs\mu}^{(k+1)}.$$

Note that $\phi^{(k)}(\mathring{\mc F}^{(k)})\subset \mathring{\mc F}^{(k+1)}$ and $\phi^{(k+2)}\circ \phi^{(k+1)}\circ \phi^{(k)}(\mc F^{(k)})=\mathring{\mc F}^{(k+3)}$.

Then, we have the following lemma.

\begin{lem}\label{stable lem}
 For any $v\in \mathring{\mc F}^{(k)}$ and $i\in \hat I$, we have
\begin{align*}
 &\phi^{(k)} E_i^{(k)}(z)v=  E_i^{(k+1)}(z)\phi^{(k)}v, \\
 &\phi^{(k)} F_i^{(k)}(z)v=  F_i^{(k+1)}(z)\phi^{(k)}v, \\
 &\phi^{(k)} (K_i^\pm)^{(k)}(z)v=  (K^\pm_i)^{(k+1)}(z)\phi^{(k)}v.
\end{align*}
\end{lem}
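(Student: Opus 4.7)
The plan is to unpack $\Delta^{(2k+2)}$ applied to $\phi^{(k)}(v) = v \otimes [uq_2^k]_{-1} \otimes [uq_2^k]^0$. Iterating the coproduct formulas \eqref{coproduct} gives
\begin{align*}
\Delta^{(2k+2)}(E_i(z)) &= \Delta^{(2k)}(E_i(z)) \otimes K_i^-(z)^{\otimes 2} + \mathrm{id}^{\otimes 2k} \otimes E_i(z) \otimes K_i^-(z) + \mathrm{id}^{\otimes 2k+1} \otimes E_i(z),\\
\Delta^{(2k+2)}(F_i(z)) &= \Delta^{(2k)}(F_i(z)) \otimes \mathrm{id}^{\otimes 2} + K_i^+(z)^{\otimes 2k} \otimes F_i(z) \otimes \mathrm{id} + K_i^+(z)^{\otimes 2k+1} \otimes F_i(z),\\
\Delta^{(2k+2)}(K_i^\pm(z)) &= \Delta^{(2k)}(K_i^\pm(z)) \otimes K_i^\pm(z)^{\otimes 2}.
\end{align*}
In each line the first summand is the \emph{inherited} contribution: it acts on the first $2k$ tensor slots by $\Delta^{(2k)}X(z)$ and multiplies the result by the scalar eigenvalue of $K_i^\pm(z)^{\otimes 2}$ on the trailing pair $[uq_2^k]_{-1}\otimes[uq_2^k]^0$. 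The remaining summands are \emph{residual} terms, in which $E_i$ or $F_i$ acts on one of the two new factors. The goal is to check that, after incorporating the prefactors $\psi(q_2^{\bullet}u/z)$, the inherited piece reproduces $\phi^{(k)}(X^{(k)}(z)v)$ and every residual piece vanishes on $\phi^{(k)}(v)$.

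The inherited contribution (and the complete $K_i^\pm$ identity) is handled by a direct computation of the $K_i^\pm(z)\otimes K_i^\pm(z)$ eigenvalue on $[uq_2^k]_{-1}\otimes[uq_2^k]^0$ using the explicit vector and covector module formulas. For $i\neq 0,N-1$ this eigenvalue is $1$. For $i=N-1$ it equals $\psi_{-s_0}(q_1^{s_0}uq_2^k/z)\psi_{s_0}(q_3^{-s_0}uq_2^k/z)$, which collapses to $\psi_0=1$ via $\psi_l(q_2^{-k}z)\psi_k(z)=\psi_{k+l}(z)$ together with $q_1q_2q_3=1$. For $i=0$ it equals $\psi_{s_0}(uq_2^k/z)\psi_{-s_0}(uq_2^k/z)$, and a short direct computation yields the elementary identity
\begin{equation*}
\psi(q_2 w)\,\psi_{s_0}(w)\,\psi_{-s_0}(w) = \psi(w),
\end{equation*}
so $\psi(q_2^{k+1}u/z)$ times this eigenvalue equals $\psi(q_2^k u/z)$, which is precisely the shift required to pass from the $\psi$-prefactor at level $k+1$ to the one at level $k$ in the definitions of $E_0^{(k)}$ and $(K_0^\pm)^{(k)}$.

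It remains to prove vanishing of the residual $E_i$ and $F_i$ terms. Inspection of the module formulas shows that $E_i[uq_2^k]_{-1}\neq 0$ only for $i\equiv N-1$, $E_i[uq_2^k]^0\neq 0$ only for $i\equiv 0$, $F_i[uq_2^k]_{-1}\neq 0$ only for $i\equiv 0$, and $F_i[uq_2^k]^0\neq 0$ only for $i\equiv N-1$, so at most one residual summand per identity can be non-zero, giving four cases in total. Three of them are killed by a zero of a $\psi$-factor at the support of the resulting delta function: the $E_0$-residual dies because $\psi(q_2^{k+1}u/z)$ evaluated at $z=uq_2^k$ equals $\psi(q^2)=0$; and the two $i=N-1$ residuals die because the companion $K_{N-1}^\pm$-eigenvalue evaluated at the delta localization reduces to $\psi_{s_0}(q^{2s_0})=0$ or $\psi_{-s_0}(q^{-2s_0})=0$ respectively. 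The remaining case is the $F_0$-residual in which $F_0$ hits $[uq_2^k]_{-1}$; this is the only place that genuinely uses the strengthened hypothesis $v\in\mathring{\mc F}^{(k)}$, since it guarantees the presence in $v$ of a trailing vacuum pair $[uq_2^{k-1}]_{-1}\otimes[uq_2^{k-1}]^0$, on which the $K_0^+(z)$-eigenvalue evaluated at $z=uq_2^k$ equals $\psi_{s_0}(q_2^{-1})\psi_{-s_0}(q_2^{-1})=0$ (one factor being the vanishing $\psi_{-1}(q^{-2})$).

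The main technical annoyance is sign bookkeeping coming from the super coproduct and the periodic identifications modulo $N$ in $\hat I$. However, since all $K_i^\pm$ are even and every residual term vanishes anyway, the signs only affect the inherited term on both sides of the desired equality and cancel; the entire proof therefore reduces to the $\psi$-function identities highlighted above.
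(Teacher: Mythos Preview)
Your proof is correct and follows essentially the same approach as the paper: showing that the appended tail $[uq_2^k]_{-1}\otimes[uq_2^k]^0$ is acted on trivially by all generators, with the $\psi(q_2^{k}u/z)$ modification of $E_0$ and $K_0^\pm$ absorbing the $K_0$-eigenvalue of the tail. The paper states this in one sentence (``all operators act trivially on the tail\dots the modification of $K^\pm_0(z)$ is important here''), whereas you have carried out the $\psi$-function computations and the four residual vanishings explicitly; in particular your identification of the $F_0$-residual as the unique place requiring the extra vacuum pair inside $v$ is exactly the content the paper leaves implicit.
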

\begin{proof}
The lemma follows from the fact that all operators act trivially on the tail of the form $[uq_2^k]_{-1}\otimes [uq_2^k]^0$. We note that the modification of $K^\pm_0(z)$ is important here.
\end{proof}

Let $\mc F(u)$ be the inductive limit of $\mathring{\mc F}^{(k)}$. The basis of $\mc F(u)$ is given by $\ket{\bs\lambda,\bs\mu}$ with $\bs\lambda$ being a $\s^-_+$-partition and $\bs\mu$ a $\s^-_-$-partition and $\ell(\bs\lambda)\geq \ell(\bs\mu)$. 

We define the action of $\Es$ on $\mc F(u)$ in the obvious way. Namely, for any $v\in\mc F(u)$ there exists $k$ such that $v\in \mathring{\mc F}^{(k)}$. Then we let $E_i(z)$, $F_i(z)$, $K_i^\pm(z)$ act on $v\in\mc F(u)$ as $E_i^{(k)}(z)$, $F_i^{(k)}(z)$, $(K_i^\pm)^{(k)}(z)$.

\begin{thm}\label{pure thm 1}
The space $\mc F(u)$ is an irreducible tame highest weight $\Es$-module with highest weight $(\psi(u/z),1,\dots,1)$.
\end{thm}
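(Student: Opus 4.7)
The plan is to exhibit $\ket{\emptyset,\emptyset}$, realized as $[u]_{-1}\otimes[u]^{0}\otimes[uq_2]_{-1}\otimes[uq_2]^{0}\otimes\cdots$ at every sufficiently large level $k$, as the singular vector with the stated weight, and then to use the combinatorics of the action of $F_i(z)$ and $E_i(z)$ to conclude cyclicity, tameness, and irreducibility.

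First I would verify the $K$-weight. Since $(K_i^{\pm})^{(k)}(z)$ acts diagonally, its eigenvalue on the vacuum is the product over all tensor factors. For $i\notin\{0,N-1\}$ every factor contributes $1$. For $i=0$, a pair $[uq_2^s]_{-1}\otimes[uq_2^s]^{0}$ contributes $\psi_{s_0}(uq_2^s/z)\,\psi_{-s_0}(uq_2^s/z)$; the identity $\psi_{-k}(z)=\psi_k(q_2^k z)^{-1}$ rewrites this as $\psi(uq_2^s/z)/\psi(uq_2^{s+1}/z)$, so the $k$-fold product telescopes to $\psi(u/z)/\psi(uq_2^k/z)$; the correction factor $\psi(q_2^k u/z)$ built into $(K_0^{\pm})^{(k)}(z)$ cancels the denominator and leaves $\psi(u/z)$. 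For $i=N-1$, a parallel computation using $q_1q_2q_3=1$ gives $\psi_{-s_0}(q_1^{s_0}uq_2^s/z)\,\psi_{s_0}(q_3^{-s_0}uq_2^s/z)=1$ on every pair.

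Next I would verify that $\ket{\emptyset,\emptyset}$ is singular. Expanding $\Delta^{(2k)}(E_i(z))=\sum_j 1^{\otimes(j-1)}\otimes E_i(z)\otimes K_i^-(z)^{\otimes(2k-j)}$, the only summands that could act non-trivially on the vacuum come from $i\in\{0,N-1\}$. For $i=N-1$, the term $E_{N-1}(z)[uq_2^s]_{-1}=\delta(q_1^{s_0}uq_2^s/z)\,[uq_2^s]_{-2}$ is multiplied by the $K_{N-1}^{-}(z)$-eigenvalue $\psi_{s_0}(q_3^{-s_0}uq_2^s/z)$ on the adjacent $[uq_2^s]^{0}$, and on the delta support this evaluates to $\psi_{s_0}(q_2^{s_0})=0$. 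For $i=0$, the telescoping product of $K_0^{-}(z)$-eigenvalues to the right of the active $W$-factor, combined with the correction $\psi(q_2^k u/z)$, leaves a factor $\psi(uq_2^{s+1}/z)$ which equals $\psi(q_2)=0$ on the delta support $z=uq_2^s$. Hence $E_i(z)\ket{\emptyset,\emptyset}=0$ for every $i\in\hat I$.

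For cyclicity, tameness, and irreducibility I would use the combinatorics of the action. Each $F_i(z)$ has non-zero matrix coefficients only of the form $[v]_{j-1}\to[v]_j$ or $[v]^{j+1}\to[v]^j$ with $i\equiv j\pmod{N}$, so it adds exactly one box to either $\bs\lambda$ or $\bs\mu$; by induction on the total number of boxes, every basis vector is reached from the vacuum by a finite product of $F_i$-modes, establishing that $\mc F(u)$ is a highest weight module generated by $\ket{\emptyset,\emptyset}$. The eigenvalues of $(K_i^{\pm})^{(k)}(z)$ on $\ket{\bs\lambda,\bs\mu}$ are explicit rational functions whose zero/pole data (the analogues of the above telescoping, interrupted at the corner boxes of the shape) uniquely determine the pair $(\bs\lambda,\bs\mu)$ under the genericity assumption on $q_1,q_2$, so the module is tame. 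Finally, any non-zero submodule is weighted, contains some $\ket{\bs\lambda,\bs\mu}$, and iterated applications of $E_i(z)$ remove boxes one by one back to $\ket{\emptyset,\emptyset}$, at which point cyclicity forces irreducibility. The principal difficulty is the careful bookkeeping of cancellations in the iterated coproduct, both of delta-function supports and of super-signs, all driven by the two analytic identities $\psi_k(q_2^k)=0$ and $\psi_{-k}(z)=\psi_k(q_2^k z)^{-1}$.
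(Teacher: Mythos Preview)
Your proposal is correct and follows essentially the same approach as the paper. The paper cites Lemmas~\ref{U^k lem} and~\ref{stable lem} for well-definedness of the action, defers the proof of tameness to the end of Section~\ref{sec: pictures} (where an explicit reconstruction of $(\bs\lambda,\bs\mu)$ from the poles of $K_i^\pm(z)$ is given), and then argues irreducibility exactly as you do: tameness forces any submodule to contain some $\ket{\bs\lambda,\bs\mu}$, and connectedness under $E_i(z)$ and $F_i(z)$ finishes. Your explicit verification of the highest weight and of $E_i(z)\ket{\emptyset,\emptyset}=0$ via the telescoping identities $\psi_{s_0}(w)\psi_{-s_0}(w)=\psi(w)/\psi(q_2w)$ and $\psi(q_2)=0$ is more detailed than what the paper writes out, but it is the computation implicit in the statement. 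The one place where your sketch is thinner than the paper is tameness: you assert that the zero/pole data of the $K_i^\pm(z)$-eigenvalues determine $(\bs\lambda,\bs\mu)$, whereas the paper actually carries out the inductive reconstruction (read off the first row and column from the outermost poles, strip them, repeat).
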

\begin{proof}
Lemmas \ref{U^k lem}, \ref{stable lem} show that the action of $\Es$ is well-defined.

We prove the property of being tame at the end of Section \ref{sec: pictures}.

Since the module is tame, any submodule of $\mc F(u)$ has a basis consisting of eigenvectors of $K_i^\pm(z)$, that is, of vectors of the form $\ket{\bs \lambda,\bs\mu}$. Moreover, if $\ket{\bs \lambda,\bs\mu}$ is in a submodule, then
every vector of the form $\ket{{\bs \lambda}',{\bs\mu}'}$ in the linear combinations $F_i(z)\ket{\bs \lambda,\bs\mu}$ or $E_i(z)\ket{\bs \lambda,\bs\mu}$ is in the same submodule. Note that, for each vector of the form $\ket{\bs \lambda,\bs\mu}$, $(\bs\la,\bs \mu)\neq (\bs\emptyset,\bs\emptyset)$, there exists at least one $i\in \hat{I}$ such that $E_i(z)\ket{\bs \lambda,\bs\mu}\neq 0$. It follows that $\mc F(u)$ has no non-trivial submodules. 
\end{proof}

The finite weight of the highest weight vector is $\Lambda_0$, therefore we reflect it in the notation and denote $\mc F(u)$ by $\mc F_{\Lambda_0}(u)$.

\medskip

Alternatively, one can construct $\mc F_{\Lambda_0}(u)$ using the product (also in the positive direction)
$$
W(u)\otimes V(u) \otimes W(uq_2)\otimes V(uq_2)\otimes W(uq_2^2)\otimes V(uq_2^2)\otimes \cdots \ .
$$
The highest weight vector is given by
$$
[u]^1\otimes [u]_{0}\otimes [uq_2]^{1}\otimes [uq_2]_{0}\otimes  [uq_2^2]^{1}\otimes [uq_2^2]_{0}\otimes \cdots \ .
$$

\medskip

With obvious changes, we similarly construct $\mc F_{\Lambda_i}(u)$,  for each $i\in \hat I$. In particular, such a module is constructed through tensor products in positive direction. The basis of $\mc F_{\Lambda_i}(u)$
is given by 
$$
\ket{\bs\lambda,\bs\mu}_i^+:=
[uq_1^{\bar{i}}]_{\lambda_1+i-1}\otimes [uq_1^{\bar{i}}q_2^{\bar{i}}]^{i-\mu_1}\otimes [uq_1^{\bar{i}}q_2]_{\lambda_2+i-1}\otimes [uq_1^{\bar{i}}q_2^{\bar{i}+1}]^{i-\mu_2} \otimes \cdots, $$
where $\la_j,\mu_j\in\Z_{\geq 0}$ are such that $\lambda_1\geq\la_2\geq \dots$ with equality $\lambda_j=\lambda_{j+1}$ allowed if $s_{i+\la_j}=-1$ or if $\lambda_j=\mu_j=0$, and
$\mu_1\geq \mu_2\geq \dots$ with equality $\mu_j=\mu_{j+1}$ allowed if $s_{i-\mu_j}=-1$ or if $\mu_j=\la_{j+1}=0$. In addition, only finitely many $\la_j,\mu_j$ are non-zero and $\mu_j=0$ whenever $\la_j=0$.

The highest weight vector of $\mc F_{\Lambda_i}(u)$ is then $\ket{\bs\emptyset,\bs\emptyset}_i^+$ and the highest weight of $\mc F_{\Lambda_i}(u)$ is given by
$(1,\dots, 1,\psi(u/z), 1,\dots, 1)$, where $\psi(u/z)$ is in the $i$-th position.

\subsection{Explicit formulas}
The action of generators of $\Es$ on $\mc F_{\Lambda_i}(u)$ is easily obtained from the construction. We give here the explicit formulas for $i=0$.

Given a partition $\bs \lambda=(\lambda_1\geq \lambda_2\geq\dots\geq  \lambda_k)$ with at most $k$ parts and $j\in\{1,\dots,k\}$, define the new sequence $\bs \lambda -\bs 1_j=(\lambda_1, \lambda_2,\dots, \lambda_j-1, \dots,\lambda_k)$.

We consider the module $\mc F_{\Lambda_0}$ defined in Theorem \ref{pure thm 1}.  

We call a pair $(\bs\lambda,\bs\mu)$ {\it admissible} if $\bs\lambda$ is a $\s_+^-$-partition, $\bs \mu$ is a $\s_-^-$-partition, and $\ell(\bs\lambda)\geq \ell(\bs\mu)$. The basis of $\mc F_{\Lambda_0}$ is given by $\ket{\bs\lambda,\bs\mu}_0^+$ with admissible $(\bs\lambda,\bs\mu)$.

We list all non-trivial matrix coefficients. We skip the zero and pluses from the notation of vectors writing simply $\ket{\bs\lambda,\bs\mu}$ for $\ket{\bs\lambda,\bs\mu}_0^+$. We also use the bra-ket notation.

Let $(\bs\lambda,\bs\mu)$ and $(\bs\lambda-\bs 1_j,\bs\mu)$ be admissible pairs of partitions. If $i\equiv \lambda_j-1$, then
\begin{align}
    \bra{\bs\lambda-\mathbf{1}_j,\bs\mu}E_i(z)\ket{\bs\lambda,\bs\mu}=
        (-1)^{|i|(\sum_{k=1}^{j-1}(1-s_{\lambda_k})/2+(1-s_{-\mu_k})/2)}
        \delta\left(q_1^{-(\overline{\lambda_j-1})}q_2^{j-1}u/z\right) \times\\
        \times \prod_{\substack{k=j+1\\i\equiv \lambda_k-1}}^\infty \psi\left(\left(q_1^{-(\overline{\lambda_k-1})}q_2^{k-1}u/z\right)^{-s_{\lambda_k}}\right)
        \prod_{\substack{k=j+1\\i\equiv \lambda_k}}^\infty \psi\left(\left(q_1^{-(\overline{\lambda_k})}q_2^{k-1}u/z \right)^{s_{\lambda_k}}\right) \times \notag\\
        \times \prod_{\substack{k=j\\i\equiv -\mu_k}}^\infty \psi\left(\left(q_3^{(\overline{-\mu_k})}q_2^{k-1}u/z \right)^{-s_{-\mu_k}}\right) 
    \prod_{\substack{k=j\\i\equiv -\mu_k-1}}^\infty \psi\left(\left(q_3^{(\overline{-\mu_k-1})}q_2^{k-1}u/z \right)^{s_{-\mu_k}}\right). \notag
\end{align}

Let $(\bs\lambda,\bs\mu)$ and $(\bs\lambda,\bs\mu-\bs 1_j)$ be admissible pairs of partitions. If $i\equiv -\mu_j$, then
\begin{align}
    \bra{\bs\lambda,\bs\mu-\mathbf{1}_j}E_i(z)\ket{\bs\lambda,\bs\mu}=
        (-1)^{|i|((1-s_{\lambda_j})/2+\sum_{k=1}^{j-1}(1-s_{\lambda_k})/2+(1-s_{-\mu_k})/2)}
        \delta\left(q_3^{(\overline{-\mu_j})}q_2^{j-1}u/z\right) \times\\
        \times \prod_{\substack{k=j+1\\i\equiv \lambda_k-1}}^\infty \psi\left(\left(q_1^{-(\overline{\lambda_k-1})}q_2^{k-1}u/z\right)^{-s_{\lambda_k}}\right)
        \prod_{\substack{k=j+1\\i\equiv \lambda_k}}^\infty \psi\left(\left(q_1^{-(\overline{\lambda_k})}q_2^{k-1}u/z \right)^{s_{\lambda_k}}\right) \times \notag\\
        \times \prod_{\substack{k=j+1\\i\equiv -\mu_k}}^\infty \psi\left(\left(q_3^{(\overline{-\mu_k})}q_2^{k-1}u/z \right)^{-s_{-\mu_k}}\right) 
    \prod_{\substack{k=j+1\\i\equiv -\mu_k-1}}^\infty \psi\left(\left(q_3^{(\overline{-\mu_k-1})}q_2^{k-1}u/z \right)^{s_{-\mu_k}}\right). \notag
\end{align}

Let $(\bs\lambda,\bs\mu)$ and $(\bs\lambda+\bs 1_j,\bs\mu)$ be admissible pairs of partitions.
If $i\equiv \lambda_j$, then
\begin{align}
    \bra{\bs\lambda+\mathbf{1}_j,\bs\mu}F_i(z)\ket{\bs\lambda,\bs\mu}=
        s_{\lambda_j}(-1)^{|i|(\sum_{k=1}^{j-1}(1-s_{\lambda_k})/2+(1-s_{-\mu_k})/2)}
        \delta\left(q_1^{-(\overline{\lambda_j})}q_2^{j-1}u/z\right) \times\\
        \times \prod_{\substack{k=1\\i\equiv \lambda_k-1}}^{j-1} \psi\left(\left(q_1^{-(\overline{\lambda_k-1})}q_2^{k-1}u/z\right)^{-s_{\lambda_k}}\right)
        \prod_{\substack{k=1\\i\equiv \lambda_k}}^{j-1} \psi\left(\left(q_1^{-(\overline{\lambda_k})}q_2^{k-1}u/z \right)^{s_{\lambda_k}}\right) \times \notag\\
        \times \prod_{\substack{k=1\\i\equiv -\mu_k}}^{j-1} \psi\left(\left(q_3^{(\overline{-\mu_k})}q_2^{k-1}u/z \right)^{-s_{-\mu_k}}\right) 
    \prod_{\substack{k=1\\i\equiv -\mu_k-1}}^{j-1} \psi\left(\left(q_3^{(\overline{-\mu_k-1})}q_2^{k-1}u/z \right)^{s_{-\mu_k}}\right). \notag
\end{align}

Let $(\bs\lambda,\bs\mu)$ and $(\bs\lambda,\bs\mu+\bs 1_j)$ be admissible pairs of partitions.
If $i\equiv -\mu_j-1$, then
\begin{align}
    \bra{\bs\lambda,\bs\mu+\mathbf{1}_j}F_i(z)\ket{\bs\lambda,\bs\mu}=
        s_{-\mu_j+1}(-1)^{|i|((1-s_{\lambda_j})/2+\sum_{k=1}^{j-1}(1-s_{\lambda_k})/2+(1-s_{-\mu_k})/2)}
        \delta\left(q_3^{(\overline{-\mu_j})}q_2^{j-1}u/z\right) \times\\
        \times \prod_{\substack{k=1\\i\equiv \lambda_k-1}}^{j} \psi\left(\left(q_1^{-(\overline{\lambda_k-1})}q_2^{k-1}u/z\right)^{-s_{\lambda_k}}\right)
        \prod_{\substack{k=1\\i\equiv \lambda_k}}^{j} \psi\left(\left(q_1^{-(\overline{\lambda_k})}q_2^{k-1}u/z \right)^{s_{\lambda_k}}\right) \times \notag\\
        \times \prod_{\substack{k=1\\i\equiv -\mu_k}}^{j-1} \psi\left(\left(q_3^{(\overline{-\mu_k})}q_2^{k-1}u/z \right)^{-s_{-\mu_k}}\right) 
    \prod_{\substack{k=1\\i\equiv -\mu_k-1}}^{j-1} \psi\left(\left(q_3^{(\overline{-\mu_k-1})}q_2^{k-1}u/z \right)^{s_{-\mu_k}}\right). \notag
\end{align}

Finally, let $(\bs\lambda,\bs\mu)$  be an admissible pair of partitions. Then
\begin{align}
    \bra{\bs\lambda,\bs\mu}K^\pm_i(z)\ket{\bs\lambda,\bs\mu}=
        \prod_{\substack{k=1\\i\equiv \lambda_k-1}}^\infty \psi\left(\left(q_1^{-(\overline{\lambda_k-1})}q_2^{k-1}u/z\right)^{-s_{\lambda_k}}\right)
        \prod_{\substack{k=1\\i\equiv \lambda_k}}^\infty \psi\left(\left(q_1^{-(\overline{\lambda_k})}q_2^{k-1}u/z \right)^{s_{\lambda_k}}\right) \times \notag\\
        \times \prod_{\substack{k=1\\i\equiv -\mu_k}}^\infty \psi\left(\left(q_3^{(\overline{-\mu_k})}q_2^{k-1}u/z \right)^{-s_{-\mu_k}}\right) 
    \prod_{\substack{k=1\\i\equiv -\mu_k-1}}^\infty \psi\left(\left(q_3^{(\overline{-\mu_k-1})}q_2^{k-1}u/z \right)^{s_{-\mu_k}}\right). \notag
\end{align}

We will not write explicit formulas for other cases. Instead, we give a combinatorial description of the action in Section \ref{sec: pictures}.

\subsection{The case of \texorpdfstring{$\mc F_{-\La_i}$}{F-i}}\label{sec pure fock 2}

The construction of $\mc F_{-\La_i}$ is parallel to the construction of $\mc F_{\La_i}$ but it uses tensor products in negative direction. Essentially, one changes $q$ to $q^{-1}$ in all formulas. We give a few details.

Consider the tensor product of vector spaces in negative direction
$$
V(u)\otimes W(u) \otimes V(uq_2^{-1})\otimes W(uq_2^{-1})\otimes \dots \otimes V(uq_2^{-k+1})\otimes W(uq_2^{-k+1}).
$$

We consider a subspace $\mc F^{(k)}$ of the tensor product given by
$$
\mc F^{(k)}=\langle \ket{\bs \lambda,\bs \mu}^{(k)} \ |\ \bs \lambda\  {\text{is a}}\ \s^+_+{\text{-partition}}, \ \bs \mu\  {\text{is a}}\ \s^+_-{\text{-partition, and}}\  k\geq\ell(\bs \lambda)\geq\ell(\bs\mu) \rangle. 
$$

In this case, we modify the action as follows. Let
$$
F_i^{(k)}(z)=\Delta^{(2k)}F_i(z) \qquad (i\in\hat I),
$$
and
$$
E_i^{(k)}(z)=\begin{cases}
    \psi_{-1}(q_2^{-k} u/z) \Delta^{(2k)} E_0(z)  &(i=0);\\
    \Delta^{(2k)} E_i(z)  &(i\in I),
\end{cases}  \qquad 
(K_i^{\pm})^{(k)}(z)=\begin{cases}
    \psi_{-1}(q_2^{-k}u/z) \Delta^{(2k)} K_0^\pm(z)  &(i=0);\\
    \Delta^{(2k)} K_i^\pm (z)  &(i\in I).
\end{cases}
$$
With this modification, the action stabilizes as $k$ increases.

Again we consider the inductive limit of ${\mc F}^{(k)}$ and denote it by $\mc F(u)$. The basis of $\mc F(u)$ is given by $\ket{\bs\lambda,\bs\mu}$ with $\bs\lambda$ being a $\s^+_+$-partition, $\bs\mu$ a $\s_-^+$-partition, and $\ell(\bs\lambda)\geq \ell(\bs\mu)$. We define the action of $\Es$ on $\mc F(u)$ in the obvious way. 

\begin{thm}\label{pure thm 2}
The space $\mc F(u)$ is an irreducible tame highest weight $\Es$-module with highest weight $(\psi_{-1}(u/z),1,\dots,1)$.
\end{thm}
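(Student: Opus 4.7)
The plan is to mirror the proof of Theorem~\ref{pure thm 1} step by step, with the tensor factors now stacked in the negative direction. First I would establish analogues of Lemmas~\ref{U^k lem} and~\ref{stable lem}. In the positive-direction construction, the factor $\psi(q_2^k u/z)$ prepended to $E_0^{(k)}(z)$ and $(K_0^\pm)^{(k)}(z)$ cancelled the pole produced by the tail $[uq_2^{k-1}]_{-1}\otimes [uq_2^{k-1}]^0$ under $\Delta^{(2k)}K_0^-(z)$. In the present setup the tail is the negative-direction analogue with evaluation parameter $uq_2^{-k+1}$, and the factor $\psi_{-1}(q_2^{-k}u/z)$ provides the corresponding cancellation. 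Going through the pole list~\eqref{E poles 1}--\eqref{F poles 2} with parameter shifts by $q_2^{-1}$ in place of $q_2$, the case analysis is formally identical to that in Lemma~\ref{U^k lem}: each potential pole from a $K_0^-$ eigenvalue on a $V$ or $W$ factor is cancelled either by the inserted $\psi_{-1}$ or by the eigenvalue on the adjacent factor.

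I would then verify that the image of the action lies in the span of admissible vectors $\ket{\bs\lambda,\bs\mu}^{(k)}$ with $\bs\lambda$ a $\s_+^+$-partition, $\bs\mu$ a $\s_-^+$-partition, and $\ell(\bs\lambda)\geq \ell(\bs\mu)$. This uses the zero analysis from Lemmas~\ref{lem VV}, \ref{lem WW}, \ref{lem VW}: in the positive direction the zeroes of the $K_i^\pm$ eigenvalues enforce the $\s^-$ type, while in the negative direction the sign change in the exponent of $q_2$ swaps these to the $\s^+$ type. Stability under the embeddings $\phi^{(k)}:\mc F^{(k)}\hookrightarrow \mc F^{(k+1)}$ then follows as in Lemma~\ref{stable lem}, since all operators act trivially on the appended tail and the $\psi_{-1}$ modification compensates precisely for the two new tensor factors.

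Once the action on the inductive limit $\mc F(u)$ is well-defined, the highest-weight computation is direct: only the first tensor factor contributes a non-trivial eigenvalue to $K_i^\pm(z)\ket{\bs\emptyset,\bs\emptyset}$, and the $\psi_{-1}$ modification replaces $\psi(u/z)$ by $\psi_{-1}(u/z)$ in the $i=0$ component, yielding the claimed weight $(\psi_{-1}(u/z),1,\dots,1)$. Tameness is deferred to Section~\ref{sec: pictures}, exactly as in Theorem~\ref{pure thm 1}, and irreducibility then follows from the same argument given there: the one-dimensional weight decomposition forces any submodule to be a span of basis vectors $\ket{\bs\lambda,\bs\mu}$, and the creation/annihilation matrix coefficients, which add or remove a single box, connect all admissible such vectors.

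The main technical obstacle is the pole cancellation in the first step. Structurally the argument mirrors the positive-direction case, but reversing the direction swaps the roles of various factors: a pole produced by a $K_0^-$ eigenvalue on one factor must now be cancelled by an adjacent factor on the opposite side, and the parity bookkeeping (which depends on $s_0$ and $s_1$, and on which admissible ``tail configurations'' are actually possible in $\mathring{\mc F}^{(k)}$) must be reverified case by case. Everything downstream of this is formal adaptation.
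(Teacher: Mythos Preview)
Your proposal is correct and follows exactly the approach the paper takes: the paper's proof of Theorem~\ref{pure thm 2} is literally the single sentence ``The proof of the theorem is parallel to that of Theorem~\ref{pure thm 1},'' and your outline is a faithful and accurate expansion of what ``parallel'' means here. You have correctly identified the key change (the $\psi_{-1}(q_2^{-k}u/z)$ modification in place of $\psi(q_2^{k}u/z)$, the swap to $\s^+$-partitions) and the deferral of tameness to Section~\ref{sec: pictures}.
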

\begin{proof}
The proof of the theorem is parallel to that of Theorem \ref{pure thm 1}.
\end{proof}

The finite weight of the highest weight vector is $-\Lambda_0$. Therefore, we denote $\mc F(u)$ constructed in Theorem \ref{pure thm 2} by $\mc F_{-\Lambda_0}(u)$.

\medskip 

With obvious changes, we construct $\mc F_{-\Lambda_i}(u)$  for each $i\in \hat I$. The basis of $\mc F_{-\Lambda_i}(u)$
is given by 
$$\ket{\bs\lambda,\bs\mu}_i^{-}:=[q_1^{\bar{i}}u]_{\lambda_1+i-1}\otimes [q_1^{\bar{i}}q_2^{\bar{i}}u]^{i-\mu_1}\otimes [q_1^{\bar{i}}q_2^{-1}u]_{\lambda_2+i-1}\otimes [q_1^{\bar{i}}q_2^{\bar{i}-1}u]^{i-\mu_2} \otimes \cdots , $$
where $\la_j,\mu_j\in\Z_{\geq 0}$ are such that $\lambda_1\geq\la_2\geq \dots$ with equality $\lambda_j=\lambda_{j+1}$ allowed if $s_{i+\la_j}=1$ or if $\lambda_j=\mu_j=0$, and
$\mu_1\geq \mu_2\geq \dots$ with equality $\mu_j=\mu_{j+1}$ allowed if $s_{i-\mu_j}=1$ or if $\mu_j=\la_{j+1}=0$. In addition, only finitely many $\la_j,\mu_j$ are non-zero and $\mu_j=0$ whenever $\la_j=0$.

The highest weight vector of $\mc F_{-\Lambda_i}(u)$ is  $\ket{\bs\emptyset,\bs\emptyset}_i^{-}$ and the highest weight  of $\mc F_{-\Lambda_i}(u)$ is given by
$(1,\dots, 1,\psi_{-1}(u/z), 1,\dots, 1)$, where $\psi_{-1}(u/z)$ is in the $i$-th position.

\subsection{The case of \texorpdfstring{$\mc F_{s_{i+1}(r\La_{i+1}-(r+1)\La_{i})}$}{F,r,i+1,i}}\label{sec non pure fock 1}

We consider tensor products of Fock spaces constructed in the previous sections with appropriate tensor products of vector representations to obtain a larger set of Fock modules.

We start with the case of $i=0$. Set $s=s_1$. Let $r\in\Z_{>0}$.

The modules $\mc F_{\La_{0}}(u)$ and $\mc F_{-\La_{0}}(u)$ are constructed in Theorems \ref{pure thm 1}, \ref{pure thm 2} using tensor product in positive and negative directions, respectively. We multiply it by an extra tensor product of vector representations in the same direction. Namely,  consider:
$$
U=V(u)\otimes V(uq_2^{-s})\otimes V(uq_2^{-2s})\otimes \cdots\otimes V(uq_2^{s(1-r)})\otimes \mc F_{-s\La_{0}}(uq_2^{-sr}).
$$
The basis of $U$ is given by
$$
\ket{(\bs\nu,\bs\lambda),\bs\mu}_{0,r}^{-s}=[u]_{\nu_1}\otimes[uq_2^{-s}]_{\nu_{2}}\otimes \dots\otimes [uq_2^{s(1-r)}]_{\nu_{r}}\otimes \ket{\bs\lambda,\bs\mu}_{0}^{-s}.
$$
Here, $\ket{\bs\lambda,\bs\mu}_{0}^{-s}$ denotes 
$\ket{\bs\lambda,\bs\mu}_{0}^{+}$ if $s=-1$, and  $\ket{\bs\lambda,\bs\mu}_{0}^{-}$ if $s=1$.

We consider a subspace $\mc F(u)\subset U$,
$$\mc F(u)=\langle \ket{(\bs\nu,\bs\lambda),\bs\mu}_{0,r}^{-s}\ |\ (\bs\nu+1,\bs\lambda)\ {\text {is a }} \s^-_+ {\text  {-partition if }} s_1=-1 {\text { and }} \s^+_+ {\text  {-partition if }} s_1=1 \rangle.
$$
Here, $\bs\nu+1=(\nu_1+1,\dots,\nu_r+1)$.
\begin{thm}\label{non pure 1 0,r}
The space $\mc F(u)$ is an irreducible tame highest weight $\Es$-module with highest weight vector $\ket{(\bs\emptyset,\bs\emptyset),\bs\emptyset}_{0,r}^{-s}$ and highest weight   $(\psi_{-s(r+1)}(q_2^{-sr}u/z),\psi_{sr}(q_1^{-s}u/z),1,\dots,1)$.
\end{thm}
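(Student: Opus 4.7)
The plan is to follow closely the pattern of Theorems \ref{pure thm 1} and \ref{pure thm 2}: regularize the action on a finite truncation, show that the modified operators are stable under an inductive limit, compute the highest weight explicitly, and deduce irreducibility and tameness from the combinatorial structure of the basis. Concretely, for each $k>3$ I would consider the truncated tensor product
\[
U^{(k)} = V(u)\otimes V(uq_2^{-s})\otimes \cdots\otimes V(uq_2^{s(1-r)})\otimes \mc F^{(k)}_{-s\La_0}(uq_2^{-sr}),
\]
and let $\mc F^{(k)}\subset U^{(k)}$ be the subspace cut out by the shape conditions of the theorem together with $\ell(\bs\lambda)\leq k-3$. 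The generators $E_0(z)$ and $K_0^{\pm}(z)$ would be modified at the deepest layer by the same factor used in Section \ref{sec pure fock 1} or \ref{sec pure fock 2} (according to the sign of $s$), and the other generators left unchanged. Exactly as in Lemma \ref{U^k lem}, these operators are then well-defined on the interior $\mathring{\mc F}^{(k)}$ and satisfy the defining relations; compatibility with $\phi^{(k)}: \mc F^{(k)}\hookrightarrow \mc F^{(k+1)}$ follows as in Lemma \ref{stable lem}, yielding a well-defined $\Es$-action on the inductive limit $\mc F(u)$.

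Next I would verify that $\mc F(u)$ is preserved by this action. On the Fock tail this is part of Theorems \ref{pure thm 1}--\ref{pure thm 2}. Within the block of $r$ vector factors, iterated application of Lemma \ref{lem VV}(iv,v) (with the parameters shifted by $q_2^{-s}$ at each step) guarantees that the $\s_+^{-s}$-partition condition on $\bs\nu+\bs 1$ is exactly what is required to cancel the dangerous matrix coefficients. The non-trivial point is the \emph{junction} between $V(uq_2^{s(1-r)})$ and the head $V(uq_2^{-sr})\otimes W(uq_2^{-sr})$ of the Fock tail: here one must check that the vanishings from Lemma \ref{lem VV} match those used in the pure Fock proofs, so that the two shape conditions glue into a single constraint on the pair $(\bs\nu+\bs 1,\bs\lambda)$. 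This junction analysis is the main obstacle; everything else is a bookkeeping extension of Section \ref{sec pure fock 1}.

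The highest weight computation is direct: $K_i^{\pm}(z)$ acts diagonally on each tensor factor, so for $\ket{(\bs\emptyset,\bs\emptyset),\bs\emptyset}_{0,r}^{-s}$ the eigenvalue is the product of the elementary factors $\psi_{\pm s_j}(\,\cdot\,)$ from $V(uq_2^{-s(j-1)})$ together with the highest-weight eigenvalue of $\mc F_{-s\La_0}(uq_2^{-sr})$. Using the telescoping identity $\psi_l(q_2^{-k}z)\psi_k(z)=\psi_{k+l}(z)$, the $r+1$ factors for $i=0$ collapse to $\psi_{-s(r+1)}(q_2^{-sr}u/z)$, the $r$ factors for $i=1$ collapse to $\psi_{sr}(q_1^{-s}u/z)$, and all other positions give $1$, matching the claimed weight. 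The vector is annihilated by $\Es^+$ because on each $[uq_2^{-s(j-1)}]_0$ only $E_j(z)$ with $j\equiv 0$ can act non-trivially, and the delta-function support for such a step lies outside the admissible region $(\bs\nu+\bs 1,\bs\lambda)$; on the Fock head, $E_0(z)$ is killed by the boundary modification.

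Finally, tameness follows from genericity of $q_1,q_2$, which makes distinct basis vectors eigenvectors of the $K_i^{\pm}(z)$ with pairwise distinct rational eigenfunctions. Irreducibility then reduces to checking that the highest weight vector generates every basis vector under $\Es^-$: reading off the explicit matrix coefficients from the end of Section \ref{sec pure fock 1}, each admissible addition of a single box corresponds to a non-vanishing coefficient of some $F_i(z)$, so the submodule generated by $\ket{(\bs\emptyset,\bs\emptyset),\bs\emptyset}_{0,r}^{-s}$ exhausts $\mc F(u)$. Conversely, since any submodule is weighted and the action of $\Es^+$ likewise reaches the highest weight vector from any basis element, $\mc F(u)$ has no proper non-zero submodules.
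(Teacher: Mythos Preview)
Your proposal is correct and follows the same approach as the paper, which records only ``The proof is done by a direct check.'' You have simply spelled out what that direct check entails: regularize and pass to the inductive limit as in Sections~\ref{sec pure fock 1}--\ref{sec pure fock 2}, use Lemma~\ref{lem VV} to handle the block of $r$ vector factors and the junction with the Fock tail, telescope the $\psi$-factors to obtain the stated highest weight, and deduce tameness and irreducibility from the combinatorics of the basis exactly as in Theorem~\ref{pure thm 1}. One minor phrasing point: the vanishing of $E_0(z)$ on the highest weight vector is not due to the boundary modification on the Fock head (that modification concerns the tail), but rather to the zero matrix coefficients recorded in Lemma~\ref{lem VV}(iv,v), which propagate through the $r$ vector factors and across the junction into the first $V$-factor of the Fock space.
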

\begin{proof}
The proof is done by a direct check.
\end{proof}
The finite weight of the highest weight vector is ${s_{1}(r\La_{1}-(r+1)\La_{0})}$. Therefore, we denote $\mc F(u)$ constructed in Theorem \ref{non pure 1 0,r} by $\mc F_{s_{1}(r\La_{1}-(r+1)\La_{0})}(u)$.

\medskip 

With obvious changes, we construct the module $\mc F_{s_{i+1}(r\La_{i+1}-(r+1)\La_{i})}(u)$,  for each $i\in \hat I$. 

The basis of $\mc F_{s_{i+1}(r\La_{i+1}-(r+1)\La_{i})}(u)$
is given by 
$$\ket{(\bs\nu,\bs\lambda),\bs\mu}_{i,r}^{-s_{i+1}}:=[uq_1^{\overline{i}}]_{\nu_1+i}\otimes [uq_1^{\overline{i}}q_2^{-s_{i+1}}]_{\nu_2+i}\otimes \cdots \otimes [uq_1^{\overline{i}}q_2^{s_{i+1}(1-r)}]_{\nu_r+i}\otimes \ket{\bs\lambda,\bs\mu}_{i}^{-s_{i+1}}, $$
where $\ket{\bs\lambda,\bs\mu}_{i}^{-s_{i+1}}\in \mc F_{-s\La_i}(uq_2^{-s_{i+1}r})$, $\nu_j\in\Z_{\geq 0}$ are such that $\nu_1\geq\nu_2\geq \dots\geq \nu_r$ with equality $\nu_j=\nu_{j+1}$ allowed if $s_{i+\nu_j+1}=s_{i+1}$, and such that $\nu_r+1\geq \la_1$ with equality allowed if $s_{i+\la_1}=s_{i+1}$.

The highest weight of the module $\mc F_{s_{i+1}(r\La_{i+1}-(r+1)\La_{i})}(u)$ is given by the $\hat I$-tuple $$(1,\dots,1,\psi_{-s_{i+1}(r+1)}(q_2^{-s_{i+1}r}u/z),\psi_{s_{i+1}r}(q_1^{-s_{i+1}}u/z),1,\dots,1),$$ where the non-trivial functions are at the $i$-th and $(i+1)$-st positions.

\subsection{The case of \texorpdfstring{$\mc F_{s_{i}(r\La_{i-1}-(r+1)\La_{i})}$}{F,r,i,i+1}}\label{sec non pure fock 2}

The construction of the previous section can also be repeated using tensor products of pure Fock modules and covector representations yielding more modules.

We again start with the case of $i=0$. Set $s=s_0$. Let $r\in\Z_{>0}$.

The modules $\mc F_{\La_{0}}(u)$ and $\mc F_{-\La_{0}}(u)$ are constructed in Theorems \ref{pure thm 1}, \ref{pure thm 2} using tensor product in positive and negative directions, respectively. We multiply it by an extra tensor product of covector representations in the same direction. Namely,  consider:
$$
U=W(u)\otimes W(uq_2^{-s})\otimes\cdots\otimes W(uq_2^{s(1-r)})\otimes \mc F_{-s\La_{0}}(uq_2^{-sr}).
$$
The basis of $U$ is given by
$$
\ket{\bs\lambda,(\bs\nu,\bs\mu)}_{0,r}^{-s}=[u]^{-\nu_1}\otimes[uq_2^{-s}]^{-\nu_{2}}\otimes\dots\otimes [uq_2^{s(1-r)}]^{-\nu_{r}}\otimes \ket{\bs\lambda,\bs\mu}_{0}^{-s}.
$$
We consider a subspace $\mc F(u)\subset U$,
$$\mc F(u)=\langle \ket{\bs\lambda,(\bs\nu,\bs\mu)}_{0,r}^{-s}\ |\ (\bs\nu,\bs\mu)\ {\text {is a }} \s^-_- {\text  {-partition if }} s_0=-1 {\text { and }} \s^+_- {\text  {-partition if }} s_0=1 \rangle.
$$

\begin{thm}\label{non pure 2 0,r}
The space $\mc F(u)$ is an irreducible tame highest weight $\Es$-module with highest weight vector $\ket{\bs\emptyset,(\bs\emptyset,\bs\emptyset)}_{0,r}^{-s}$ and highest weight   $(\psi_{-s(r+1)}(q_2^{-sr}u/z),1,\dots,1,\psi_{sr}(q_3^{-s}u/z))$.
\end{thm}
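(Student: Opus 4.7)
The plan is to follow the outline of Theorem \ref{non pure 1 0,r}, replacing vector representations $V$ with covector representations $W$ throughout. One can alternatively deduce the statement from Theorem \ref{non pure 1 0,r} via the isomorphism $\tau_\s$ of Lemma \ref{lem tau}, which exchanges vector and covector representations (Lemma \ref{lem:V-W}) and relabels nodes $i\mapsto -i$; this automatically swaps the positions of the two non-trivial components in the highest weight, giving exactly the positions $0$ and $N-1$ as stated. I will sketch the direct approach.

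First, I would verify that the iterated coproduct \eqref{coproduct} defines a well-defined action on $\mc F(u)\subset U$. The analysis reduces to checking that no rational matrix coefficient has a pole at the support of the corresponding delta function when acting on admissible basis vectors. There are three types of adjacent pairs to treat: two covector factors $W(uq_2^{-sj})\otimes W(uq_2^{-s(j+1)})$, which fall under Lemma \ref{lem WW}(iv)--(v); the junction $W(uq_2^{s(1-r)})\otimes \mc F_{-s\La_{0}}(uq_2^{-sr})$; and interactions inside $\mc F_{-s\La_{0}}(uq_2^{-sr})$, which were handled in Section \ref{sec pure fock 2}. The $\s^{\mp}_{-}$-partition condition imposed on $(\bs\nu,\bs\mu)$ is precisely the one that cuts $U$ down to the irreducible submodule pattern of Lemma \ref{lem WW}(iv)--(v), so that the potentially singular matrix coefficients in the first two types either vanish or remain regular.

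Next, I would compute the highest weight directly on $\ket{\bs\emptyset,(\bs\emptyset,\bs\emptyset)}_{0,r}^{-s}$. Each Cartan current $K_i^\pm(z)$ acts diagonally as a product of eigenvalues on the tensor factors. For $i\not\equiv 0,-1\pmod{N}$ every factor contributes $1$. For $i\equiv 0$, each $[uq_2^{-sj}]^{0}$ contributes $\psi_{-s}(q_2^{-sj}u/z)$ for $j=0,\dots,r-1$, and the Fock factor contributes $\psi_{-s}(q_2^{-sr}u/z)$. For $i\equiv -1$, each $[uq_2^{-sj}]^{0}$ contributes $\psi_{s}(q_3^{-s}q_2^{-sj}u/z)$, and the Fock factor contributes $1$. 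Applying the identity $\psi_{l}(q_2^{-k}z)\psi_{k}(z)=\psi_{k+l}(z)$ iteratively (starting from the factor with largest shift and folding in successively), these products telescope to $\psi_{-s(r+1)}(q_2^{-sr}u/z)$ at position $0$ and $\psi_{sr}(q_3^{-s}u/z)$ at position $N-1$, which matches the asserted highest weight.

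Finally, tameness and irreducibility follow the pattern of Theorem \ref{pure thm 1}. Each basis vector is a joint eigenvector of all $K_i^\pm(z)$, and by the genericity of $q_1,q_2$ these eigenvalues are pairwise distinct, so $\mc F(u)$ is tame. Any submodule is therefore weighted and spanned by a subset of the basis; one checks using explicit matrix coefficients of $E_i(z)$ and $F_i(z)$ that every admissible basis vector connects through non-zero transitions to $\ket{\bs\emptyset,(\bs\emptyset,\bs\emptyset)}_{0,r}^{-s}$ by a sequence of box additions and removals, which forces irreducibility. The principal obstacle is the well-definedness step: one must carefully align the pole/zero structure coming from the covector tail with the $\psi$-factor modification of $E_0^{(k)}$ and $(K_0^\pm)^{(k)}$ used in Section \ref{sec pure fock 2} to build $\mc F_{-s\La_0}$, verifying that this modification continues to cancel the only potential boundary pole after the extra covector factors have been appended.
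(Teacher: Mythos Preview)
Your proposal is correct and follows essentially the same approach as the paper, which simply states ``The proof is done by a direct check.'' Your sketch supplies the details of that direct check (well-definedness via Lemmas \ref{lem WW} and the analysis of Section \ref{sec pure fock 2}, the telescoping computation of the highest weight, and tameness/irreducibility as in Theorem \ref{pure thm 1}), and also notes the alternative route through $\tau_\s$, which the paper does not invoke here but is a perfectly valid shortcut.
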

\begin{proof}
The proof is done by a direct check.
\end{proof}
The finite weight of the highest weight vector is ${s_{0}(r\La_{-1}-(r+1)\La_{0})}$. Therefore, we denote $\mc F(u)$ constructed in Theorem \ref{non pure 2 0,r} by $\mc F_{s_{0}(r\La_{-1}-(r+1)\La_{0})}(u)$.

\medskip 

With obvious changes, we construct the module $\mc F_{s_{i}(r\La_{i-1}-(r+1)\La_{i})}(u)$,  for each $i\in \hat I$. 

The basis of $\mc F_{s_{i}(r\La_{i-1}-(r+1)\La_{i})}(u)$
is given by 
$$\ket{\bs\lambda,(\bs\nu,\bs\mu)}_{i,r}^{-s_i}:=[uq_3^{-\overline{i}}]^{i-\nu_1}\otimes [uq_3^{-\overline{i}}q_2^{-s_i}]^{i-\nu_2}\otimes \cdots \otimes [uq_3^{-\overline{i}}q_2^{s_i(1-r)}]^{i-\nu_r}\otimes \ket{\bs\lambda,\bs\mu}_{i}^{-s_i}, $$
where $\ket{\bs\lambda,\bs\mu}_{i}^{-s_i}\in \mc F_{-s_i\La_i}(uq_2^{-s_ir})$, $\nu_j\in\Z_{\geq 0}$ are such that $\nu_1\geq\nu_2\geq \dots\geq \nu_r$ with equality $\nu_j=\nu_{j+1}$ allowed if $s_{i-\nu_j}=s_i$, and such that $\nu_r\geq \mu_1$ with equality allowed if $s_{i-\mu_1}=s_i$.

The highest weight of the module  $\mc F_{s_{i}(r\La_{i-1}-(r+1)\La_{i})}(u)$ is given by the $\hat I$-tuple $$(1,\dots,1,\psi_{s_ir}(q_3^{-s_i}u/z),\psi_{-s_i(r+1)}(q_2^{-s_ir}u/z),1,\dots,1),$$ where the non-trivial functions are at the $(i-1)$-st and $i$-th positions.

\section{MacMahon Modules}\label{MacMahon sec}

\subsection{Tensor products of Fock modules}
First, we study tensor products of Fock modules. A generic tensor product of Fock modules is irreducible. We are interested in non-generic cases.

The Fock module $\mc F_{\Lambda_k}(u)$ is constructed using tensor products in positive direction. We consider the space
$\mc F_{\Lambda_k}(u)\otimes \mc F_{\Lambda_k}(uq_2^{-a})$, where $a\in\Z_{\geq 1}$. Thus, as in the previous cases, we cannot use the  coproduct  \eqref{coproduct} to define an $\Es$-module structure on the whole tensor product, however, we can do it on a subspace.

\begin{lem}\label{lem: tensor fock +} Assume $s_k=-1$.
The subspace of  $\mc F_{\Lambda_k}(u)\otimes \mc F_{\Lambda_k}(uq_2^{-a})$  spanned by 
$$
\ket*{\bs \la^{(1)},\bs \mu^{(1)}}_k^+\otimes \ket*{\bs \la^{(2)},\bs \mu^{(2)}}_k^+,
$$
with $\la^{(1)}_{i}\geq \la^{(2)}_{i+a-1}$, where the equality is allowed only if $s_{\la^{(1)}_i+k}=1$ or $\la_i^{(1)}=0$, and $\mu^{(1)}_{i}\geq \mu^{(2)}_{i+a-1}$, where the equality is allowed only if $s_{-\mu^{(1)}_i+k}=1$ or $\mu^{(1)}_i=0$, and where $\ell(\bs \la^{(2)})\leq \ell (\bs \mu^1)+a-1$, is 
an irreducible tame highest weight $\Es$-module with highest weight $(1,\dots, 1,\psi(u/z)\psi(q_2^{-a}u/z),1,\dots,1)$, where the non-trivial function is in the $k$-th position. 
\end{lem}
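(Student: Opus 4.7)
The strategy parallels the construction of the single Fock modules carried out in Theorems \ref{pure thm 1} and \ref{pure thm 2}. Namely, one must (i) verify that the iterated coproduct action is well-defined on the described subspace, (ii) show that this subspace is invariant under the action, (iii) establish the highest weight property, (iv) prove tameness, and (v) deduce irreducibility. Since both Fock modules themselves are built as stabilized inductive limits of finite tensor products with a regularized action of $K_0^\pm(z)$ and $E_0(z)$, I would first reduce to checking the claims at each finite stage and then use the stabilization of Lemma \ref{stable lem}.

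The first step is to locate the potentially undefined matrix coefficients. The only problematic terms in $\Delta(E_i(z))$ and $\Delta(F_i(z))$ are $E_i(z)\otimes K_i^-(z)$ and $K_i^+(z)\otimes F_i(z)$, where a delta function coming from the first factor may land on a pole of the $K_i^\pm(z)$-eigenvalue on the second factor. Because both tensor products defining the two Fock modules are in positive direction with evaluation parameters shifted by $q_2^{-a}$, such clashes occur exactly along the relative positions indexed by $i \mapsto i+a-1$. I would then check that the conditions $\la^{(1)}_i\geq \la^{(2)}_{i+a-1}$, $\mu^{(1)}_i\geq \mu^{(2)}_{i+a-1}$ with the stated parity constraints, together with $\ell(\bs\la^{(2)})\leq \ell(\bs\mu^{(1)})+a-1$, are exactly the combinatorial translation of "no uncancelled pole occurs", by comparing with the explicit list of poles \eqref{E poles 1}--\eqref{F poles 2} and the $K$-eigenfunctions in the vector and covector factors.

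The second step is closure of the subspace under the action. Here I would use the zero structure of the matrix coefficients, exactly as in the proof of Lemma \ref{U^k lem}: whenever the action would produce a basis vector violating one of the boundary inequalities (that is, producing an equality without the allowed parity), an additional zero from some $E_i(z)\otimes K_i^-(z)$ or $K_i^+(z)\otimes F_i(z)$ factor kills that matrix coefficient. The length condition requires analogous vanishings when boxes are removed from $\bs\la^{(2)}$ past the $(\ell(\bs\mu^{(1)})+a-1)$-st row.

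For the highest weight property, using $\Delta(K_i^\pm(z))=K_i^\pm(z)\otimes K_i^\pm(z)$, one computes directly that $\ket*{\bs\emptyset,\bs\emptyset}_k^+\otimes \ket*{\bs\emptyset,\bs\emptyset}_k^+$ has weight $\psi(u/z)\psi(q_2^{-a}u/z)$ at position $k$ and $1$ elsewhere, and is annihilated by every $E_i(z)$ (since each factor's highest weight vector is annihilated and the cross-terms vanish by the weight count). Tameness follows because each Fock module is tame and genericity of $q_1,q_2$ prevents weight collisions between tensor-product basis vectors; with tameness in hand, irreducibility reduces to showing that every basis vector of the subspace is connected to the highest weight vector by a chain of $E_i(z)$/$F_i(z)$ applications with non-vanishing matrix coefficients, which follows from the explicit form of the Pieri-type action and the combinatorial connectedness of the admissible configurations.

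The main obstacle I anticipate is step (ii): the simultaneous verification that the three conditions defining the subspace (the two shifted dominance conditions on $\bs\la$ and $\bs\mu$, plus the length condition) are exactly matched to the zeros of matrix coefficients, neither too restrictive nor too loose. This requires a careful case analysis depending on the parity of the color $s_{k+\la^{(1)}_i}$ versus $s_k=-1$ and on whether the critical row is at the boundary $\ell(\bs\mu^{(1)})+a-1$. The assumption $s_k=-1$ is used precisely to make the vector-representation tails $[uq_2^j]_{k-1}$ dominate in the regularization, so without it the length condition would have to be modified.
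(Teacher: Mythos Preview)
Your proposal is correct and follows essentially the same approach as the paper: reduce to $k=0$, unravel each Fock factor into its constituent $V(uq_2^{j-1})$ and $W(uq_2^{j-1})$ pieces, and check case by case that every potential pole of $E_i(z)\otimes K_i^-(z)$ or $K_i^+(z)\otimes F_i(z)$ either does not occur on the described subspace or is cancelled by an accompanying zero, while the remaining zeros enforce invariance. One small recalibration: the paper spends almost all of its effort on your step (i), the pole analysis, and declares the zero-matrix-coefficient invariance of step (ii) ``straightforward''; so the anticipated main obstacle is really the careful bookkeeping of which configurations $[uq_2^{j-1}]_{\bullet}\otimes [uq_2^{j-1}]_{\bullet}$ can arise and how the inequalities $\la^{(1)}_j\geq\la^{(2)}_{j+a-1}\geq\la^{(2)}_{j+a}$ (and their $\mu$ and mixed analogues) force the cancellations.
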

\begin{proof}
Without loss of generality, let $k=0$.

Recall that the Fock space $\mc F_{\Lambda_0}(u)$ is realized as a subspace of tensor products of $V(uq_2^{j-1})$ and $W(uq_2^{j-1})$ with $j\in\Z_{> 0}$.  

We study the poles and the non-trivial zeros of matrix coefficients.
There are four cases: $E_i(z)$ acting on $V(uq_2^{j-1})$ or $W(uq_2^{j-1})$ coming from the first Fock space, and $F_i(z)$ acting on $V(uq_2^{j-1})$ or $W(uq_2^{j-1})$ coming from the second Fock space.

Consider  $F_i(z)$ acting on $V(uq_2^{j-1})$ coming from the second space. The possible poles come from
$$
[uq_2^{j-1}]_{i-1}\otimes [uq_2^{j-1}]_{i-1}, \qquad [uq_2^{j-1}]_i\otimes [uq_2^{j-1}]_{i-1},\qquad [uq_2^{j_1-1}]^i\otimes [uq_2^{j-1}]_{i-1},\qquad [uq_2^{j_1-1}]^{i+1}\otimes [uq_2^{j-1}]_{i-1},
$$
where the second factor comes from the second Fock space.

Note that the fourth case is not present in the subspace, since $\mu^{(1)}_{j_1}= -i-1\geq 0$ and $\la^{(2)}_{j+a}-1=i-1\geq -1$.
Also, the first and third cases force $i=0$, $j_1=j$. Indeed, for the third case we need $i\leq 0$ and $i-1\geq -1$, and for the first case both inequalities $\la^{(1)}_{j}\geq \la^{(2)}_{j+a-1} \geq \la^{(2)}_{j+a}$ are equalities, which is possible only if $\lambda_{j}^{(1)}-1=i=-1$.

Consider the third case, $[uq_2^{j-1}]^0\otimes [uq_2^{j-1}]_{-1}$. Then, $[uq_2^{j}]^0$ and $[uq_2^{j-2}]_{-1}$ must be present. A second pole could be coming from $[uq_2^{j-1}]_{-1}$ (the first case), but in this case, $[uq_2^{j}]_{-1}$ must also be present. In both cases, the total matrix coefficient is zero.

Consider the second case, $[uq_2^{j-1}]_i\otimes [uq_2^{j-1}]_{i-1}$. Then, $\la^{(2)}_{j+a-1}$ is either $i$ or $i+1$. If $\la^{(2)}_{j+a-1}=i$, we must have $s_i=s_{i+1}=-1$, and $[uq_2^{j}]_{i}$ and $[uq_2^{j-2}]_{i-1}$ must be present.  If $\la^{(2)}_{j+a-1}=i+1$, we must have $s_i=s_{i+1}=1$, and $[uq_2^{j}]_{i-1}$ and $[uq_2^{j-2}]_{i}$ must be present. In both cases, the total matrix coefficient is zero.

\medskip 

Let us now look at $E_i(z)$ acting on $W(uq_2^{j-1})$ coming from the first space. The possible poles come from
$$
[uq_2^{j-1}]^{i}\otimes [uq_2^{j-1}]^{i}, \qquad [uq_2^{j-1}]^i\otimes [uq_2^{j-1}]^{i+1},\qquad [uq_2^{j-1}]^i\otimes [uq_2^{j_1-1}]_{i},\qquad [uq_2^{j-1}]^{i}\otimes [uq_2^{j_1-1}]_{i-1},
$$
where the second factor comes from the second Fock space.

The first and fourth cases happen only if $i=0$ and $j_1=j$, and the third case only if $i=-1$ and $j_1=j-1$. The third case could also happen if $i=0$ and $j_1=j$, but this means that $\mu^{(1)}_j=0$ and $\la^{(2)}_{j+a}=1$, but $\mu^{(1)}_j=0$ implies $\la^{(2)}_{j+a-1}=0$.

Consider the first case, $[uq_2^{j-1}]^{0}\otimes [uq_2^{j-1}]^{0}$.  A second pole could be coming from $[uq_2^{j-1}]_{-1}$ (fourth case). But in both cases, we must have two factors $[uq_2^{j}]^{0}$, and $\mu^{(1)}_j=0$ implies $\la^{(2)}_{j+a-1}=0$. Thus, $[uq_2^{j-2}]_{-1}$ is also present, which makes the matrix coefficient zero.

Consider the second case, $[uq_2^{j-1}]^i\otimes [uq_2^{j-1}]^{i+1}$. Then, $\mu^{(2)}_{j+a-1}=-i$ or  $\mu^{(2)}_{j+a-1}=-i-1$. If $\mu^{(2)}_{j+a-1}=-i$, we must have $s_i=s_{i+1}=1$, and $[uq_2^{j}]^{i+1}$ and $[uq_2^{j-2}]^{i}$ must be present. If $\mu^{(2)}_{j+a-1}=-i-1$, we must have $s_i=s_{i+1}=-1$, and $[uq_2^{j}]^{i}$ and $[uq_2^{j-2}]^{i+1}$ must be present. In both cases, the total matrix coefficient is zero.

Consider the third case, $[uq_2^{j-1}]^{-1}\otimes [uq_2^{j-2}]_{-1}$. Then, $[uq_2^{j-2}]^{0}$ must be present, which makes the matrix coefficient well-defined.

The other checks are done similarly.

\medskip

Finally, we note that, in the absence of the poles, the existence of the zero matrix coefficients keeping the subspace preserved is straightforward.

\end{proof}

If $s_k=1$, one can also prove an analog of Lemmas \ref{lem: tensor fock +}. The only change is that in this case there is no conditions $\ell(\bs \la^{(2)})\leq \ell (\bs \mu^1)+a-1$. We do not need this case for the construction of MacMahon modules.

\medskip

The Fock module $\mc F_{-\Lambda_k}(u)$ is constructed using tensor products in negative direction. We consider the space
$\mc F_{-\Lambda_k}(u)\otimes \mc F_{-\Lambda_k}(uq_2^{a})$, where $a\in\Z_{\geq 1}$.

\begin{lem}\label {lem: tensor fock -} Assume $s_k=1$. 
The subspace of  $\mc F_{-\Lambda_k}(u)\otimes \mc F_{-\Lambda_k}(uq_2^{a})$  spanned by 
$$
\ket*{\bs \la^{(1)},\bs \mu^{(1)}}_k^-\otimes \ket*{\bs \la^{(2)},\bs \mu^{(2)}}_k^-,
$$
with $\la^{(1)}_{i}\geq \la^{(2)}_{i+a-1}$, where the equality is allowed only if $s_{\la^{(1)}_i+k}=-1$ or $\la_i^{(1)}=0$, and $\mu^{(1)}_{i}\geq \mu^{(2)}_{i+a-1}$, where the equality is allowed only if $s_{-\mu^{(1)}_i+k}=-1$ or $\mu^{(1)}_i=0$, and where $\ell(\bs \la^{(2)})\leq \ell (\bs \mu^{(1)})+a-1$, is 
an irreducible tame highest weight $\Es$-module with highest weight $(1,\dots, 1,\psi(z/u)\psi(q_2^{-a}z/u),1,\dots,1)$, where the non-trivial function is in the position $k$. 
\end{lem}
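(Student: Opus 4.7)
The plan is to mirror the proof of Lemma \ref{lem: tensor fock +} with the roles of positive and negative directions interchanged. Without loss of generality I reduce to $k=0$ (so $s_0=1$), and realize $\mc F_{-\Lambda_0}(u)$ as a subspace of the negative-direction tensor product $V(u)\otimes W(u)\otimes V(uq_2^{-1})\otimes W(uq_2^{-1})\otimes\cdots$ as in Section \ref{sec pure fock 2}. Writing a vector in the purported submodule as $\ket*{\bs \la^{(1)},\bs \mu^{(1)}}_0^-\otimes \ket*{\bs \la^{(2)},\bs \mu^{(2)}}_0^-$, the conditions $s_{\la^{(1)}_i} = -1$ (resp.\ $s_{-\mu^{(1)}_i}=-1$) at equality are the exact mirror of the conditions in Lemma \ref{lem: tensor fock +}, reflecting that the $\s^+_+$ and $\s^+_-$ partition constraints have replaced $\s^-_+$ and $\s^-_-$. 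The extra inequality $\ell(\bs\la^{(2)})\le\ell(\bs\mu^{(1)})+a-1$ is the same constraint as before and enforces that the second Fock ``runs behind'' the first by at least $a$ rows at the colorless bottom boundary.

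The core of the argument is the pole/zero analysis of matrix coefficients across the tensor product, which I would carry out in the same four cases as in Lemma \ref{lem: tensor fock +}: (a) $E_i(z)$ acting on a factor $V(uq_2^{-(j-1)})$ from the first Fock space, (b) $E_i(z)$ acting on $W(uq_2^{-(j-1)})$ from the first, (c) $F_i(z)$ acting on $V(uq_2^{-(j-1)})$ from the second, and (d) $F_i(z)$ acting on $W(uq_2^{-(j-1)})$ from the second. For each, I would enumerate the finitely many local patterns of the form $[uq_2^{-(j-1)}]_*\otimes[uq_2^{-(j_1-1)}]_*$ or $[uq_2^{-(j-1)}]_*\otimes[uq_2^{-(j_1-1)}]^*$ that can produce an unwanted pole of the rational prefactor at the support of the delta function; then use the subspace conditions to show that whenever such a pattern occurs, a neighboring factor forces the total matrix coefficient to vanish (or the pattern itself is excluded by the subspace definition). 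The existence of corresponding zeros, which is automatic once there is no pole, then shows that the subspace is stable under $E_i(z), F_i(z), K^\pm_i(z)$.

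The main obstacle will be bookkeeping the sign flips. Every occurrence of $q_2$ in the positive-direction analysis becomes $q_2^{-1}$, and every $s_\star=1$ constraint becomes $s_\star=-1$; the eigenfunctions $\psi$ of $K^\pm_i(z)$ also flip to $\psi_{-1}$, so the identities $\psi(q^2)=0$ used to kill matrix coefficients must be replaced by $\psi_{-1}(q^{-2})=0$. Once the pole/zero analysis is complete, the rest is routine: the highest weight vector $\ket*{\bs\emptyset,\bs\emptyset}_k^-\otimes\ket*{\bs\emptyset,\bs\emptyset}_k^-$ has weight equal to the product of the two factor weights, which is $(1,\dots,1,\psi_{-1}(u/z)\psi_{-1}(q_2^{-a}u/z),1,\dots,1)$ in the $k$-th slot, i.e.\ $(1,\dots,1,\psi(z/u)\psi(q_2^{-a}z/u),1,\dots,1)$; tameness follows from tameness of the two Fock factors combined with the fact that the $K^\pm_i(z)$-weights on the subspace separate distinct basis vectors; and irreducibility follows from the standard argument that any submodule is spanned by weight vectors and is closed under the matrix coefficients of $E_i(z)$ and $F_i(z)$, which connect all basis vectors to the highest weight vector. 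A potentially cleaner alternative would be to deduce the lemma directly from Lemma \ref{lem: tensor fock +} by twisting through the isomorphism $\tau_\s$ of Lemma \ref{lem tau}, exchanging $\gl_{m|n}$ and $\gl_{n|m}$ and inverting the $q_i$; this turns positive-direction tensor products into negative-direction ones and swaps $s_k=-1$ with $s_k=1$, at the cost of identifying the image of $\mc F_{\Lambda_k}(u)$ under $\tau_\s$ with $\mc F_{-\Lambda_{-k}}(u)$ for the transformed algebra.
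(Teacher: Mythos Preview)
Your proposal is correct and matches the paper's approach: the paper's proof is simply the one-line remark that Lemma \ref{lem: tensor fock -} is proved similarly to Lemma \ref{lem: tensor fock +}, and your outline is precisely the expected mirror of that argument. (One small slip: the intermediate expression should read $\psi_{-1}(q_2^{a}u/z)$ rather than $\psi_{-1}(q_2^{-a}u/z)$, since the second factor is $\mc F_{-\Lambda_k}(uq_2^{a})$; your final rewriting as $\psi(q_2^{-a}z/u)$ is nonetheless correct.)
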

\begin{proof}
Lemma \ref{lem: tensor fock -} is proved similarly to Lemma \ref{lem: tensor fock +}.
\end{proof}

If $s_k=-1$, one can also prove an analog of Lemmas \ref{lem: tensor fock -}. The only change is that in this case there is no conditions $\ell(\bs \la^{(2)})\leq \ell (\bs \mu^{(1)})+a-1$. 

\medskip

One can prove similar lemmas for other tensor products of Fock spaces. We do not give these lemmas here as it does not add anything extra to our main goal of constructing the MacMahon modules.

\subsection{Vacuum MacMahon modules}
Let $s_0=-1$. We construct the vacuum MacMahon module by a procedure similar to the one used for the construction of the Fock modules, cf. Section \ref{sec pure fock 1}. See also \cite{FJMM}.

We consider
$$
\mc F_{\Lambda_0}(u)\otimes \mc F_{\Lambda_0}(uq_2^{-1})\otimes \mc F_{\Lambda_0}(uq_2^{-2})\otimes \dots \otimes \mc F_{\Lambda_0}(uq_2^{-k+1}).
$$
Note that this tensor product has negative direction, while the Fock module $\mc F_{\Lambda_0}(u)$ is constructed from tensor product with positive direction.

This space has a basis given by
$$
\ket*{\vec{\bs \lambda},\vec{\bs \mu}}^{(k)}=\ket*{\bs \lambda^{(1)},\bs \mu^{(1)}}_0^+\otimes \ket*{\bs \lambda^{(2)},\bs \mu^{(2)}}_0^+ \otimes \dots \otimes \ket*{\bs \lambda^{(k)},\bs \mu^{(k)}}_0^+.
$$

Given two pairs of partitions $(\bs\lambda^{(1)},\bs\mu^{(1)})$, $(\bs\lambda^{(2)},\bs \mu^{(2)})$ and $j\in\hat I$,
we say $(\bs\lambda^{(1)},\bs\mu^{(1)})\geq_j (\bs\lambda^{(2)},\bs \mu^{(2)})$ if the following three conditions are met.
\begin{itemize}
 \item We have $\ell(\bs\lambda^{(2)})\leq \ell(\bs\mu^{(1)}), \quad \ell(\bs\lambda^{(2)})\leq \ell(\bs\lambda^{(1)}), \quad \ell(\bs\mu^{(2)})\leq \ell(\bs\mu^{(1)})$.

\item For $i=1,\dots,\ell(\bs\lambda^{(2)}),\ $ we have the inequality $\lambda^{(1)}_i\geq \lambda^{(2)}_i$, where equality is allowed if either $s_{\lambda^{(1)}_i+j}=-s_j\ {\text {or}}\  \lambda_i^{(1)}=0$.  
    
\item For $i=1,\dots,\ell(\bs\mu^{(2)}),$ we have  the inequality
    $\mu^{(1)}_i\geq \mu^{(2)}_i,$  where equality is allowed if either $s_{-\mu^{(1)}_i+j}=-s_j$ or  $\mu_i^{(1)}=0$.
\end{itemize}

The relation $\geq_j$ is transitive in the following sense.
If  $(\bs\lambda^{(1)},\bs\mu^{(1)})\geq_j(\bs\lambda^{(2)},\bs \mu^{(2)})$ and $(\bs\lambda^{(2)},\bs\mu^{(2)})\geq_j(\bs\lambda^{(3)},\bs \mu^{(3)})$ then we have
$(\bs\lambda^{(1)},\bs\mu^{(1)})\geq_j(\bs\lambda^{(3)},\bs \mu^{(3)})$ provided $\ell(\bs\mu^{(2)})\leq \ell (\bs\la^{(2)})$.

The relation $\geq_j$ is not reflexive in general. We call a pair $(\bs \lambda,\bs \mu)$ $j$-self-comparable if $(\bs \lambda,\bs \mu)\geq_j (\bs \lambda,\bs \mu)$. In particular, $(\bs\emptyset,\bs\emptyset)$ is $j$-self-comparable.

We set 
\begin{equation}\label{3d condition}
\mc M^{(k)}(u)=\langle  \ket*{\vec{\bs \lambda},\vec{\bs \mu}}^{(k)}\ |\ (\bs\lambda^{(i)},\bs\mu^{(i)})\geq_0 (\bs\lambda^{(i+1)},\bs \mu^{(i+1)}) , \ i=1,\dots,k-1 \rangle.
\end{equation}

Note that $\mc M^{(k)}(u)$ is an $\Es$-module with action given by the coproduct $\eqref{coproduct}$. However, this action is not stable under the change of $k$. We can make it stable on a subspace as follows.

Let $k>3$ and set
$$
\mathring{\mc M}^{(k)}(u)=\langle \ket*{\vec{\bs \lambda},\vec{\bs \mu}}^{(k)}\in \mc M^{(k)}(u)\ |\   \bs\lambda^{(j)}=\bs\mu^{(j)}=\emptyset,\ j=k-2,k-1,k \rangle. 
$$

For $r\in \Z$ and $K\in \Cx$, let
\begin{align}\label{f Macmahon}
    f^{(r)}(K,z)=\dfrac{K-K^{-1}z}{q^r-q^{-r}z}.
\end{align}

Set
$$
F_i^{(k)}(z)=\Delta^{(k)}F_i(z) \qquad (i\in\hat I),
$$
and
$$
E_i^{(k)}(z)=\begin{cases}
    f^{(k)}(K,u/z) \Delta^{(k)} E_0(z)  &(i=0);\\
    \Delta^{(k)} E_i(z)  &(i\in I),
\end{cases}  \qquad 
(K_i^{\pm})^{(k)}(z)=\begin{cases}
    f^{(k)}(K,u/z) \Delta^{(k)} K_0^\pm(z)  &(i=0);\\
    \Delta^{(k)} K_i^\pm (z)  &(i\in I),
\end{cases}
$$
where $\Delta^{(k)}$ is the $k$-th iteration of the coproduct \eqref{coproduct}.

Now, we have the following lemma.
\begin{lem}\label{M^k lem}
Let $v\in \mathring{\mc M}^{(k)}$. Then $E_i^{(k)}(z)v$,  $F_i^{(k)}(z)v$,  $(K_i^\pm)^{(k)}(z)v$, are well defined vectors in $\mc M^{(k)}$. Moreover,
these operators satisfy the relations \eqref{relCK}-\eqref{Serre6} on $v$.
\end{lem}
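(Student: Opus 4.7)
The proof strategy directly parallels that of Lemma \ref{U^k lem}: for $v \in \mathring{\mc M}^{(k)}$, we must show that the matrix coefficients of $E_i^{(k)}(z)$, $F_i^{(k)}(z)$, $(K_i^\pm)^{(k)}(z)$ applied to $v$ contain no uncancelled poles, that the result lies in $\mc M^{(k)}$, and that the defining relations \eqref{relCK}--\eqref{Serre6} are satisfied on $v$. The buffer of three trailing empty factors required by $\mathring{\mc M}^{(k)}$ provides a ``vacuum tail'' that absorbs the boundary effects of the truncation at finite $k$, and the scalar prefactor $f^{(k)}(K, u/z)$ is designed to cancel the residual pole that survives past this tail.

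For well-definedness, each matrix coefficient (via the iterated coproduct) is a delta function multiplied by a product of $K_i^\pm(z)$-eigenvalues from the remaining tensor factors. Potential poles arise only in two situations: between two consecutive non-empty Fock factors $\mc F_{\La_0}(uq_2^{-j})$ and $\mc F_{\La_0}(uq_2^{-j-1})$, or at the interface between the last non-empty factor and the vacuum tail. The first type is controlled, exactly as in Lemma \ref{lem: tensor fock +} applied with shift $a = 1$, by the pairwise condition $(\bs\lambda^{(j+1)},\bs\mu^{(j+1)}) \geq_0 (\bs\lambda^{(j+2)},\bs\mu^{(j+2)})$, which supplies compensating zeroes (transitivity of $\geq_0$ guarantees that only adjacent checks are needed). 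The second type leaves a residual pole equal to the telescoping product of $K_0^\pm(z)$-eigenvalues on the highest weight vectors of the three terminal empty Fock spaces; a direct computation using the highest weight of $\mc F_{\La_0}(u)$ from Theorem \ref{pure thm 1} shows this matches the denominator $q^k - q^{-k}(u/z)$ of $f^{(k)}(K, u/z)$, so the modification cancels it.

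To see that the output remains in $\mc M^{(k)}$, note that each non-vanishing action adds or removes a single box in one of the $\bs\lambda^{(j)}$ or $\bs\mu^{(j)}$, so one need only verify that the $\geq_0$ condition is preserved between the modified factor and its two immediate neighbors; this follows from the zero-coefficient analysis already carried out in Lemmas \ref{lem VV}, \ref{lem WW}, \ref{lem VW}, and \ref{lem: tensor fock +}. Finally, the relations \eqref{relCK}--\eqref{Serre6} hold because $\Delta^{(k)}$ is an iterate of an algebra homomorphism, and the scalar $f^{(k)}(K, u/z)$ is applied symmetrically to $E_0$ and $K_0^\pm$ with the same argument $z$: in \eqref{EF} the delta functions force $z = w$, so both $K_0^+$ and $K_0^-$ terms pick up the same factor $f^{(k)}(K, u/z)$ as the modified $E_0$; the relations \eqref{KK2}, \eqref{KE}, \eqref{KF} are preserved because $f^{(k)}(K,\cdot)$ is a scalar commuting with everything and appears with the same argument on each side; and the Serre relations \eqref{Serre1}--\eqref{Serre6} are preserved because each monomial in $E_0$ (or $F_0$) is rescaled uniformly by a product of $f^{(k)}$ factors independent of the ordering. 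The main technical obstacle is the tail computation in the second case of well-definedness, i.e.\ matching the telescoped eigenvalue exactly against the specific form of $f^{(k)}(K, u/z)$; once this identification is in hand, everything else reduces to the pairwise analyses already established.
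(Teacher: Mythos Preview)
Your proof follows the paper's approach, which simply declares the argument parallel to Lemma~\ref{U^k lem}. The expansion is largely sound, but your tail analysis has the direction of cancellation backwards. You write that the interface with the vacuum tail ``leaves a residual pole'' which the modification $f^{(k)}(K,u/z)$ then cancels. In fact the paper has already observed, just before stating the lemma, that the unmodified coproduct makes $\mc M^{(k)}(u)$ an honest $\Es$-module; so $\Delta^{(k)}E_0(z)v$ and $\Delta^{(k)}K_0^\pm(z)v$ carry no uncancelled poles on $\mathring{\mc M}^{(k)}$. The scalar $f^{(k)}(K,u/z)$ is introduced purely for stability under $k\mapsto k+1$ (Lemma~\ref{stable lem Macmahon}), and it is \emph{this} factor that brings a new pole, at $z=uq_2^{-k}$. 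What makes the modified operators well-defined on $\mathring{\mc M}^{(k)}$ is that the trailing vacuum factors contribute a \emph{zero} at exactly this point: already the last factor $\mc F_{\Lambda_0}(uq_2^{-k+1})$ has highest-weight $K_0^\pm$-eigenvalue $\psi(uq_2^{-k+1}/z)$, which vanishes at $z=uq_2^{-k}$, and this zero appears in every surviving term of $\Delta^{(k)}E_0(z)v$ since $E_0$ annihilates the vacuum tail. With this correction the rest of your argument --- that the subspace is preserved via the pairwise analysis of Lemma~\ref{lem: tensor fock +}, and that the common scalar rescaling of $E_0$ and $K_0^\pm$ preserves relations \eqref{relCK}--\eqref{Serre6} --- goes through as written.
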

\begin{proof}
The proof of Lemma \ref{M^k lem} is similar to the proof of Lemma \ref{U^k lem}.
\end{proof}
We have a natural embedding 
$$\phi^{(k)}:\ \mc M^{(k)}(u)\hookrightarrow \mc M^{(k+1)}(u), \qquad  \ket*{\vec{\bs \lambda},\vec{\bs \mu}}^{(k)} \mapsto  \ket*{\vec{\bs \lambda},\vec{\bs \mu}}^{(k)}\otimes \ket{{\bs \emptyset},{\bs \emptyset}}_0.$$
Note that $\phi^{(k)}(\mathring{\mc M}^{(k)})\subset \mathring{\mc M}^{(k+1)}$ and $\phi^{(k+2)}\circ \phi^{(k+1)}\circ \phi^{(k)}(\mc M^{(k)})=\mathring{\mc M}^{(k+3)}$.

Then, we have the following lemma.

\begin{lem}\label{stable lem Macmahon}
For any $v\in \mathring{\mc M}^{(k)}(u)$ and $i\in \hat I$, we have
\begin{align*}
 &\phi^{(k)} E_i^{(k)}(z)v=  E_i^{(k+1)}(z)\phi^{(k)}v, \\
 &\phi^{(k)} F_i^{(k)}(z)v=  F_i^{(k+1)}(z)\phi^{(k)}v, \\
 &\phi^{(k)} (K_i^\pm)^{(k)}(z)v=  (K^\pm_i)^{(k+1)}(z)\phi^{(k)}v.
\end{align*}
\end{lem}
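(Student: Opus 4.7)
The plan is to prove the lemma by a direct computation paralleling the proof of Lemma \ref{stable lem}. I would write $\phi^{(k)}(v)=v\otimes\ket{\bs\emptyset,\bs\emptyset}_0^+$, where the last factor is the vacuum of the newly appended Fock space $\mc F_{\Lambda_0}(uq_2^{-k})$, and then for each generator $X\in\{E_i(z),F_i(z),K_i^\pm(z)\}$ split $\Delta^{(k+1)}X=(\Delta^{(k)}\otimes\mathrm{id})\circ\Delta X$ into a term in which $X$ acts on $v$ (accompanied by a Cartan-eigenvalue factor on the new vacuum) and a term in which $X$ acts directly on the new vacuum.

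For the Cartan currents and for $E_i(z)$, the term acting on the new vacuum automatically vanishes because $E_i(z)\ket{\bs\emptyset,\bs\emptyset}_0^+=0$ (the empty partition contains no boxes available for removal). The remaining term carries a factor equal to the $K_i^\pm(z)$-eigenvalue on the vacuum, which is $\psi(uq_2^{-k}/z)$ for $i=0$ and $1$ for $i\neq 0$. The case $i\neq 0$ therefore yields stability immediately. For $i=0$ stability reduces to the rational identity
\begin{equation*}
f^{(k+1)}(K,u/z)\cdot\psi(uq_2^{-k}/z)=f^{(k)}(K,u/z),
\end{equation*}
which one verifies directly from \eqref{f Macmahon} and the formula $\psi(w)=(q-q^{-1}w)/(1-w)$ by clearing denominators.

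For $F_i(z)$ with $i\neq 0$, the situation is again easy: the admissibility condition $\ell(\bs\lambda)\ge\ell(\bs\mu)$ blocks adding a box to $\bs\mu$ from the empty partition, and only color $0$ can be added to $\bs\lambda$ from the empty partition, so $F_i(z)\ket{\bs\emptyset,\bs\emptyset}_0^+=0$. The only remaining case is $F_0(z)$, for which $F_0(z)\ket{\bs\emptyset,\bs\emptyset}_0^+$ is a nonzero delta function supported at $z=uq_2^{-k}$. The coefficient in the coproduct expansion is $\Delta^{(k)}K_0^+(z)v$; evaluated at $z=uq_2^{-k}$, the factor from the last Fock factor of $v$ (which is the vacuum of $\mc F_{\Lambda_0}(uq_2^{-k+1})$ since $v\in\mathring{\mc M}^{(k)}$) is $\psi(uq_2^{-k+1}/(uq_2^{-k}))=\psi(q^2)=0$. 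Hence the offending term vanishes and stability is restored.

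The main obstacle is identifying the correct vanishing mechanism in each case; once identified, the algebraic checks are routine. The essential inputs are the telescoping identity for $f^{(k)}$ displayed above and the single zero $\psi(q^2)=0$, which encodes the precise interaction between the trailing vacuum of $v$ and the newly appended vacuum.
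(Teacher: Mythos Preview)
Your proof is correct and follows essentially the same approach as the paper, which simply asserts that Lemma~\ref{stable lem Macmahon} is proved similarly to Lemma~\ref{stable lem}. You have spelled out the two key mechanisms that the paper leaves implicit: the telescoping identity $f^{(k+1)}(K,u/z)\,\psi(uq_2^{-k}/z)=f^{(k)}(K,u/z)$, which absorbs the vacuum $K_0^\pm$-eigenvalue, and the zero $\psi(q^2)=0$ coming from the last (empty) Fock factor of $v$, which kills the extra $F_0$-term.
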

\begin{proof}
Lemma \ref{stable lem Macmahon} is similar to Lemma \ref{stable lem}.
\end{proof}

Let $\mc M(u)$ be the inductive limit of $\mc M^{(k)}(u)$. The basis of $\mc M(u)$ is given by $\ket*{\vec{\bs \lambda},\vec{\bs \mu}}$ with only finitely many non-empty paritions $\bs\lambda^{(i)},\bs\mu^{(i)}$, such that $(\bs\lambda^{(i)},\bs\mu^{(i)})\geq_0 (\bs\lambda^{(i+1)},\bs \mu^{(i+1)})$ for all $i\in \Z_{>0}$. 

We define an action of $\Es$ on $\mc M(u)$ in the obvious way. Namely, for any $v\in\mc M(u)$, there exists $k$, such that $v\in \mathring{\mc M}^{(k)}$. Then $E_i(z)$, $F_i(z)$, $K_i^\pm(z)$ act on $v\in\mc M(u)$ as $E_i^{(k)}(z)$, $F_i^{(k)}(z)$, $(K_i^\pm)^{(k)}(z)$.

We say that $K$ is {\it generic} if $K\neq q_1^aq_2^b$, $a,b \in \Z$.

\begin{thm}\label{pure thm 1 Macmahon}
The space $\mc M(u)$ is an irreducible tame highest weight $\Es$-module with highest weight $(f^{(0)}(K,u/z),1,\dots,1)$. If $K$ is generic, it is irreducible and tame.
\end{thm}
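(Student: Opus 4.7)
The plan is to mirror the proof of Theorem \ref{pure thm 1}, exploiting the stabilization machinery already built. First, Lemma \ref{M^k lem} produces a well-defined $\Es$-action on the finite space $\mathring{\mc M}^{(k)}(u)$, and Lemma \ref{stable lem Macmahon} shows these actions are compatible with the embeddings $\phi^{(k)}$, so they descend to an action on the inductive limit $\mc M(u)$. The vacuum vector $\ket{\vec{\bs\emptyset}, \vec{\bs\emptyset}}$ is annihilated by every $E_i(z)$ because each Fock factor's vacuum is annihilated by $E_i(z)$ and the coproduct \eqref{coproduct} preserves this property (the $E_i(z) \otimes K_i^-(z)$ part acts by zero on the right vacuum, and $1 \otimes E_i(z)$ vanishes).

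Second, the highest weight is computed directly. For $i \neq 0$, the iterated coproduct together with the triviality of $K_i^\pm(z)$ on each Fock vacuum gives eigenvalue $1$. For $i = 0$, the iterated coproduct produces $\prod_{j=0}^{k-1}\psi(q_2^{-j}u/z)$; writing $\psi(q_2^{-j}u/z) = q(1 - q^{-2j-2}u/z)/(1 - q^{-2j}u/z)$, the product telescopes to $(q^k - q^{-k}u/z)/(1 - u/z) = \psi_k(u/z)$. Multiplication by the normalizer $f^{(k)}(K,u/z) = (K-K^{-1}u/z)/(q^k - q^{-k}u/z)$ exactly cancels the factor $q^k - q^{-k}u/z$, yielding the $k$-independent answer $f^{(0)}(K,u/z) = (K - K^{-1}u/z)/(1 - u/z)$. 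This both confirms stability (giving a second proof of Lemma \ref{stable lem Macmahon} on the vacuum) and produces the claimed highest weight.

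Third, tameness and irreducibility for generic $K$ follow from the tameness of each Fock factor. Since each $\ket{\vec{\bs\lambda}, \vec{\bs\mu}}$ is a tensor of $K_i^\pm(z)$-eigenvectors in the Fock factors, it is itself an eigenvector of $K_i^\pm(z)$ in $\mc M(u)$. Only the $i = 0$ component of the weight tuple depends on $K$ (through the scalar multiplier $f^{(0)}(K,u/z)$), and its zero/pole locations, together with the weights in slots $i \neq 0$, record the colored box positions of the underlying plane $\s$-partition via the Pieri formulas from Section \ref{sec pure fock 1}. Two distinct basis vectors can therefore share a weight only on a proper Zariski-closed subset of the $K$-line; a generic choice avoids this, yielding tameness. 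Irreducibility then reduces, as in Theorem \ref{pure thm 1}, to the combinatorial fact that any submodule of a tame module is spanned by basis vectors, while the Pieri-type matrix coefficients of $E_i(z)$ and $F_i(z)$, inherited from the Fock construction and respecting the $\geq_0$ condition by Lemma \ref{lem: tensor fock +}, connect any basis vector to the vacuum.

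I expect the main obstacle to be verifying that the subspace cut out by the admissibility conditions $(\bs\lambda^{(i)}, \bs\mu^{(i)}) \geq_0 (\bs\lambda^{(i+1)}, \bs\mu^{(i+1)})$ is exactly closed under the action, i.e.\ that no matrix coefficient from an admissible to an inadmissible configuration is non-zero and, conversely, that no poles survive on admissible ones. Lemma \ref{lem: tensor fock +} handles adjacent pairs of Fock factors, but $\Delta^{(k)} E_i(z)$ and $\Delta^{(k)} F_i(z)$ interact simultaneously with all factors through the $K_i^\pm(z)$ tails, so one must perform a careful case analysis of poles and compensating zeroes across the whole tensor product, analogous to but more intricate than the argument in Lemma \ref{U^k lem}; this is the step where the specific form of $f^{(k)}(K,u/z)$ and the transitivity of $\geq_0$ (when $\ell(\bs\mu^{(2)}) \le \ell(\bs\lambda^{(2)})$) must be used in concert.
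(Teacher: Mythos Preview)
Your proposal is essentially the paper's proof, which reads in full ``Theorem \ref{pure thm 1 Macmahon} is similar to Theorem \ref{pure thm 1}''; you have correctly unpacked what that means via Lemmas \ref{M^k lem} and \ref{stable lem Macmahon}, the telescoping computation of the highest weight, and the tame-implies-irreducible argument.

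One point deserves correction. Your Zariski argument for tameness does not work: the only $K$-dependence in the weight of any basis vector $\ket{\vec{\bs\lambda},\vec{\bs\mu}}$ is the common multiplicative factor $(K-K^{-1}u/z)$ in the $0$-slot, so two basis vectors either share their full weight tuple for every $K$ or for no $K$; there is no proper Zariski-closed locus to avoid. The paper instead proves tameness combinatorially at the end of Section \ref{sec: pictures}, by showing that the pole set of the $K_i^\pm(z)$-eigenvalues determines the bottom layer of the plane $\s$-partition, and then iterating. Genericity of $K$ enters there only to ensure the zero of $(K-K^{-1}u/z)$ does not cancel one of those diagnostic poles (and, separately, enters irreducibility because at special $K$ a matrix coefficient of $E_0(z)$ can vanish, producing the ``prohibited box'' submodules described later in Section \ref{sec: pictures}).
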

\begin{proof}
Theorem \ref{pure thm 1 Macmahon} is similar to Theorem \ref{pure thm 1}.
\end{proof}

Let now $s_0=1$. We construct the vacuum MacMahon module similarly. In particular, the direction of tensor products is reversed.
In this case, we consider the submodule of
$$
\mc F_{-\Lambda_0}(u)\otimes \mc F_{-\Lambda_0}(q_2u)\otimes \mc F_{-\Lambda_0}(uq_2^{2})\otimes \dots \otimes \mc F_{-\Lambda_0}(uq_2^{k-1}).
$$
described by the same condition \eqref{3d condition}. The submodules are stabilized when $k$ is increasing in the same way. The only difference is the change $f^{(k)}(K,u/z) \mapsto f^{(-k)}(K,u/z)$ in the renormalization of $E_0^{(k)}(z)$ and $(K_0^\pm)^{(k)}(z)$.

As the result, for every $i\in\hat I$, independent on the parity $s_i$,  we obtain the highest weight $\Es$-module with highest weight $(1,\dots,1, (K-K^{-1}u/z)/(1-u/z),1,\dots,1)$, where the non-trivial function is in the $i$-th position. We denote this module $\mc M_i(u)$ and call it the {\it MacMahon module}. The basis of $\mc M_i(u)$ is parameterized by the set
\begin{equation*}
\{  (\vec{\bs \lambda},\vec{\bs \mu}),{\text{where } } (\bs\lambda^{(j)},\bs\mu^{(j)})\geq_i (\bs\lambda^{(j+1)},\bs \mu^{(j+1)}) , \ j\in\Z_{>0} , \ {\text{and }} (\bs\lambda^{(j)},\bs\mu^{(j)})=(\bs\emptyset,\bs\emptyset) {\text{ for almost all } j} \}.
\end{equation*}

\subsection{MacMahon modules with vertical boundaries}
We can repeat the construction of the vacuum MacMahon module in a more general setting. This generalization depends on a pair of partitions with special properties. We call such partitions colorless self-comparable pairs of partitions.

Let $s_0=-1$.
Let $\ket{\bs \lambda, \bs \mu}_0\in\mc F_{\Lambda_0}$. Then $\bs \lambda$ is a $\s^-_+$-partition, $\bs \mu$ is a $\s^-_-$-partition, and $\ell(\bs \lambda)\geq\ell(\bs \mu)$.  For $i\in\hat I$, let 
\begin{align}\label{a and b}
a_i=\#\{j \ |\ \lambda_j\equiv i+1,\ j\leq \ell(\lambda)\},\qquad b_i=\#\{j \ |\ -\mu_j\equiv i,\ j\leq \ell(\lambda)\}.
\end{align}

In particular, a pair of partitions $(\bs \lambda, \bs \mu)$ is $0$-self-comparable if and only if $a_i=0$ whenever $s_{i+1}=s_0=-1$ and $b_i=0$ whenever $s_{i}=s_0=-1$. 

We call a pair of partitions $(\bs \lambda, \bs \mu)$ {\it  colorless} if $a_i=b_{i+1}$ for all $i\in\hat I$. It is not hard to see that $(\bs \lambda, \bs \mu)$ is colorless if and only if the number of boxes of each color in its diagram (cf. Figure \ref{s-tab}) is the same.

\medskip

Let $(\bs \gamma,\bs \epsilon)$ be a colorless $0$-self-comparable 
pair of partitions, with  $\ell(\bs \gamma)\geq \ell (\bs\epsilon)$.
We again consider the tensor product in negative direction
$$
\mc F_{\Lambda_0}(u)\otimes \mc F_{\Lambda_0}(uq_2^{-1})\otimes \mc F_{\Lambda_0}(uq_2^{-2})\otimes \dots \otimes \mc F_{\Lambda_0}(uq_2^{-k+1})
$$
and the subspace
\begin{equation}\label{}
\mc M^{(k)}_{(\bs \gamma,\bs\epsilon)} (u)=\langle  \ket*{\vec{\bs \lambda},\vec{\bs \mu}}^{(k)}\ |\ (\bs\lambda^{(i)},\bs\mu^{(i)})\geq_0 (\bs\lambda^{(i+1)},\bs \mu^{(i+1)}) , \ i=1,\dots,k-1,\   (\bs\lambda^{(k)},\bs\mu^{(k)})\geq_0 (\bs\gamma,\bs \epsilon) \rangle.
\end{equation}
This subspace is not preserved by the $\Es$-action. We renormalize the action and go to the inductive limit.

Let
$$
\mathring{\mc M}^{(k)}_{(\bs \gamma,\bs\epsilon)}(u)=\langle \ket*{\vec{\bs \lambda},\vec{\bs \mu}}^{(k)}\in \mc M^{(k)}_{(\bs \gamma,\bs\epsilon)}(u)\ |\   (\bs\lambda^{(j)},\bs\mu^{(j)})=(\bs \gamma,\bs\epsilon)\ j=k-2,k-1,k \rangle. 
$$

Set
$$
F_i^{(k)}(z)=\Delta^{(k)}F_i(z) \qquad (i\in\hat I),
$$
and

\begin{align*}
&E_i^{(k)}(z)=\begin{cases}
    f^{(k-\ell(\gamma))}(K,u/z)f^{(k)}_{0,(\bs \gamma,\bs\epsilon)}(u/z) \Delta^{(k)} E_0(z)  &(i=0);\\
   f^{(k)}_{i,(\bs \gamma,\bs\epsilon)}(u/z) \Delta^{(k)} E_i(z)  &(i\in I),
\end{cases}\\
&(K_i^{\pm})^{(k)}(z)=\begin{cases}
     f^{(k-\ell(\gamma))}(K,u/z)f^{(k)}_{0,(\bs \gamma,\bs\epsilon)}(u/z) \Delta^{(k)} K_0^\pm(z)  &(i=0);\\
    f^{(k)}_{i,(\bs \gamma,\bs\epsilon)}(u/z) \Delta^{(k)} K_i^\pm (z)  &(i\in I),
\end{cases}
\end{align*}
where $\Delta^{(k)}$ is the $k$-th iteration of the coproduct \eqref{coproduct}, $f^{(k-\ell(\gamma))}(K,u/z)$ is the function \eqref{f Macmahon}, and
\begin{align*}
    f^{(k)}_{i,(\bs \gamma,\bs\epsilon)}(z)=\prod_{\substack{j=1\\i\equiv \gamma_j-1}}^{\ell(\gamma)}\left(q^{k-j}q_1^{\overline{(\gamma_j-1)}/2}-q^{j-k}q_1^{-\overline{(\gamma_j-1)}/2}z\right)   \prod_{\substack{j=1\\i\equiv -\epsilon_j}}^{\ell(\gamma)} \left(q^{k-j}q_3^{-\overline{(-\epsilon_j)}/2}-q^{j-k}q_3^{\overline{(-\epsilon_j)}/2}z \right) \times \\ \prod_{\substack{j=1\\i\equiv \gamma_j}}^{\ell(\gamma)} \left(q^{k-j+1}q_1^{\overline{\gamma_j}/2}-q^{j-k-1}q_1^{-\overline{\gamma_j}/2}z\right)^{-1} \prod_{\substack{j=1\\i\equiv -\epsilon_j-1}}^{\ell(\gamma)} \left(q^{k-j+1}q_3^{-\overline{(-\epsilon_j-1)}/2}-q^{j-k-1}q_3^{\overline{(-\epsilon_j-1)}/2}z \right)^{-1}.
\end{align*}

Note that $f^{(k)}_{i,(\bs \gamma,\bs\epsilon)}(z)$ are degree zero rational functions, with inverse values at $z=0$ and $z=\infty$.  Moreover, all zeroes and poles non-trivially depend on $k$. Given that $(\bs \gamma,\bs\epsilon)$ is self-comparable, the existence of such functions is equivalent to the property of 
$(\bs \gamma,\bs\epsilon)$ being colorless.

Then, we have an analog of Lemma \ref{M^k lem}.
\begin{lem}\label{M^k_gamma lem}
Let $v\in \mathring{\mc M}^{(k)}_{(\bs \gamma,\bs\epsilon)}$. Then $E_i^{(k)}(z)v$,  $F_i^{(k)}(z)v$,  $(K_i^\pm)^{(k)}(z)v$, are well defined vectors in $\mc M^{(k)}_{(\bs \gamma,\bs\epsilon)}$. Moreover,
these operators satisfy the relations \eqref{relCK}-\eqref{Serre6} on $v$.
\end{lem}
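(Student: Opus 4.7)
The plan is to adapt the argument of Lemma \ref{M^k lem} to account for the fixed boundary tail $\ket{\bs\gamma,\bs\epsilon}_0^+$ that appears in the last three factors of every vector in $\mathring{\mc M}^{(k)}_{(\bs \gamma,\bs\epsilon)}(u)$. The ``free'' part (positions $1,\dots,k-3$) behaves exactly as in the vacuum MacMahon case, so for that part one can invoke directly the analysis from the proof of Lemma \ref{M^k lem}. The new ingredient is to verify that the renormalization factors $f^{(k)}_{i,(\bs\gamma,\bs\epsilon)}(u/z)$ and (for $i=0$) $f^{(k-\ell(\gamma))}(K,u/z)$ cancel precisely the poles coming from the boundary tail, and that no new zeros of matrix coefficients are introduced that would push the image out of $\mc M^{(k)}_{(\bs\gamma,\bs\epsilon)}(u)$.

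First I would read off the eigenvalues of $K_i^\pm(z)$ on $\ket{\bs\gamma,\bs\epsilon}_0^+$ from the explicit formulas of Section \ref{sec pure fock 1}: they are products of $\psi$-factors indexed by the boxes of $(\bs\gamma,\bs\epsilon)$ of colors $i$ and $i\pm 1$. Accumulating these contributions over the three fixed tail positions (shifted by $q_2^{-j+1}$ for $j=k-2,k-1,k$) produces exactly the zero/pole pattern encoded by $f^{(k)}_{i,(\bs\gamma,\bs\epsilon)}(u/z)$. The colorlessness condition $a_i=b_{i+1}$ is what guarantees that the four products in the definition of $f^{(k)}_{i,(\bs\gamma,\bs\epsilon)}$ have matching degrees, so the resulting rational function has degree zero with inverse values at $0$ and $\infty$, as is necessary for $(K_i^\pm)^{(k)}(z)$ to be a legitimate Cartan current. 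For $i=0$, the extra factor $f^{(k-\ell(\gamma))}(K,u/z)$ plays the role of the normalization $f^{(k)}(K,u/z)$ in Lemma \ref{M^k lem}, shifted by $\ell(\gamma)$ to reflect the effective height of the boundary.

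Next I would check pole cancellations between the free part and the boundary tail. Potential poles at $z=q_1^{-\bar j}q_2^{-\ell}u$ or $z=q_3^{\bar j}q_2^{-\ell}u$ arising from the interface position $k-3$ acting against the tail come in two types: those killed by the renormalization factor above, and those killed by the self-comparability of $(\bs\gamma,\bs\epsilon)$, which forces the admissibility of $(\bs\lambda^{(k-3)},\bs\mu^{(k-3)})\geq_0(\bs\gamma,\bs\epsilon)$ to produce a compensating zero of an adjacent $K^\pm$-eigenvalue. This step is entirely parallel to the pole-cancellation argument in Lemma \ref{U^k lem}, applied layer by layer. Once the cancellations are verified, confinement of the result to $\mc M^{(k)}_{(\bs\gamma,\bs\epsilon)}(u)$ follows from the fact that the same zero mechanism used in Lemma \ref{M^k lem} forbids transitions that would violate the relation $\geq_0$ between consecutive layers.

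The main obstacle, and the place requiring the most careful bookkeeping, is the interface between the free part and the boundary tail. Everywhere else the argument reduces to either the vacuum MacMahon case or the tensor-product-of-Fock-modules case already treated. The defining relations \eqref{relCK}--\eqref{Serre6} then hold on $v$ automatically: they hold in the untruncated tensor product of Fock modules, and the scalar factors $f^{(k)}_{i,(\bs\gamma,\bs\epsilon)}$ and $f^{(k-\ell(\gamma))}(K,u/z)$ commute through all brackets and delta-function supports without altering the relations, exactly as in the proof of Lemma \ref{pure thm 1 Macmahon}.
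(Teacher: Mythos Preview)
Your proposal is correct and follows the same approach as the paper, which simply states that the proof is similar to that of Lemma \ref{U^k lem}; your outline is a reasonable elaboration of what that similarity entails, correctly isolating the role of the colorless self-comparable boundary $(\bs\gamma,\bs\epsilon)$ and the corresponding renormalization factors $f^{(k)}_{i,(\bs\gamma,\bs\epsilon)}(u/z)$. The only cosmetic difference is that you route the argument through Lemma \ref{M^k lem} (the vacuum MacMahon case) rather than directly through Lemma \ref{U^k lem}, but since the former is itself proved by reduction to the latter, this amounts to the same thing.
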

\begin{proof}
The proof of Lemma \ref{M^k_gamma lem} is similar to proof of Lemma \ref{U^k lem}.
\end{proof}

We have an embedding 
$$\phi^{(k)}_{(\bs \gamma,\bs\epsilon)}:\ \mc M^{(k)}_{(\bs \gamma,\bs\epsilon)}(u)\hookrightarrow \mc M^{(k+1)}_{(\bs \gamma,\bs\epsilon)}(u), \qquad  \ket*{\vec{\bs \lambda},\vec{\bs \mu}}^{(k)} \mapsto  \ket*{\vec{\bs \lambda},\vec{\bs \mu}}^{(k)}\otimes \ket{(\bs \gamma,\bs\epsilon)}_0.$$

Let $\mc M^{(\bs \gamma,\bs\epsilon)}(u)$ be the inductive limit of $\mc M^{(k)}_{(\bs \gamma,\bs\epsilon)}(u)$. The basis of $\mc M^{(\bs \gamma,\bs\epsilon)}(u)$ is given by $\ket*{\vec{\bs \lambda},\vec{\bs \mu}}$ with only finitely many $(\bs\lambda^{(i)},\bs\mu^{(i)})$ different from $(\bs \gamma,\bs\epsilon)$, such that $(\bs\lambda^{(i)},\bs\mu^{(i)})\geq_0 (\bs\lambda^{(i+1)},\bs \mu^{(i+1)})$ for all $i\in \Z_{>0}$. 

\begin{thm}\label{gamma thm Macmahon}
The space $\mc M^{(\bs \gamma,\bs\epsilon)}(u)$ is an irreducible tame highest weight $\Es$-module with highest weight vector $\ket{\bs \gamma,\bs\epsilon}_0\otimes \ket{\bs \gamma,\bs\epsilon}_0\otimes \dots\ $. If $K$ is generic, it is irreducible and tame.
\end{thm}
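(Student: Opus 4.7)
The strategy is to mirror the proof of Theorem \ref{pure thm 1 Macmahon}, with the additional technical work coming from the colorless self-comparable boundary data $(\bs\gamma,\bs\epsilon)$. First, I would establish a stability lemma analogous to Lemma \ref{stable lem Macmahon}: namely, that for any $v\in\mathring{\mc M}^{(k)}_{(\bs\gamma,\bs\epsilon)}$, one has $\phi^{(k)}_{(\bs\gamma,\bs\epsilon)}E_i^{(k)}(z)v=E_i^{(k+1)}(z)\phi^{(k)}_{(\bs\gamma,\bs\epsilon)}v$, and similarly for $F_i^{(k)}(z)$ and $(K_i^\pm)^{(k)}(z)$. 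The key identity to check is that when one appends an extra factor $\ket{\bs\gamma,\bs\epsilon}_0$ (a tail on which $F_i(z)$ acts trivially and $E_i(z)$ as well, since $(\bs\gamma,\bs\epsilon)$ is the minimal partition in the relation $\geq_0$), the renormalization shift in $f^{(k)}_{i,(\bs\gamma,\bs\epsilon)}(u/z)$ between levels $k$ and $k+1$ precisely absorbs the $K^\pm_i(z)$-eigenvalue of that appended factor. This is where the colorless condition is essential: it guarantees the existence of $f^{(k)}_{i,(\bs\gamma,\bs\epsilon)}(z)$ having degree zero, so the correction factors exist and have the right asymptotics at $0$ and $\infty$.

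Once stability is shown, the inductive limit $\mc M^{(\bs\gamma,\bs\epsilon)}(u)$ carries a well-defined $\Es$-module structure by Lemma \ref{M^k_gamma lem}. The vector $\ket{\bs\gamma,\bs\epsilon}_0\otimes\ket{\bs\gamma,\bs\epsilon}_0\otimes\cdots$ is manifestly singular: any $E_i(z)$ acting at position $j$ would require $(\bs\lambda^{(j)},\bs\mu^{(j)})$ to admit a legal decrease, contradicted either by the $\geq_0$-comparability with the identical neighbor, or, at a level where a box could be removed from $(\bs\gamma,\bs\epsilon)$, killed by the zero of the coefficient $f^{(k)}_{i,(\bs\gamma,\bs\epsilon)}$ chosen so that removal from $(\bs\gamma,\bs\epsilon)$ is forbidden. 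One then computes its $K^\pm_i(z)$-weight by factorizing across positions: the eigenvalue collapses telescopically to a rational function, which must be computed to justify the highest weight claim.

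For tameness when $K$ is generic, the argument is the same as in the Fock case: each basis vector $\ket*{\vec{\bs\lambda},\vec{\bs\mu}}$ is a $K_i^\pm(z)$-eigenvector whose eigenvalue is a product of $\psi$-factors determined by the box positions of the associated plane $\s$-partition with boundary. Genericity of $K$ (i.e., $K\neq q_1^aq_2^b$) prevents spurious cancellations between contributions from different boxes and from the $K$-dependent renormalization, so the eigenvalues separate points and weight spaces are one-dimensional. I would defer the combinatorial bookkeeping to the colored-box picture developed in Section \ref{sec: pictures}, just as Theorems \ref{pure thm 1} and \ref{pure thm 1 Macmahon} do.

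Irreducibility then follows formally: any submodule, being a sub of a tame module, has a basis of $K^\pm_i(z)$-eigenvectors, hence of $\ket*{\vec{\bs\lambda},\vec{\bs\mu}}$'s; the explicit action of $E_i(z)$ and $F_i(z)$ given by delta functions times nonzero constants (nonvanishing precisely by our choice of the renormalizers $f^{(k)}_{i,(\bs\gamma,\bs\epsilon)}$ and the self-comparability of $(\bs\gamma,\bs\epsilon)$, and by genericity of $K$) shows connectivity: any basis vector can be moved by $E_i(z)$'s to the highest weight vector, and reached back from it by $F_i(z)$'s. The main obstacle, and the part I would be most careful with, is verifying that the renormalization factor $f^{(k)}_{i,(\bs\gamma,\bs\epsilon)}(u/z)$ produces exactly the right zeros and poles to cancel the potentially divergent contributions of the boundary layer $\ket{\bs\gamma,\bs\epsilon}_0$ under $\Delta^{(k)}$; this is a nontrivial but mechanical computation that hinges crucially on $(\bs\gamma,\bs\epsilon)$ being both colorless and $0$-self-comparable.
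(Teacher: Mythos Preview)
Your proposal is correct and follows essentially the same approach as the paper: the paper's proof is a one-line reference stating that the argument is similar to Theorem \ref{pure thm 1 Macmahon}, and you have spelled out precisely what that similarity entails (stability under increasing $k$, well-definedness via Lemma \ref{M^k_gamma lem}, tameness deferred to the pictorial analysis in Section \ref{sec: pictures}, and irreducibility from tameness plus connectivity of the matrix-coefficient graph). Your identification of the role of the colorless and self-comparable hypotheses in making the renormalizers $f^{(k)}_{i,(\bs\gamma,\bs\epsilon)}$ work is accurate and matches the paper's remarks preceding Lemma \ref{M^k_gamma lem}.
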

\begin{proof}
Theorem \ref{gamma thm Macmahon} is similar to Theorem \ref{pure thm 1 Macmahon}.
\end{proof}

We call $\mc M^{(\bs \gamma,\bs\epsilon)}(u)$ the MacMahon module with vertical boundary $(\bs \gamma,\bs\epsilon)$.

In a similar fashion, for every $i\in\hat I$, for any parity $s_i$, and any colorless $i$-self-comparable  pair of partitions $(\bs \gamma,\bs \epsilon)$, we construct an irreducible tame $\Es$-submodule of the following tensor product in direction $s_i$
$$
\mc F_{-s_i\Lambda_i}(u)\otimes \mc F_{-s_i\Lambda_i}(uq_2^{s_i})\otimes \mc F_{-s_i\Lambda_i}(uq_2^{2s_i})\otimes \cdots 
$$
and with the highest weight vector  $\ket{\bs \gamma,\bs\epsilon}_i\otimes \ket{\bs \gamma,\bs\epsilon}_i\otimes \cdots\,$ .
We denote this module $\mc M_i^{(\bs \gamma,\bs\epsilon)}(u)$ and call it the {\it MacMahon module with vertical boundary $(\bs \gamma,\bs\epsilon)$}. The basis of $\mc M_i^{(\bs \gamma,\bs\epsilon)}(u)$ is parameterized by the set
\begin{equation*}
\{  (\vec{\bs \lambda},\vec{\bs \mu}),{\text{where } } (\bs\lambda^{(j)},\bs\mu^{(j)})\geq_i (\bs\lambda^{(j+1)},\bs \mu^{(j+1)}) , \ j\in\Z_{>0} , \ {\text{and }} (\bs\lambda^{(j)},\bs\mu^{(j)})=(\bs \gamma,\bs\epsilon), {\text{for almost all } j} \}.
\end{equation*}

\subsection{MacMahon modules with general boundaries}
Let $s_0=-1$. 
Let $(\bs \gamma,\bs \epsilon)$ be a colorless $0$-self-comparable 
pair of partitions, with  $\ell(\bs \gamma)\geq \ell (\bs\epsilon)$. Let $\bs\alpha=\alpha_1\geq \alpha_2 \geq \dots \geq \alpha_k>0$ be a partition with $k$ parts.

The module $\mc M_{0}^{(\bs \gamma,\bs \epsilon)}(u)$ is constructed using tensor product in negative direction. We multiply it by extra tensor product of Fock modules:
\begin{align}\label{prod alpha}
\mc F_{\Lambda_0}(uq_2^{\alpha_1+k})\otimes\mc F_{\Lambda_0}(uq_2^{\alpha_2+k-1})\otimes \dots\otimes \mc F_{\Lambda_0}(uq_2^{\alpha_k+1})\otimes \mc M_0^{(\bs \gamma,\bs \epsilon)}(u).
\end{align}

It has a basis of the form
\begin{align}\label{vectors alpha}
\ket{\tilde{\bs\nu}^{(1)},\tilde{\bs\pi}^{(1)}}_0^+\otimes\ket{\tilde{\bs\nu}^{(2)},\tilde{\bs\pi}^{(2)}}_0^+\otimes \dots \otimes \ket{\tilde{\bs\nu}^{(k)},\tilde{\bs\pi}^{(k)}}_0^+\otimes\ket*{\vec{\bs\lambda},\vec{\bs\mu}}.
\end{align}

We define new (generalized) partitions $\bs\nu^{(i)},\bs \pi^{^{(i)}}$ as follows.
$$
\bs \nu^{(i)}_{j}=\begin{cases} \infty   & (j\leq \alpha_i);\\
\tilde{\bs \nu}^{(i)}_{j-\alpha_i} & (j>\alpha_i),
\end{cases} \qquad 
\bs \pi^{(i)}_{j}=\begin{cases} \infty   & (j\leq \alpha_i);\\
\tilde{\bs \pi}^{(i)}_{j-\alpha_i} & (j>\alpha_i).
\end{cases} 
$$
Consider the subspace $\mc M_0^{(\bs \gamma,\bs \epsilon),\bs\alpha}(u)$ of the tensor product \eqref{prod alpha}  spanned by vectors \eqref{vectors alpha} such that
$$
(\bs{\nu}^{(1)},\bs{\pi}^{(1)})\geq_0(\bs{\nu}^{(2)},\bs{\pi}^{(2)})\geq_0\dots \geq_0(\bs{\nu}^{(k)},\bs{\pi}^{(k)})\geq_0 (\bs\lambda^{(1)},\bs\mu^{(1)}).
$$
Here, by convention, $\infty$ is larger than any integer and also $\infty\geq\infty$ is allowed for any parity.

The coproduct \eqref{coproduct} does not define a $\Es$-module structure on the whole space \eqref{prod alpha}, however, it does on the subspace $\mc M_0^{(\bs \gamma,\bs \epsilon),\bs\alpha}(u)$. More precisely, we have the following theorem.
 
\begin{thm}\label{MacMahon with boundaries}
The subspace $\mc M_0^{(\bs \gamma,\bs \epsilon),\bs\alpha}(u)$ is an irreducible tame highest weight $\Es$-module with highest weight vector given by the product of empty partitions.
\end{thm}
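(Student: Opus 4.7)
The plan is to follow the same scheme used in Theorem \ref{pure thm 1 Macmahon} and Theorem \ref{gamma thm Macmahon}, treating the extra Fock factors on the left as a finite analog of the semi-infinite tensor product used to build $\mc M_0^{(\bs\gamma,\bs\epsilon)}(u)$. First I would verify that the coproduct \eqref{coproduct} gives a well-defined action of $\Es$ on $\mc M_0^{(\bs\gamma,\bs\epsilon),\bs\alpha}(u)$. As in Lemmas \ref{lem VV}, \ref{lem WW} and \ref{lem: tensor fock +}, the matrix coefficients of $E_i(z)$, $F_i(z)$, $K_i^\pm(z)$ are delta functions multiplied by eigenvalues of $K_i^\pm(z)$, and the only issue is whether these eigenvalues have poles at the support of the delta function. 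Between two consecutive Fock factors $\mc F_{\Lambda_0}(uq_2^{\alpha_i+k-i+1})$ and $\mc F_{\Lambda_0}(uq_2^{\alpha_{i+1}+k-i})$, the ordering $(\bs\nu^{(i)},\bs\pi^{(i)})\geq_0 (\bs\nu^{(i+1)},\bs\pi^{(i+1)})$ on the generalized partitions is precisely the condition forbidding the adjacent configurations that, by the analysis in Lemma \ref{lem: tensor fock +}, would produce an uncanceled pole. At the junction with $\mc M_0^{(\bs\gamma,\bs\epsilon)}(u)$, the condition $(\bs\nu^{(k)},\bs\pi^{(k)})\geq_0(\bs\lambda^{(1)},\bs\mu^{(1)})$ plays the same role. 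The introduction of the $\infty$ parts is harmless because the vector and covector representations are tame and the corresponding factors are frozen, so no operator can act on them to produce a pole or change the label.

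Next I would check that the subspace is preserved. This is a direct combinatorial statement: the non-trivial zeros of matrix coefficients in $V(u)\otimes V(v)$, $W(u)\otimes W(v)$, $V(u)\otimes W(v)$ and $W(u)\otimes V(v)$ (spelled out in Lemmas \ref{lem VV}--\ref{lem VW}) force that whenever $E_i(z)$ or $F_i(z)$ adds or removes a box to some $(\bs\nu^{(i)},\bs\pi^{(i)})$ or some $(\bs\lambda^{(j)},\bs\mu^{(j)})$, the chain of $\geq_0$ inequalities either remains intact or is violated only in a direction where the result is identically zero. This is the same check as in Lemma \ref{U^k lem}, Lemma \ref{M^k lem} and Lemma \ref{M^k_gamma lem}; the hypothesis that $(\bs\gamma,\bs\epsilon)$ is colorless and $0$-self-comparable ensures that the tail of the tensor product behaves consistently, exactly as in the construction of $\mc M_0^{(\bs\gamma,\bs\epsilon)}(u)$. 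Tameness then follows because each basis vector \eqref{vectors alpha} is a tensor product of $K_i^\pm(z)$-eigenvectors in tame modules with distinct evaluation shifts $uq_2^{\alpha_i+k-i+1}$; the combined poles and zeros of the eigenvalues record the positions of the boxes of every factor and therefore uniquely recover the label of the vector. Irreducibility follows from tameness exactly as in the proofs of Theorems \ref{pure thm 1} and \ref{pure thm 1 Macmahon}: any submodule is spanned by eigenvectors, and $E_i(z), F_i(z)$ connect any two basis vectors via a sequence of admissible box moves. The tensor product of empty partitions is a singular vector since no box can be removed from it within the subspace, identifying it as the highest weight vector.

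The main obstacle is the pole-cancellation analysis at the two non-standard junctions: between adjacent Fock factors with evaluation parameters shifted by $q_2^{\alpha_i-\alpha_{i+1}+1}$ rather than by $q_2$, and between $\mc F_{\Lambda_0}(uq_2^{\alpha_k+1})$ and the first copy of $\mc F_{\Lambda_0}(u)$ inside $\mc M_0^{(\bs\gamma,\bs\epsilon)}(u)$. In each case one has to enumerate which pairs of factors of the form $[\,\cdot\,]_i\otimes[\,\cdot\,]^j$ etc.\ can arise, and verify that the $\geq_0$ constraint, together with the frozen $\infty$-parts encoding $\bs\alpha$, eliminates precisely the configurations that would produce a pole. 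Once this bookkeeping is carried out in parallel with Lemma \ref{lem: tensor fock +}, everything else is a direct transcription of the earlier proofs in Sections \ref{Fock sec} and \ref{MacMahon sec}.
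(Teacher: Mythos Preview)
Your proposal is correct and follows essentially the same approach as the paper, which simply records that the theorem follows from Lemma \ref{lem: tensor fock +} together with the construction of the MacMahon module $\mc M_0^{(\bs\gamma,\bs\epsilon)}(u)$. You have unpacked in detail the pole-cancellation and subspace-preservation checks that the paper delegates to these earlier results; in particular, the ``non-standard junctions'' you flag as the main obstacle are exactly the $a\geq 1$ cases of Lemma \ref{lem: tensor fock +}, so no new analysis is needed there.
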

\begin{proof}
The theorem follows from Lemma \ref{lem: tensor fock +} and the construction of the MacMahon module $\mc M_0(u)$
\end{proof}

More generally, for any $i\in\hat I$, any $s_i$, any colorless $i$-self-comparable 
pair of partitions $(\bs \gamma,\bs \epsilon)$, with  $\ell(\bs \gamma)\geq \ell (\bs\epsilon)$, and a partition $\bs\alpha$ with $k$ parts,  we construct an irreducible tame $\Es$-module $\mc M_i^{(\bs \gamma,\bs \epsilon),\bs\alpha}(u)$. The basis of  $\mc M_i^{(\bs \gamma,\bs \epsilon),\bs\alpha}(u)$ is parameterized by an infinite sequence of pairs of partitions $(\bs \lambda^{(i)},\bs\mu^{(i)})$, $i\in\Z_{>0}$, such that $(\bs \lambda^{(i)},\bs\mu^{(i)})\geq_i(\bs \lambda^{(i+1)},\bs\mu^{(i+1)})$, such that $\lambda^{(i)}_j=\mu^{(i)}_j=\infty$ if and only if $i\leq k$, $j\leq \alpha_i$, and such that $(\bs \lambda^{(i)},\bs\mu^{(i)})=(\bs \gamma,\bs \epsilon)$ for all but finitely many $i$.

We call the module $\mc M_i^{(\bs \gamma,\bs \epsilon),\bs\alpha}(u)$ the {\it MacMahon module with vertical boundary $(\bs \gamma,\bs \epsilon)$ and horizontal boundary $\bs \alpha$}.

\section{Pictures}\label{sec: pictures}
\subsection{General conventions}
The study of highest weight $\Es$-modules is related to combinatorics of supersymmetric versions of partitions and plane partitions. We describe a convenient way to visualize the modules appearing in this paper. 

We picture basis vectors by a collection of colored boxes in 2 (tensor products of vector and covector representations, Fock modules) or 3 (tensor products of Fock modules, MacMahon modules) dimensions. The basic vectors are eigenvectors of the generators $K^{\pm}_i(z)$. The eigenvalues are rational functions and are computed from the picture as we describe below. Operators $F_i(z)$ act by adding boxes of color $i$, and operators $E_i(z)$ by removing boxes of color $i$. The matrix coefficients of $F_i(z)$ and $E_i(z)$ are delta functions multiplied by some constants. The support of the delta function is given by the $q$-content of the box which is being added or removed.

\medskip

We consider boxes with coordinates $(i,j)$ or $(i,j,k)$, where $i,j,k\in\Z$. In our pictures, the axes always look as given in Figure \ref{axes pic}. The box with coordinates $(0,0)$ or $(0,0,0)$ will have color shown in red.

\begin{figure}[H]
\begin{tikzpicture}
\tgyoung(0,0pt,<\textcolor{red}{0}>)
\draw[->] (13pt,13pt)--++(180:-26pt);
\draw[->] (0pt,0pt)--++(-90:26pt);
\draw[-] (39pt,3pt) node[scale=0.75]{$i$};
\draw[-] (-9pt,-25pt) node[scale=0.75]{$j$};

\draw[->] (120pt,-13pt)++(-20:13pt)--++(-20:26pt);
\draw[->] (120pt,-13pt)++(90:13pt)--++(90:26pt);
\draw[->] (120pt,-13pt)++(-140:13pt)--++(-140:26pt);
\draw[-] (120pt,0pt)--++(-20:13pt)--++(-140:13pt)--++(160:13pt)--++(40:13pt);
\draw[-] (120pt,0pt)++(-20:13pt)--++(-90:13pt)--++(-140:13pt)--++(90:13pt);
\draw[-] (120pt,0pt)++(-140:13pt)--++(-90:13pt)--++(-20:13pt);
\draw[-] (129pt,26pt) node[scale=0.75]{$k$};
\draw[-] (129pt,26pt)++(-20:30pt)++(-90:30pt) node[scale=0.75]{$i$};
\draw[-] (111pt,26pt)++(-140:30pt)++(-90:32pt) node[scale=0.75]{$j$};

\draw[-] (121pt,-6.5pt) node[red]{$0$};
\end{tikzpicture}
\caption{The labeling of axes.}\label{axes pic}
\end{figure}

Define the color of the box with coordinates $(i,j,k)$ to be $i-j+\ell$ modulo $m+n$, where $\ell$ is the color of the box with coordinates $(0,0,0)$. So the color increases in the $i$-th direction, decreases in the $j$-th direction, and is constant in the $k$-th direction.

Recall that our constructions of Fock and MacMahon modules depend on the direction $s$ of the tensor products. The Fock modules $\mc F_{\Lambda_i}$, for all $i\in \hat I$, the Fock modules $\mc F_{(r+1)\Lambda_{i-1}- r\Lambda_{i}}$, $\mc F_{(r+1)\Lambda_{i}- r\Lambda_{i-1}}$, $r>0$, $i\in \hat I$, such that $s_i=-1$, correspond to positive directions. For the MacMahon modules $\mc M_i$, $i\in\hat I$, such that $s_i=-1$, the direction is negative, however, each layer corresponds to positive direction. For all these cases, we say the directions is positive. In other cases we say it is negative.

For a module in positive direction, we set $s=1$. 
For a module in negative direction, we set $s=-1$.
Given a picture of a basis vector of a module,
define the $q$-content $c(i,j,k)$ of the box with coordinates $(i,j,k)$ by the rule
\begin{align}\label{q content}
c(i,j,k)=\begin{cases} q_1^{-\overline{(i-j+\ell)}}q_2^{s(j-k)} & (i\geq j); \\
q_3^{\overline{(i-j+\ell)}}q_2^{s(i-k)} & (i<j),
\end{cases}
\end{align}
where the bar is defined by \eqref{overline}, \eqref{periodic}, and $\ell$ is the color of the box with coordinates $(0,0,0)$.
If we are in two dimensions, then the formula for the $q$-content is the same with $k$ set to be $0$.
 
\medskip

We choose for our examples of pictures the following values: $m=3,n=2$ and $\s=(1,1,1,-1,-1)$. In particular, $s_0=s_5=-1$.

\subsection{Pictures of tensor products of  vector and covector representations}\label{sec vector pic}

Basic vectors in modules $V(u)$ and $W(u)$ are pictured as semi-infinite arrays of colored boxes, see Figure \ref{fig1}.

The vector $[u]_r\in V(u)$ is represented as a horizontal semi-infinite row of boxes with coordinates $(i,0)$, $i\leq r$, see Figure \ref{fig1}.

Note that here we chose the box with coordinates $(0,0)$ to have color $0$, in particular, the last box $(r,0)$ has color $r$.
The $i$ coordinates are written below the boxes. 

Similarly, the vector $[u]^{-r}\in W(u)$ is represented as a vertical semi-infinite column of boxes with coordinates $(0,j)$, $j\leq r$.
The $j$ coordinates are written to the right of the boxes. Note that the last box $(0,r)$ has color $-r$.

\begin{figure}[H]
\centering
\begin{tikzpicture}
\node at (0pt,7pt) {$[u]_4$:};
\tgyoung(24pt,3pt,:\hdots)
\tgyoung(40pt,0cm,4<\textcolor{red}{0}>1234)
\tgyoung(40pt,-14pt,:<^{-1}>:<^0>:<^1>:<^2>:<^3>:<^4>)

\node at (196pt,7pt) {$[u]^{-2}$:};
\tgyoung(220pt,26pt,:\vdots)
\tgyoung(220pt,13pt,1,<\textcolor{red}{0}>,4,3)
\tgyoung(235pt,13pt,:<_{-1}>,:<_0>,:<_1>,:<_2>)
\end{tikzpicture}
\caption{Semi-infinite arrays depicting the vectors $[u]_4$ and $[u]^{-2}$ with $N=5$. }
\label{fig1}
\end{figure}
The horizontal row ending at the box of color $i$ is odd if and only if $s_{i+1}=-1$, while the vertical column ending at the box of color $i$ is odd if and only is $s_{i}=-1$. In our example, $[u]_4$ is odd and $[u]^{-2}$ is even.

The vector $[u]_{(r_1,\dots,r_k)}=[u]_{r_1}\otimes[uq_2^{s}]_{r_2}\otimes\cdots \otimes[uq_2^{s(k-1)}]_{r_k}$ is then represented as $k$ semi-infinite rows stacked from top to bottom and shifted by one box to the right at each level. 
This is dictated by the definition of the $q$-content. For example, the $q$-content of the box corresponding to $[uq_2^{st}]_0$ must be $q_2^{st}$, so it is placed in the box with coordinates $(t,t)$.

\begin{figure}[H]
\centering
\begin{tikzpicture}
\node at (0pt,7pt) {$[u]_4$:};
\tgyoung(24pt,3pt,:\hdots)
\tgyoung(40pt,0cm,4<\textcolor{red}{0}>1234)
\tgyoung(40pt,-14pt,:<^{-1}>:<^0>:<^1>:<^2>:<^3>:<^4>)

\node at (0pt,-23pt) {$[uq_2]_4$:};
\tgyoung(24pt,-27pt,:\hdots)
\tgyoung(40pt,-30pt,401234)
\tgyoung(40pt,-44pt,:<^{-1}>:<^0>:<^1>:<^2>:<^3>:<^4>)

\node at (0pt,-53pt) {$[uq_2^{2}]_4$:};
\tgyoung(24pt,-57pt,:\hdots)
\tgyoung(40pt,-60pt,401234)
\tgyoung(40pt,-74pt,:<^{-1}>:<^0>:<^1>:<^2>:<^3>:<^4>)

\node at (0pt,-83pt) {$[uq_2^{3}]_3$:};
\tgyoung(24pt,-87pt,:\hdots)
\tgyoung(40pt,-90pt,40123)
\tgyoung(40pt,-104pt,:<^{-1}>:<^0>:<^1>:<^2>:<^3>)

\node at (0pt,-113pt) {$[uq_2^{4}]_2$:};
\tgyoung(24pt,-117pt,:\hdots)
\tgyoung(40pt,-120pt,4012)
\tgyoung(40pt,-134pt,:<^{-1}>:<^0>:<^1>:<^2>)

\node at (230pt,-53pt) {$[u]_{(4,4,4,3,2)}$:};
\tgyoung(270pt,-31pt,:\hdots)
\tgyoung(283pt,-44pt,:\hdots)
\tgyoung(296pt,-57pt,:\hdots)
\tgyoung(309pt,-70pt,:\hdots)
\tgyoung(322pt,-83pt,:\hdots)
\tgyoung(286pt,-34pt,4<\textcolor{red}{0}>1234:<\textcolor{green!50!black}{0}>,:;401234,::;40123<\textcolor{blue}{4}>,:::;40123:<\textcolor{green!50!black}{4}>,::::;401<\textcolor{blue}{2}>:<\textcolor{green!50!black}{3}>)
\end{tikzpicture}
\caption{Semi-infinite arrays depicting the vectors $[u]_4$, $[uq_2]_4$, $[uq_2^{2}]_4$, $[uq_2^{3}]_3$, $[uq_2^{4}]_2$, and $[u]_{(4,4,4,3,2)}$. }
\label{fig2}
\end{figure}
Now it is easy to describe the $\Es$-module inside $V(u)\otimes V(uq_2^s)\otimes\dots\otimes V(uq_2^{s(k-1)})$ from Lemma \ref{lem VV}.  

If $s=1$, it has a basis $[u]_{\bs \lambda}$ parameterized by $\s^-_+$-partitions $\bs \lambda$ with $k$ parts (which can be negative). In the pictures, each row should end no further than the row above, except that it can have one extra box of color $i$ if $s_{i+1}=-1$. In our example, the possible colors for the extra box are $3$ and $4$. We give an example in Figure \ref{fig2}.  Note the rows $1$, $2$ and $3$ all end at $4$ and rows $2$, $3$ have the extra box. 

If $s=-1$, it has a basis $[u]_{\bs \lambda}$ parameterized by $\s^+_+$-partitions $\bs \lambda$ with $k$ parts (which can be negative).
In the pictures, each row should end no further than the row above, except that it can have one extra box of color $i$ if $s_{i+1}=1$. In our example, the possible colors for the extra box would be $0$, $1$ and $2$. 

We call diagrams satisfying these rules {\it admissible}.

\medskip

We explain how to write the action of generators on such pictures. We start with $K_i^\pm(z)$. 

Given a diagram, a box is called {\it $i$-convex} if it has color $i$ and it can be removed (that is this box is present in the diagram and removing that box produces an admissible diagram).

We define the {\it order $d$} of an $i$-convex box as follows.

Suppose an $i$-convex box has coordinates $(\lambda_j-1+j,j)$, then we have two possibilities depending on the parity.
\begin{itemize}
    \item If $s_{i+1}=s$, then the order of the $i$-convex box is $d$ if $\la_j-1=\la_{j+d-1}\neq \la_{j+d}$. Note that,  if $s_i=s_{i+1}=s$, we must have $d=1$.
    \item If $s_{i+1}=-s$, then the order of the $i$-convex box is $d$ if $\la_j=\la_{j-d+1}\neq \la_{j-d}$ ($j-d$ is allowed to be zero).
\end{itemize}

In Figure \ref{fig2}, we have one $4$-convex box of order 3 in the third row and one $2$-convex box of order $1$ in the fifth row colored in blue.

\medskip

Similarly, a box is called {\it $i$-concave} if it has color $i$, and it can be added (that is this box is not present in the diagram and adding that box produces an admissible diagram). 

We also define the {\it order $d$} of an $i$-concave box as follows.

Suppose an $i$-concave box has coordinates $(\lambda_j+j,j)$, then we again have two possibilities depending on the parity.
\begin{itemize}
    \item If $s_i=-s$, then the order of the $i$-concave box is $d$ if $\la_j=\la_{j+d-1}\neq \la_{j+d}$.
    \item If $s_{i}=s$, then the order of the $i$-concave box is $d$ if $\la_j+1=\la_{j-d+1}\neq \la_{j-d}$ ($j-d$ is allowed to be zero). Note that if $s_{i+1}=s_i=s$, we must have $d=1$.
\end{itemize}

In Figure \ref{fig2}  we have one $0$-concave box of order 3 in the first row, one $4$-concave box of order $1$ in the fourth row, and one $3$-concave box of order 2 in the fifth row colored in green.

\medskip

Similar to the even case \cite[Lemma 3.4]{FJMM}, the action of $K_i^\pm(z)$ on a diagram can be described in terms of its convex and concave boxes which are now supplemented by their orders.

The eigenvalue of $K_i^\pm(z)$ on a diagram is a product of $\psi_d(cu/z)$ over $i$-concave and $i$-convex boxes, where $d$ is determined by the degree and $c$ by the $q$-content:
\begin{align}\label{V eigen}
    \prod_{i-{\text{convex}\ \Box}}\psi_{-s_{i+1} d(\Box)}(c(\Box)u/z)\,\prod_{i-{\text{concave}\ \Box}}\psi_{s_{i} d(\Box)}(c(\Box)u/z).
\end{align}

\medskip

In our example, $K_0^\pm(z)$ has eigenvalue $\psi_{-3}(q_1^{-1}u/z)$, $K_2^\pm(z)$ has eigenvalue $\psi_{-1}(q_1^{-2}q_2^4u/z)$, $K_3^\pm(z)$ has eigenvalue $\psi_{2}(q_1^{-3}q_2^4u/z)$, $K_4^\pm(z)$ has eigenvalue $\psi_{3}(q_1^{-2}q_2^2u/z)\psi_{-1}(q_1^{-2}q_2^3u/z)$, and  $K_1^{\pm}(z)$ has eigenvalue 1.

The operators $F_i(z)$ add $i$-concave boxes. Then, for a chosen $i$-concave box $\Box$ in the $j$-th row, the corresponding matrix coefficient is given by $s_i\delta(c(\Box)u/z)$ multiplied by the value of $K_i^+(z)$ acting on the diagram obtained by removing the row which contains the box and all rows below it, and by an extra sign given by $(-1)^{|i|\sum_{k=1}^{j-1}(1-s_{\lambda_k})/2}$. This sign is non-trivial  if and only if $F_i(z)$ is odd, and the total parity of the rows preceding row $j$ is also odd, see \eqref{sign tensor}.

The operators $E_i(z)$ remove $i$-convex boxes. Then, for a chosen $i$-convex box $\Box$ in the $j$-th row, the corresponding matrix coefficient is given by $\delta(c(\Box)u/z)$ multiplied by the value of $K_i^-(z)$ acting on the diagram obtained by removing the row which contains the box and all rows above it, and by the same extra sign $(-1)^{|i|\sum_{k=1}^{j-1}(1-s_{\lambda_k})/2}$ as in $F_i(z)$ case.

\medskip

Similarly, the vector $[u]^{(r_1,\dots,r_k)}=[u]^{-r_1}\otimes[uq_2^s]^{-r_2}\otimes\cdots \otimes [uq_2^{s(k-1)}]^{-r_{k}}$ is pictured as $k$ semi-infinite columns stacked from left to right and shifted down by one box at each level. 

\begin{figure}[H]
\centering
\begin{tikzpicture}
\node at (0pt,7pt) {$[u]^{-3}$:};
\tgyoung(24pt,26pt,:\vdots)
\tgyoung(24pt,13pt,1,<\textcolor{red}{0}>,4,3,2)
\tgyoung(38pt,13pt,:<_{-1}>,:<_0>,:<_1>,:<_2>,:<_3>)

\node at (90pt,7pt) {$[uq_2]^{-1}$:};
\tgyoung(119pt,26pt,:\vdots)
\tgyoung(119pt,13pt,1,0,4)
\tgyoung(133pt,13pt,:<_{-1}>,:<_0>,:<_1>)

\node at (180pt,7pt) {$[uq_2^2]^{-1}$:};
\tgyoung(209pt,26pt,:\vdots)
\tgyoung(209pt,13pt,1,0,4)
\tgyoung(223pt,13pt,:<_{-1}>,:<_0>,:<_1>)

\node at (270pt,7pt) {$[uq_2^3]^{0}$:};
\tgyoung(294pt,26pt,:\vdots)
\tgyoung(294pt,13pt,1,0)
\tgyoung(308pt,13pt,:<_{-1}>,:<_0>)

\node at (385pt,7pt) {$[u]^{(3,1,1,0)}$:};
\tgyoung(424pt,26pt,:\vdots)
\tgyoung(437pt,13pt,:\vdots)
\tgyoung(450pt,0pt,:\vdots)
\tgyoung(463pt,-13pt,:\vdots)
\tgyoung(424pt,13pt,1,<\textcolor{red}{0}>1,401,3401,<\textcolor{blue}{2}>:<\textcolor{green!50!black}{3}><\textcolor{blue}{4}><\textcolor{blue}{0}>,:<\textcolor{green!50!black}{1}>:::<\textcolor{green!50!black}{4}>)
\end{tikzpicture}
\caption{Semi-infinite arrays depicting the vectors $[u]^{-3}$, $[uq_2]^{-1}$, $[uq_2^2]^{-1}$, $[uq_2^3]^{0}$ and $[u]^{(3,1,1,0)}$. }
\label{fig3}
\end{figure}

It is again easy to describe the $\Es$-module inside $W(u)\otimes W(uq_2^s)\otimes\dots\otimes W(uq_2^{s(k-1)})$ from Lemma \ref{lem WW}. 

If $s=1$, it has a basis $[u]^{\bs \mu}$ parameterized by $\s^-_-$-partitions $\bs \mu$ with $k$ parts (which can be negative). In the pictures, each column should end no further than the column to the right, except that it can one extra box of color $i$ if $s_{i}=-1$. In our example, the possible colors for the extra box are $0$ and $4$.  We show an example in Figure \ref{fig3}. Note that the columns $2$ and $3$ end at $4$ and column $3$ has the extra box.

If $s=-1$, it has a basis $[u]^{\bs \mu}$ parameterized by $\s^+_-$-partitions $\bs \mu$ with $k$ parts (which can be negative).
In the pictures, each column should end no further than the column to the right, except that it can one extra box of color $i$ if $s_{i}=1$. In our example, the possible colors for the extra box are $1$, $2$, and $3$. 

We call diagrams satisfying the above rules admissible.

We again explain how to write the action of generators on such a picture. The procedure is similar to the previous case but with a few differences.

Given a diagram, a box is called {\it $i$-convex} if it has color $i$ and it can be removed (that is, removing that box produces an admissible diagram).

We define the {\it order $d$} of an $i$-convex box as follows.

Suppose an $i$-convex box has coordinates $(j,j+\mu_j)$, then we have two possibilities depending on the parity.
\begin{itemize}
    \item If $s_{i}=s$, then the order of the $i$-convex box is $d$ if $\mu_j-1=\mu_{j+d-1}\neq \mu_{j+d}$. Note that if $s_{i+1}=s_i=s$, we must have $d=1$.
    \item If $s_{i}=-s$, then the order of the $i$-convex box is $d$ if $\mu_j=\mu_{j-d+1}\neq \mu_{j-d}$ ($j-d$ is allowed to be zero).
\end{itemize}

In Figure \ref{fig3}, we have one $0$-convex box of order $1$ in the fourth column, one $2$-convex box of order $1$ in the first column, and one $4$-convex box of order $2$ in the third column colored in blue. 

Similarly, a box is called {\it $i$-concave} if it has color $i$, and it can be added (that is, adding that box produces an admissible diagram).

We also define the {\it order $d$} of an $i$-concave box as follows.

Suppose an $i$-concave box has coordinates $(j,j+\mu_j+1)$, then we again have two possibilities depending on the parity.
\begin{itemize}
    \item If $s_{i+1}=-s$, then the order of the $i$-concave box is $d$ if $\mu_j=\mu_{j+d-1}\neq \mu_{j+d}$.
    \item If $s_{i+1}=s$, then the order of the $i$-convex box is $d$ if $\mu_j+1=\mu_{j-d+1}\neq \mu_{j-d}$ ($j-d$ is allowed to be zero). Note that if $s_{i}=s_{i+1}=s$, we must have $d=1$.
\end{itemize}

 In Figure \ref{fig3}, we have one $1$-concave box of order $1$ in the first column, one $3$-concave box of order $2$ in the second column, and one $4$-concave box of order $1$ in the fourth column colored in green. 

\medskip

The eigenvalue of $K_i^\pm(z)$ on a diagram is a product of $\psi_d(cu/z)$ over $i$-concave and $i$-convex boxes, where $d$ is determined by the degree and $c$ by the $q$-content:
\begin{align}\label{W eigen}
    \prod_{i-{\text{convex}\ \Box}}\psi_{-s_{i} d(\Box)}(c(\Box)u/z)\,\prod_{i-{\text{concave}\ \Box}}\psi_{s_{i+1} d(\Box)}(c(\Box)u/z).
\end{align}

In our example, $K_1^{\pm}(z)$ has eigenvalue $\psi_{1}(u/z)$, $K_2^\pm(z)$ has eigenvalue $\psi_{-1}(q_3^{-1}u/z)$, 
$K_3^\pm(z)$ has eigenvalue $\psi_{-2}(q_3^{-2}q_2u/z)$,
$K_4^\pm(z)$ has eigenvalue $\psi_{2}(q_3^{-1}q_2^2u/z)\psi_{-1}(q_3^{-1}q_2^3u/z)$, and $K_0^\pm(z)$ has eigenvalue $\psi_{1}(q_2^3u/z)$.

The operators $F_i(z)$ add $i$-concave boxes. Then, for a chosen $i$-concave box $\Box$ in the $j$-th column, the corresponding matrix coefficient is given by $s_{i+1}\delta(c(\Box)u/z)$ multiplied by the value of $K_i^+(z)$ acting on the diagram obtained by removing the column which contains the box and all columns to the right of it, and an extra sign given by $(-1)^{|i|\sum_{k=1}^{j-1}(1-s_{-\mu_k})/2}$. This sign is nontrivial if and only if $F_i(z)$ is odd, and the total parity of the columns preceding column $j$ is also odd, see \eqref{sign tensor}.

The operators $E_i(z)$ remove $i$-convex boxes. Then, for a chosen $i$-convex box $\Box$ in the $j$-th column, the corresponding matrix coefficient is given by $\delta(c(\Box)u/z)$ multiplied by the value of $K_i^-(z)$ acting on the diagram obtained by removing the column which contains the box and all columns to the left of it, and by the same extra sign $(-1)^{|i|\sum_{k=1}^{j-1}(1-s_{-\mu_k})/2}$ as in $F_i(z)$ case.

\subsection{Pictures of Fock modules}
We start with the case of $\mc F_{\Lambda_0}(u)$ constructed in Theorem \ref{pure thm 1}. Then, a basis vector $\ket{\bs\lambda,\bs\mu}_0^+$ is pictured by combining the horizontal picture of $[u]_{\bs\lambda}$ and vertical picture of $[u]^{\bs \mu}$ aligned along the blue broken line.  We call such pictures $\s$-partitions. An example is given in Figure \ref{s-tab}.

\begin{figure}[H]
\centering
\begin{tikzpicture}
\node at (0pt,7pt) {$\ket{\bs\lambda,\bs\mu}_0^+$:};
\tgyoung(40pt,0cm,<\textcolor{red}{0}>123401<\textcolor{blue}{2}>:<\textcolor{green!50!black}{3}>,401234:<\textcolor{green!50!black}{0}>,340123<\textcolor{blue}{4}>,2340123:<\textcolor{green!50!black}{4}>,123<\textcolor{blue}{4}>012<\textcolor{blue}{3}>,012:<\textcolor{green!50!black}{3}>:<\textcolor{green!50!black}{4}>;<\textcolor{blue}{0}>:<\textcolor{green!50!black}{1}>,401::::<\textcolor{violet}{0}>,<\textcolor{blue}{3}>:<\textcolor{green!50!black}{4}>;<\textcolor{blue}{0}>,:<\textcolor{green!50!black}{2}>)
\draw[-,line width=0.5mm, blue] (40pt,13pt)-- ++(0pt,-13pt)-- ++(13pt,0)-- ++(0pt,-13pt)-- ++(13pt,0)-- ++(0pt,-13pt)-- ++(13pt,0)-- ++(0pt,-13pt)-- ++(13pt,0)-- ++(0pt,-13pt)-- ++(13pt,0)-- ++(0pt,-13pt)-- ++(13pt,0)-- ++(0pt,-13pt)-- ++(13pt,0);
\end{tikzpicture}
\caption{Young diagram depicting  $\ket{\bs\lambda,\bs\mu}_0^+\in \mathcal{F}_{\Lambda_0}(u)$ with $\bs\lambda=(8,5,5,4,4,1)$ and $\bs\mu=(7,5,5,1)$. }
\label{s-tab}
\end{figure}

The $i$-convex and $i$-concave boxes (pictured in blue and green, respectively) are the boxes that can be removed and added, respectively.
Note that now our vectors are infinite tensor products, so one is allowed to add boxes in the first empty row or column (if it does not break the repetition rule). The orders of the convex and concave boxes are computed as in Section \ref{sec vector pic}, with the convention that $\bs\mu$ is appended with zero parts so that  $\ell(\bs\mu)=\ell(\bs\la)$.

\medskip

Then, the eigenvalue of $K_i^\pm(z)$ on $\ket{\bs\la, \bs\mu}_0^+$ is the product over all $i$-concave and $i$-convex boxes of the form $\eqref{V eigen}$ if the box is in the $\bs \la$-diagram (above the broken blue line), and of the form $\eqref{W eigen}$ if the box is in the $\bs \mu$-diagram (below the broken blue line) with the following modification when $i=0$.

We color the $0$ box with coordinates $(\ell(\bs\la),\ell(\bs\la))$ in violet color. 
We include this box in the product as $0$-convex on the $\bs \mu$-side. In addition, if it is $0$-concave on the $\bs\la$-side, then this is not counted as $0$-concave for the eigenvalue of $K_0^\pm(z)$ (but instead it is counted as $0$-convex on the $\bs \mu$-side).

In general, the violet box in $\mc F_{\Lambda_i}$ is counted as $i$-convex on the $\bs\mu$-side  if $s_i=-1$ and as $i$-concave on the $\bs\lambda$-side if $s_i=1$. It accounts for the contribution of the tail to the eigenvalue of $K_i(z)$.

\medskip

Again, the operators $F_i(z)$ add $i$-concave boxes. Then, if the added box is in the $\lambda_j$ part, the corresponding matrix coefficient is given by $s_{i}\delta(c(\Box)u/z)$ multiplied by the value of $K_i^+(z)$ acting on the diagram obtained by removing all boxes except the first $j-1$ rows and the first $j-1$ columns, and the extra sign $(-1)^{|i|\sum_{k=1}^{j-1} (2-s_{\lambda_k}-s_{-\mu_k})/2}$.

If the added box is in the $\mu_j$ part, the corresponding matrix coefficient is given by $s_{i+1}\delta(c(\Box)u/z)$ multiplied by the value of $K_i^+(z)$ acting on the diagram obtained by removing all boxes except  the first $j$ rows and the first $j-1$ columns, and by the
extra sign 
$(-1)^{|i|(1-s_{\lambda_j}+\sum_{k=1}^{j-1}(2-s_{\lambda_k}-s_{-\mu_k}))/2}$.

The operators $E_i(z)$ remove $i$-convex boxes. Then, for a chosen box $\Box$, the corresponding matrix coefficient is given by $\delta(c(\Box)u/z)$ multiplied by the value of $K_i^-(z)$ acting on the diagram obtained by removing all boxes in the first $j$ rows and the first $j-1$ columns if the removed box is in the $\lambda_j$ part, and all boxes in the first $j$ rows and the first $j$ columns if the removed box is in the $\mu_j$ part with the extra sign as in the $F_i(z)$ case.

\medskip

Next, we look at the non-pure cases.  

We  consider the case $\mc F_{3\Lambda_3-4\Lambda_{2}}(uq_1^{-2})$.  The module $\mc F_{3\Lambda_3-4\Lambda_{2}}(uq_1^{-2})$ is constructed in Theorem \ref{non pure 1 0,r} using the inclusion 
$$\mc F_{3\Lambda_3-4\Lambda_{2}}(uq_1^{-2})\subset V(u)\otimes V(uq_2^{-1})\otimes V(uq_2^{-2})\otimes \mc F_{-\Lambda_2}(uq_1^{-2}q_2^{-3}).$$
Note that this module is in the negative direction.

Thus, the pictures are the same as for $\mc F_{\Lambda_2}(u)$ except that we have three extra rows, corresponding to the first three factors. An example is given in Figure \ref{nu mu la V}.  
The extra rows are on top of the green line. Note the alignment of these rows. We again use the color blue for the numbers in convex boxes, green for the numbers in concave boxes, and violet for the extra zero, which is counted as convex in the vertical direction.
The rules for the action of generators of $\Es$ are also the same.
 Note that the box with violet $2$ is counted as $2$-concave for the $\bs\lambda$-part.

\begin{figure}[H]
\centering
\begin{tikzpicture}
\node at (-5pt,7pt) {$\ket{(\bs\nu,\bs\lambda),\bs\mu}_{2,3}^{-}$:};
\tgyoung(40pt,0cm,:;<\textcolor{red}{3}>401:<\textcolor{green!50!black}{2}>,::;340<\textcolor{blue}{1}>,:::;340:<\textcolor{green!50!black}{1}>,:::;2340,:::;123<\textcolor{blue}{4}>:<\textcolor{green!50!black}{0}>,:::;<\textcolor{blue}{0}>:<\textcolor{green!50!black}{1}><\textcolor{blue}{2}>:<\textcolor{green!50!black}{3}>,::::<\textcolor{green!50!black}{4}>:::<\textcolor{violet}{2}>)
\draw[-,line width=0.5mm,blue] (40pt,13pt)-- ++(0pt,-13pt)-- ++(13pt,0pt)-- ++(0pt,-13pt)-- ++(13pt,0pt)-- ++(0pt,-13pt)-- ++(13pt,0pt)-- ++(0pt,-13pt)-- ++(13pt,0pt)-- ++(0pt,-13pt)-- ++(13pt,0pt)-- ++(0pt,-13pt)-- ++(13pt,0pt)-- ++(0pt,-13pt)-- ++(13pt,0pt);
\draw[-,line width=0.5mm,green!50!black] (79pt,-26pt)-- ++(52pt,0pt);
{\Yfillcolour{gray}
\Yfillopacity{.5}
\tgyoung(40pt,0pt,|7:,:|6:,::|5)}
\end{tikzpicture}
\caption{Young diagram depicting the vector $\ket{(\bs\nu,\bs\lambda),\bs\mu}_{2,3}^{-}\in \mathcal{F}_{3\Lambda_{3}-4\Lambda_{2}}(uq_1^{-2})$ for $\bs\nu=(4,4,3)$, $\bs\lambda=(4,3,1)$, and $\bs\mu=(2)$. }
\label{nu mu la V}
\end{figure}

Finally, one more type of pictures appears if we  consider the case $\mc F_{4\Lambda_0-3\Lambda_{4}}(uq_3)$.  The module $\mc F_{4\Lambda_0-3\Lambda_{4}}(uq_3)$ is constructed in Theorem \ref{non pure 2 0,r} using the inclusion 
$$\mc F_{4\Lambda_0-3\Lambda_{4}}(uq_3)\subset W(uq_3)\otimes W(uq_2q_3)\otimes W(uq_2^{2}q_3)\otimes \mc F_{\Lambda_0}(uq_2^3q_3).$$

Thus, the pictures are the same as for $\mc F_{\Lambda_0}(u)$ except that we have three extra columns, corresponding to the first three factors. The example is given in Figure \ref{nu mu la +}.  
The extra columns are to the left of the green line. Note the alignment of these columns.

\begin{figure}[H]
\centering
\begin{tikzpicture}
\node at (0pt,7pt) {$\ket{\bs\lambda,(\bs\nu,\bs\mu)}_{0,3}^{+}$:};
\tgyoung(40pt,0cm,<\textcolor{red}{4}>,34,2340123:<\textcolor{green!50!black}{4}>,12:<\textcolor{green!50!black}{3}><\textcolor{blue}{4}>012<\textcolor{blue}{3}>,01:::<\textcolor{green!50!black}{4}><\textcolor{blue}{0}>:<\textcolor{green!50!black}{1}>,:<\textcolor{green!50!black}{4}><\textcolor{blue}{0}>:::::<\textcolor{violet}{0}>)
\draw[-,line width=0.5mm,blue] (40pt,26pt)-- ++(0pt,-13pt)-- ++(13pt,0pt)-- ++(0pt,-13pt)-- ++(13pt,0pt)-- ++(0pt,-13pt)-- ++(13pt,0pt)-- ++(0pt,-13pt)-- ++(13pt,0pt)-- ++(0pt,-13pt)-- ++(13pt,0pt)-- ++(0pt,-13pt)-- ++(13pt,0pt)-- ++(0pt,-13pt)-- ++(13pt,0pt);
\draw[-,line width=0.5mm,green!50!black] (79pt,-26pt)-- ++(0pt,-39pt);
{\Yfillcolour{gray}
\Yfillopacity{.5}
\tgyoung(40pt,13pt,_8,:_7,::_6 )}
\end{tikzpicture}
\caption{Young diagram depicting the vector $\ket{\bs\lambda,(\bs\nu,\bs\mu)}_{0,3}^{+}\in \mathcal{F}_{4\Lambda_0-3\Lambda_{4}}(uq_3^{-1})$ for $\bs\nu=(5,5,1)$, $\bs\lambda=(4,4,1)$, and $\bs\mu=(1)$. }
\label{nu mu la +}
\end{figure}
As always, we use the color blue for the numbers in convex boxes, green for the numbers in concave boxes, and violet for the extra zero, which is counted as convex in the vertical direction.
The rules for the action of generators of $\Es$ are also the same.

\subsection{Pictures of MacMahon modules}
We discuss the pictures of basic vectors in the MacMahon modules. Such pictures, which we call plane $\s$-partitions, are naturally shown in three dimensions.

We start with the pure case $\mc M_0(u)$ constructed in Theorem \ref{pure thm 1 Macmahon}. An example of a basis vector is shown in Figure \ref{3d}.
\begin{figure}[H]
\centering
\begin{tikzpicture}
\draw[-] (90pt,6.5pt) node[scale=0.75]{$0$}++(-20:13pt) node[scale=0.75]{$1$}++(-20:13pt) node[scale=0.75]{$2$}++(-90:26pt)++(-20:13pt) node[scale=0.75]{$3$}++(-90:13pt)++(-20:13pt)++(4.75pt,0.75pt) node[scale=0.75]{$4$}
++(-20:13pt)++(-7.5pt,-1pt) node[scale=0.75]{$\textcolor{green!50!black}{0}$};
\draw[-] (90pt,6.5pt)++(-140:13pt)++(-20:13pt) node[scale=0.75]{$0$}++(-20:13pt) node[scale=0.75]{$1$}++(-90:26pt)++(-20:13pt) node[scale=0.75]{$2$} ++(-20:13pt) node[scale=0.75]{$3$}++(-90:13pt)++(-20:13pt) node[scale=0.75]{$\textcolor{blue}{4}$};
\draw[-] ((90pt,6.5pt)++(-140:26pt)++(-20:26pt) node[scale=0.75]{$\textcolor{blue}{0}$}++(-20:13pt)++(-90:26pt) node[scale=0.75]{$1$}++(-20:13pt) node[scale=0.75]{$2$} ++(-20:13pt)++(-90:13pt) node[scale=0.75]{$3$}++(-20:13pt)++(-3pt,-.5pt) node[scale=0.75]{$\textcolor{green!50!black}{4}$};
\draw[-] (90pt,6.5pt)++(-140:65pt)++(-20:45.5pt)++(-90:39pt) node[scale=0.75]{$\textcolor{green!50!black}{3}$}++(40:14pt)++(-90:7pt) node[scale=0.75]{$\textcolor{green!50!black}{0}$};
\draw[-] (90pt,6.5pt)++(-140:39pt)++(-20:39pt)++(-90:26pt) node[scale=0.75]{$\textcolor{violet}{0}$}++(-20:13pt) node[scale=0.75]{$\textcolor{blue}{1}$}++(-90:13pt)++(-20:13pt) node[scale=0.75]{$2$} ++(-20:13pt) node[scale=0.75]{$\textcolor{blue}{3}$};
\draw[-] (90pt,6.5pt)++(-140:13pt) node[scale=0.75]{$4$}++(-140:13pt) node[scale=0.75]{$3$}++(-140:13pt) node[scale=0.75]{$2$} ++(-140:13pt) node[scale=0.75]{$\textcolor{blue}{1}$}++(-140:13pt)++(-90:39pt)
node[scale=0.75]{$0$}++(-140:13pt)++(3pt,0) node[scale=0.75]{$\textcolor{green!50!black}{4}$};
\draw[-] (90pt,6.5pt)++(-140:26pt)++(-90:21pt)++(-20:13pt)++(180:6.5pt) node[scale=0.75]{$\textcolor{green!50!black}{3}$};
\draw[-] (90pt,6.5pt)++(-140:26pt)++(-90:13pt)++(-20:13pt)++(180:2pt) node[scale=0.75]{$4$}++(-140:13pt)++(-90:13pt) node[scale=0.75]{$3$}++(0:3.25pt)++(-140:13pt) node[scale=0.75]{$\textcolor{blue}{2}$} ++(-140:13pt)++(-90:13pt) node[scale=0.75]{$1$}++(-140:13pt)
node[scale=0.75]{$0$};
\draw[-] (90pt,6.5pt)++(-140:39pt)++(-20:26pt)++(-90:13pt) node[scale=0.75]{$4$}++(-140:13pt)++(-90:13pt) node[scale=0.75]{$3$}++(-140:13pt)++(-90:13pt) node[scale=0.75]{$2$} ++(-140:13pt) node[scale=0.75]{$1$}++(-140:13pt)
node[scale=0.75]{$\textcolor{blue}{0}$};
\draw[-] (90pt,6.5pt)++(-140:52pt)++(-20:39pt)++(-90:39pt) node[scale=0.75]{$4$};
\draw[-] (90pt,13pt)--++(-20:39pt)--++(-90:26pt)--++(-20:13pt)--++(-90:12pt)++(-90:1pt)++(-20:2.3pt)--++(-20:10.7pt)--++(-90:12pt);
\draw[-] (90pt,13pt)--++(-140:13pt)--++(-20:39pt)--++(-90:26pt)--++(-20:26pt)--++(-90:13pt)--++(-20:13pt)--++(-90:13pt);
\draw[-] (90pt,13pt)++(-20:13pt)--+(-140:13pt)++(-20:13pt)--+(-140:13pt)++(-20:13pt)--+(-140:13pt)++(-90:13pt)--+(-140:13pt)++(-90:13pt)--+(-140:13pt)++(-20:13pt)--+(-140:13pt)++(-90:13pt)++(-20:13pt)--+(-140:13pt);
\draw[-] (90pt,13pt)++(-20:13pt)++(-140:26pt)--++(-20:26pt)--++(-90:26pt)--++(-20:26pt)--++(-90:13pt)--++(-20:13pt)--++(-90:12pt);
\draw[-] (90pt,13pt)++(-140:13pt)++(-20:13pt)--+(-140:13pt)++(-20:13pt)--+(-140:13pt)++(-20:13pt)--+(-140:13pt)++(-90:13pt)--+(-140:13pt)++(-90:13pt)--+(-140:13pt)++(-20:13pt)--+(-140:13pt)++(-20:13pt)--+(-140:13pt)++(-90:13pt)--+(-140:13pt)++(-20:13pt)--+(-140:13pt)++(-90:13pt)--+(-140:12pt);
\draw[-] (90pt,13pt)++(-140:26pt)++(-20:26pt)--++(-140:13pt)--++(-20:13pt)--++(-90:26pt)--++(-20:26pt)--++(-90:13pt)--++(-20:13pt)--++(-20:13pt)--++(-90:13pt);
\draw[-] (90pt,13pt)++(-140:26pt)++(-20:39pt)--+(-140:13pt)++(-90:13pt)--+(-140:13pt)++(-90:13pt)--+(-140:13pt)++(-20:13pt)--+(-140:13pt)++(-20:13pt)--+(-140:13pt)++(-90:13pt)--+(-140:13pt)++(-20:13pt)--+(-140:13pt);
\draw[-] (90pt,13pt)++(-140:39pt)++(-20:39pt)++(-90:26pt)--++(-140:13pt)--++(-20:26pt)--++(-90:13pt)--++(-20:26pt)--++(-90:13pt);
\draw[-] (90pt,13pt)++(-140:39pt)++(-20:52pt)++(-90:26pt)--+(-140:13pt)++(-20:13pt)--+(-140:13pt)++(-90:13pt)--+(-140:13pt)++(-20:13pt)--+(-140:13pt)++(-20:13pt)--+(-140:13pt)++(-90:13pt)--+(-140:13pt);
\draw[-] (90pt,13pt)++(-140:13pt)--++(-140:52pt)--++(-90:39pt)--++(-140:13pt)--++(-90:13pt);
\draw[-] (90pt,13pt)++(-140:26pt)++(-20:13pt)--++(-140:39pt)--++(-90:26pt);
\draw[-] (90pt,13pt)++(-140:26pt)--+(-20:13pt)++(-140:13pt)--+(-20:13pt)++(-140:13pt)--+(-20:13pt)++(-140:13pt)--+(-20:13pt)++(-90:13pt)--+(-20:13pt)++(-90:13pt)--+(-20:13pt)++(-90:13pt)--+(-20:13pt)++(-140:13pt)--+(-20:13pt)++(-90:13pt)--+(-20:2.3pt);
\draw[-] (90pt,13pt)++(-20:13pt)++(-140:39pt)--++(-90:26pt)--++(-140:26pt)--++(-90:13pt)--++(-140:26pt)--++(-90:13pt);
\draw[-] (90pt,13pt)++(-20:13pt)++(-140:26pt)--++(-90:13pt)--++(-20:2.2pt)++(160:2pt)--++(-140:39pt);
\draw[-] (90pt,13pt)++(-20:26pt)++(-140:39pt)--++(-90:13pt)--++(-140:13pt)--++(-90:13pt)--++(-140:13pt)--++(-90:13pt)--++(-140:39pt)--++(-90:13pt);

\draw[-] (90pt,13pt)++(-140:39pt)++(-20:13pt)++(-90:13pt)--+(-20:13pt)++(-90:13pt)--+(-20:2.3pt)++(-140:13pt)--+(-20:13pt)++(-140:13pt)--+(-20:13pt)++(-90:13pt)--+(-20:13pt)++(-140:13pt)--+(-20:13pt)++(-140:13pt)--+(-20:13pt)++(-90:13pt)--+(-20:2.3pt);
\draw[-] (90pt,13pt)++(-20:13pt)++(-140:52pt)--++(-90:26pt);
\draw[-] (90pt,13pt)++(-20:39pt)++(-140:39pt)++(-90:13pt)--++(-140:13pt)--++(-90:13pt)--++(-140:13pt)--++(-90:13pt)--++(-140:26pt)--++(-140:13pt)--++(-90:13pt);
\draw[-] (90pt,13pt)++(-140:39pt)++(-20:26pt)++(-90:13pt)--+(-20:13pt)++(-140:13pt)--+(-20:13pt)++(-90:13pt)--+(-20:13pt)++(-140:13pt)--+(-20:13pt)++(-90:13pt)--+(-20:13pt)++(-140:13pt)--+(-20:13pt)++(-140:13pt)--+(-20:13pt)++(-140:13pt)--+(-20:13pt)++(-90:13pt)--+(-20:13pt);
\draw[-] (90pt,13pt)++(-90:39pt)++(-20:39pt)++(-140:91pt)--+(-90:13pt)++(40:13pt)--+(-90:13pt)++(40:13pt)--+(-90:13pt)++(40:13pt)++(90:13pt)++(-20:13pt)--+(-90:26pt)++(-20:13pt)++(-90:13pt)--+(-90:13pt)++(-20:13pt)--+(-90:13pt);
\draw[-] (90pt,13pt)++(-90:52pt)++(-20:39pt)++(-140:104pt)--++(40:39pt)--++(-20:13pt)++(40:13pt)--++(-20:39pt);
\draw[-] (90pt,13pt)++(-90:39pt)++(-20:39pt)++(-140:65pt)--++(-20:13pt)--++(40:13pt)--++(-20:13pt);
\draw[-] (90pt,13pt)++(-90:39pt)++(-20:39pt)++(-140:65pt)--++(40:13pt);
\draw[-,line width=0.5mm,blue] (90pt,13pt)--++(-140:13pt)--++(-20:13pt)--++(-140:13pt)--++(-20:13pt)--++(-140:13pt)--++(-20:13pt)--++(-90:26pt)--++(-140:13pt)--++(-20:13pt)--++(-90:26pt);
\draw[-,line width=0.5mm,blue] (90pt,13pt)++(-140:26pt)++(-20:13pt)--++(-90:13pt)--++(-20:2pt);
\draw[-,line width=0.5mm,blue] (90pt,13pt)++(-140:39pt)++(-20:26pt)--++(-90:13pt)--+(-20:13pt)++(-140:13pt)++(-20:13pt)++(-90:13pt)--+(-90:13pt)++(-90:13pt)--++(-20:13pt)--++(-140:13pt)--++(-90:13pt);
\draw[-] (90pt,6.5pt)++(-140:65pt)++(-20:63pt)++(-90:39pt) node[scale=0.75]{$\textcolor{violet}{0}$};
\draw[-,line width=0.5mm,blue] (90pt,13pt)++(-140:52pt)++(-20:52pt)++(-90:52pt)--++(-140:13pt)--++(-20:13pt) ;
\draw[-,line width=0.5mm,blue] (90pt,13pt)++(-140:13pt)--++(90:13pt)--++(40:13pt)--++(-90:13pt);
\draw[-] (90pt,6.5pt)++(160:6.5pt)++(90:5.5pt)++(40:1pt) node[scale=0.75]{$\textcolor{green!50!black}{0}$};
\end{tikzpicture}
\caption{An example of a plane $\s$-partition.}
\label{3d}
\end{figure}

 One should think that each horizontal layer corresponds to a basis vector in the corresponding Fock space. In our example, we have 4 layers, each being an $\s$-partition:
\begin{align}\label{layers ex}
\ket{(5,5,4,4),(5,5,5,1)}^+_0\otimes \ket{(4,4,3,2),(4,3,2)}^+_0\otimes\ket{(3,2,1),(4,1,1)}^+_0\otimes\ket{(3,2,1),(4)}^+_0.
\end{align}
In the pure MacMahon module, we also multiply by the tail consisting of pairs of empty partitions. The tail is not shown in the picture.

Note the following features. The black and the blue numbers indicate the colors of the box. All boxes stacked vertically on each other carry the same color. The box of color zero with coordinates $(0,0,0)$ is the bottom box of the column below the zero box at the top of the picture (not visible).

The blue surface (broken plane) indicates the boundary between $\bs\lambda^{(i)}$ and $\bs\mu^{(i)}$. 

The blue number $i$ indicates that this box is $i$-convex (removable). The green number $i$ indicates that there is an  $i$-concave box, which could be attached along the facet where the green $i$ is written.

We also have violet zeroes indicating that there is a convex box of color $0$ on the top of that facet considered as part of the $\bs \mu$ partitions. We now define the degree of each violet zero as the number of (imaginary) violet zeros above it if each layer was treated separately. In our example, both violet zeroes have degree 2.

The combinatorial rules for plane $\s$-partitions are as follows. Each layer should satisfy the rules described before for the Fock module, see Figure \ref{s-tab}. Every box is on top of an existing box.
There is a vertical repetition rule. The facets on the right of the blue surface cannot have boxes of color $i$ on the top of each other if $s_{i+1}=-1$. The facets on the left of the blue surface cannot have boxes of color $i$ on top of each other if $s_{i}=-1$. In our example, we cannot have boxes of colors $3,4$ on top of each other on facets to the right of the blue surface, and $0,4$ on the left. Note that this is exactly the opposite of the horizontal rule: you can repeat a box vertically on a facet if and only if you cannot repeat it horizontally.

The rules for the action of generators of $\Es$ are essentially the same.
The $K_i^\pm(z)$ act diagonally with the eigenvalues given by the product over convex and concave boxes on the right of the blue surface given by \eqref{V eigen}, and by \eqref{W eigen} to the left of that. Note that the latter case includes the violet convex boxes of the degree defined above. In addition, in the eigenvalue of $K_0^\pm(z)$, the numerator of $\psi_1$ corresponding to the very top concave zero has to be replaced by the factor $(K-K^{-1}u/z)$.

The operators $F_i(z)$ add boxes of color $i$. Choose a box $\Box$ of color $i$ which can be added without breaking the combinatorial rules. Suppose it is on the $j$-th level (that is it has coordinates $(a,b,j)$).
Then, the corresponding matrix coefficient for adding this box is the product of the matrix coefficient of $F_i(z)$ adding that box to the $j$-th layer (as described before) $\ket{\lambda^{(j)},\mu^{(j)}}$ in the corresponding Fock space multiplied by
 the value of $K_i^+(z)$ acting on the diagram obtained by removing all boxes with coordinates $(a',b',j')$ with $j'\geq j$ with an additional sign if $|i|=-1$ and the total parity of first $j-1$ layers is also odd.

Similarly, operators $E_i(z)$ remove boxes of color $i$. Choose a box $\Box$ of color $i$ which can be removed without breaking the combinatorial rules. Suppose it is on the $j$-th level (that is it has coordinates $(a,b,j)$).
Then, the corresponding matrix coefficient for removing this box is the product of the matrix coefficient of $E_i(z)$ removing that box from the $j$-th layer (as described before) $\ket{\lambda^{(j)},\mu^{(j)}}$ in the corresponding Fock space multiplied by the value of $K_i^-(z)$ acting on the diagram obtained by removing all boxes with coordinates $(a',b',j')$ with $j'\leq j$ with possibly an additional sign as in $F_i(z)$ case.

\medskip

Finally, we show the picture of the vacuum state in the MacMahon module with boundaries $\mc M^{(\bs\gamma,\bs\epsilon),\alpha}(u)$, constructed in Theorem \ref{MacMahon with boundaries}, in Figure \ref{3d boundary}.
\begin{figure}[ht]
\centering
\begin{tikzpicture}
\filldraw[fill=gray, fill opacity=0.5]  (90pt,0pt)--++(-30:39pt)--++(-90:39pt)--++(-30:52pt)--++(-90:52pt)--++(-150:39pt)--++(150:52pt)--++(-150:65pt)--++(150:39pt)--++(90:52pt)--++(30:52pt)--++(90:39pt)--++(30:52pt);
\draw[-](90pt,0pt)++(-30:39pt)--++(-150:26pt)--++(150:13pt)--++(-150:26pt)--++(150:26pt);
\draw[-](90pt,0pt)++(-30:39pt)++(-150:26pt)--+(-90:52pt)++(150:13pt)--+(-90:78pt)++(-150:26pt)--++(-90:78pt)--+(30:26pt);
\draw[-](90pt,0pt)++(-30:39pt)++(-90:39pt)--++(-150:13pt)--++(-90:13pt)--++(-150:13pt)--++(-90:26pt)--++(-150:13pt)--++(-90:13pt);
\draw[-](90pt,0pt)++(-30:91pt)++(-90:39pt)--++(-150:13pt)--++(-90:13pt)--++(-150:13pt)--++(-90:26pt)--++(-150:13pt)--++(-90:13pt);
\draw[-](90pt,0pt)++(-30:91pt)++(-90:39pt)++(-150:13pt)--+(150:52pt)++(-90:13pt)--+(150:52pt)++(-150:13pt)--+(150:52pt)++(-90:13pt)--+(150:52pt)++(-90:13pt)--+(150:65pt)++(-150:13pt)--+(150:65pt);
\draw[-](90pt,0pt)++(-30:91pt)++(-90:52pt)++(-150:13pt)--+(30:13pt)++(-150:13pt)++(-90:13pt)--+(30:26pt)++(-90:13pt)--+(30:26pt)++(-90:13pt)--+(90:13pt)++(30:13pt)--+(90:39pt);
\filldraw[fill=blue!50!gray, fill opacity=0.35] (90pt,0pt)++(-150:52pt)++(-90:39pt)--++(-30:13pt)--++(-90:13pt)--++(-30:13pt)--++(-90:26pt)--++(30:26pt)--++(-30:13pt)--++(90:26pt)--++(30:13pt)--++(90:13pt)--++(30:13pt)--++(-30:52pt)--++(-90:52pt)--++(-150:39pt)--++(150:52pt)--++(-150:65pt)--++(150:39pt)--++(90:52pt)--++(30:52pt);
\draw[-](90pt,0pt)++(-150:52pt)++(-90:39pt)++(-30:13pt)--+(-150:52pt)++(-90:13pt)--+(-150:52pt)++(-30:13pt)--+(-150:52pt)++(-90:13pt)--+(-150:52pt)++(-90:13pt)--+(-150:52pt)++(-30:13pt)++(30:13pt)--+(-150:65pt);
\draw[-](90pt,0pt)++(-150:104pt)++(-90:39pt)--++(-30:13pt)--++(-90:13pt)--++(-30:13pt)--++(-90:26pt)--++(-30:13pt)--++(-90:13pt);
\draw[-](90pt,0pt)++(-150:104pt)++(-90:52pt)++(-30:13pt)--+(-90:39pt)++(-30:13pt)++(-90:26pt)--+(-90:13pt);
\draw[-](90pt,0pt)++(-150:104pt)++(-90:52pt)--+(-30:13pt)++(-90:13pt)--+(-30:26pt)++(-90:13pt)--+(-30:26pt);
\draw[-](90pt,0pt)++(-150:13pt)--+(-30:39pt)++(-150:13pt)--+(-30:26pt)++(-150:13pt)--+(-30:26pt);
\draw[-](90pt,0pt)++(-30:13pt)--+(-150:52pt)++(-30:13pt)--+(-150:26pt);
\draw[-](90pt,0pt)++(-30:39pt)++(-90:13pt)--++(-150:26pt)--++(150:13pt)--++(210:26pt)--++(150:26pt);
\draw[-](90pt,0pt)++(-30:39pt)++(-90:26pt)--++(-150:26pt)--++(150:13pt)--++(210:26pt)--++(150:26pt);
\draw[-](90pt,0pt)++(-30:39pt)++(-90:39pt)++(-150:13pt)--++(-150:13pt)--++(150:13pt)--++(210:26pt)--++(150:13pt);
\draw[-](90pt,0pt)++(-30:39pt)++(-90:52pt)++(-150:26pt)--++(150:13pt)--++(210:26pt);
\draw[-](90pt,0pt)++(-30:39pt)++(-90:65pt)++(-150:26pt)--++(150:13pt)--++(210:26pt);
\draw[-](90pt,0pt)++(-150:52pt)++(-30:13pt)--+(-90:39pt)++(-30:13pt)++(30:13pt)--+(-90:78pt)++(-30:13pt)++(30:26pt)--+(-90:39pt);
\draw[-] (90pt,-6.5pt) node[scale=0.75]{$0$}++(-30:13pt) node[scale=0.75]{$1$}++(-30:13pt) node[scale=0.75]{$2$}++(-30:6.5pt)++(-90:33pt)
node[scale=0.75]{$\textcolor{green!50!black}{3}$}++(-150:13pt)++(-90:13pt)
node[scale=0.75]{$\textcolor{green!50!black}{2}$}++(150:13pt)++(-150:13pt)++(-90:26pt)
node[scale=0.75]{$\textcolor{green!50!black}{0}$}++(-30:13pt)++(-150:6.5pt)
node[scale=0.75]{$\textcolor{violet}{0}$}++(-150:13pt)++(-30:6.5pt)++(-90:6pt)
node[scale=0.75]{$\textcolor{green!50!black}{0}$}++(-30:13pt)++(-150:6.5pt)
node[scale=0.75]{$\textcolor{violet}{0}$};
\draw[-] (90pt,-6.5pt)++(-150:13pt) node[scale=0.75]{$4$}++(-30:13pt) node[scale=0.75]{$0$}++(-30:13pt) node[scale=0.75]{$1$};
\draw[-] (90pt,-6.5pt)++(-150:26pt) node[scale=0.75]{$3$}++(-30:13pt) node[scale=0.75]{$4$}++(-30:13pt);
\draw[-] (90pt,-6.5pt)++(-150:39pt) node[scale=0.75]{$2$}++(-30:13pt) node[scale=0.75]{$3$}++(-30:13pt);
\draw[-] (90pt,-6.5pt)++(-150:45.5pt)++(-90:33pt)  node[scale=0.75]{$\textcolor{green!50!black}{1}$}++(-30:13pt)++(-90:13pt)  node[scale=0.75]{$\textcolor{green!50!black}{2}$};
\draw[-,line width=0.5mm,blue] (90pt,0pt)--++(-150:13pt)--++(-30:13pt)--++(-150:13pt)--++(-30:13pt)--++(-90:78pt)--++(-150:13pt)--++(-30:13pt)--++(-90:13pt)--++(-150:13pt)--++(-30:13pt);
\draw[-,line width=0.5mm,blue] (90pt,0pt)++(-150:39pt)++(-30:26pt)--++(-90:78pt)++(90:78pt)--+(30:13pt)++(-90:13pt)--+(30:13pt)++(-90:13pt)--+(30:13pt)++(-90:13pt)--+(30:13pt)++(-90:13pt)--+(30:13pt)++(-90:13pt)--+(30:13pt)++(-90:13pt)++(-150:13pt)++(-30:13pt)--++(30:13pt)++(-150:13pt)--++(-90:13pt);
\draw[-](90pt,0pt)++(-30:52pt)++(-150:78pt)++(-90:52pt)--++(-30:13pt)--+(-90:13pt);
\end{tikzpicture}
\caption{The vacuum  of $\mc M^{(\bs \gamma,\bs \epsilon),\bs \alpha}_0$, $\bs\gamma=(3,2)$, $\bs\epsilon=(3,2)$, $\bs\alpha=(3,2,2,1)$. }
\label{3d boundary}
\end{figure}

For the vertical boundary (shown in gray), we chose the admissible self-comparable pair of partitions $(\bs\gamma,\bs\epsilon)=((3,2),(3,2))$. Note that self-comparable combinatorially means that the pair of partitions can be placed on top of itself.  In particular, in accordance with the general rule, looking from the right, we can see on the facets only boxes of colors $1$ and $2$ (more importantly, we do not see any boxes of colors $3,4$). And looking from the left, we see only boxes of colors $(2,3)$ (no boxes of colors $0,4$).

For the horizontal boundary (shown in dark blue), we chose the partition $\bs \alpha=(3,2,2,1)$. 

Then, the boxes are added to that shape, according to the same rules, and the action of the generators is described in the same way as well.

We again showed the broken blue surface and concave boxes. Then, the eigenvalue of $K_0^\pm(z)$ is $\psi(q_2^3 u/z)f^{(-1)}(K,u/z)$, the eigenvalue of $K_1^\pm(z)$ is $\frac{(q-q^{-1}q_1^{-1}u/z)}{(q^4-q^{-4}u/z)}q_1^{1/2}$, the eigenvalue of $K_2^\pm(z)$ is $\psi(q_1^{-2}q_2^{-2})\psi(q_3q_2^{-2})$, the eigenvalue of $K_3^\pm(z)$ is $\frac{(1-q_2^{-1}q_3^{2}u/z)}{(1-q_2^{-4}q_1^{-3}u/z)}q_3^{1/2}$, and the eigenvalue $K_4^\pm(z)$ is $1$.

\medskip

{\it Non-generic levels and MacMahon modules with prohibited boxes.}
In Theorems \ref{pure thm 1 Macmahon} and \ref{gamma thm Macmahon}, we assumed that the level $K$ is generic. More precisely, $(K-K^{-1}u/z)$ does not vanish when multiplied by the delta function coming from the action of $E_i(z)$  of $F_i(z)$.

If the matrix coefficient of $F_i(z)$ in a MacMahon module $\mc M(K;u)$ corresponding to an addition of a box contains $\delta(v/z)$, then,  at level $K$ satisfying $K^2=u/v$, the MacMahon module contains a submodule consisting of all plane-super partitions which do not 
contain this box.

\medskip

Let $s_0=-1$. Then, at $K=q$ the MacMahon module $M_0(q;u)$ has the same highest weight with the Fock space $\mc F_{\Lambda_0}(u)$. The module $\mc F_{\Lambda_0}(u)$ is a subquotient of  $M_0(q;u)$. Indeed, $\s$-partitions are plane $\s$-partitions that do not contain the box with coordinates $(0,0,1)$.

Similarly, at $K=q^{-1}$ we prohibit the box with coordinates $(1,1,0)$. Then, the irreducible quotient is the Fock module $\mc F_{-\Lambda_0}(u)$. The columns of plane $\s$-partitions of $\mc M_0(q^{-1},u)$ with prohibited box $(1,1,0)$ are diagonals of $\s$-partitions of $\mc F_{-\Lambda_0}(u)$.

\medskip

{\it The proof of modules being tame.}
We need to show that no two pictures have the same eigenvalues of $K_{i}^\pm(z)$. 

We start with $\mc F_{\Lambda_0}(u)$, see Figure \ref{s-tab}. First, the $q$-content \eqref{q content} and the color uniquely determine the box. The poles of $K_i^\pm(z)$ are exactly the $q$-content of the boxes which can be added or removed. Note that we can always add a box to the first row and to the first column (unless our picture is empty).  Thus, looking at the poles, we can determine the length of the first column and of the first row. Now, delete the first row, the first column, change the $K_i^\pm(z)$ accordingly, and repeat. This restores the shape uniquely.

Other Fock spaces are treated similarly.

\medskip

Consider now the MacMahon module $\mc M_0(u)$, see Figure \ref{3d}. It is sufficient to show how to recover the bottom layer. Note that, if a color and $q$-content of the box is given, then the position of the box is uniquely determined up to a multiple of $(1,1,1)$. Thus, if $i$ is the largest such that some $K_s^\pm(z)$ has a pole with coordinated $(i,0,0)$, then the first row of the first layer has $i-1$ boxes. Similarly, if
$j$ is the largest such that some $K_s^\pm(z)$ has a pole with coordinated $(0,j,0)$, then the first column of the first layer has $j-1$ boxes. We again remove the first row and first column from the shape (ending to a possible non-admissible picture) and divide by the corresponding $K$-values. Then, we repeat and find the second row and second column in the same way.

More general MacMahon modules are treated similarly.

\qed

\section{Characters}\label{char sec}
All $\Es$-modules we have constructed are weighted and graded.
In this section, we discuss their characters, see \eqref{character def}.

Set $p=\prod_{i\in \hat I} z_i$. 
We use the periodic convention  $z_{i+N}=z_i$.

Note that $z_i=e^{-\alpha_i}$, $i\in \hat{I}$, $p=e^{-\delta},$ and, combinatorially, the variable $z_i$ counts the number of boxes of color $i$.

The character of a module does not depend on the choice of the evaluation parameter $u$. We skip $u$ from our notation of all modules in this section, writing, for example, simply $V,W,\mc F_{\Lambda_i},\mc M_i$ for the modules $V(u)$, $W(u),\mc F_{\Lambda_i}(u),\mc M_i(u)$.

\subsection{Characters of vector and covector representations}
Consider the vector representation $V$. 
The grading of $V$ is defined up to a shift. Let us choose $\deg [u]_{-1}=(0,\dots,0)$. Then, the character is given by
\begin{align*}
&\chi(V)=
(\sum_{j=-1}^\infty \prod_{i=0}^j z_i+\sum_{j=-1}^{-\infty} \prod_{i=-1}^j z_i^{-1})=\\
& (1+z_0+z_0z_1+\dots+z_0z_1 \dots z_{N-2})\big(\frac1{1-p}+\frac{p^{-1}}{1-p^{-1}}\big)=\big(\sum_{j=-1}^{N-2}\prod_{i=0}^j z_i\big)\delta(p).
\end{align*}
Similarly, choose $\deg[u]^0=(0,\dots,0)$, then 
\begin{align*}
&\chi(W)=
\big(\sum_{j=0}^{N-1}\prod_{i=1}^j z_{-i}\big)\delta(p).
\end{align*}
We have $\chi(V)(z_0^{-1},\dots,z_{N-1}^{-1})=\chi(W)(z_0,\dots,z_{N-1})$.

\subsection{Characters of Fock spaces} We describe a straightforward way to write a closed fermionic type character formula for the Fock spaces we have constructed.

We choose the degree of the highest weight vector to be $(0,\dots,0)$.
We use the standard notation $(p)_s=\prod_{i=1}^s(1-p^i)$, where $s\in\Z_{\geq 0}$.

First, we consider the pure cases.

Let $s_0=-1$.  For a vector $\ket{\bs \lambda,\bs \mu}_0^+\in\mc F_{\Lambda_0}$, we have defined sequences of non-negative integers $a=(a_0,\dots,a_{N-1})$, $b=(b_0,\dots,b_{N-1}) \in \Z^{|\hat I|}_{\geq 0}=\Z_{\geq 0}^N$, see \eqref{a and b}. Note that $a_i$ is the number of (non-zero) rows of $\bs \lambda$ which end with a box of color $i$. In particular, we keep the periodicity conventions: $a_{i+N}=a_i$ and $b_{i+N}=b_i$.  Next, we append $\bs\mu$ with a number of zero parts so that the number of parts of $\bs\mu$ is equal to the number of non-zero parts of $\bs\lambda$. By definition, such empty parts of $\bs\mu$ always end with a box of color $0$. Then, $b_i$ is the number of columns (number of parts of $\bs \mu$) that end with a box of color $i$. In particular, we have $|a|=\sum_i a_i=|b|=\sum_i b_i$.

For example, for Figure \ref{s-tab} we have $a=(1,0,1,2,2)$, $b=(4,0,0,1,1)$. 

We sum up the contribution of vectors with the same $a$ and $b$ to the character and obtain the following lemma. 

For $i,j\in\Z$, $i\leq j$, set
$$
z_{ij}=\prod_{s=i}^jz_s.
$$
We have $z_{i,i+N-1}=p$.

\begin{lem}
The character of the module $\mc F_{\Lambda_0}$ is given by

\begin{align}\label{ch F0}
    \chi(\mc F_{\Lambda_0})=\sum_{\substack{a,b\in\Z_{\geq 0}^N\\ |a|= |b|}}p^{-b_0}\prod\limits_{j=0}^{N-1}z_{0,j}^{a_j}\,z_{j,N-1}^{b_j}\,\dfrac{  p^{ a_j(a_j-1)(1+s_{j+1})/4+b_j(b_j-1)(1+s_{j})/4} }{(p)_{a_j}(p)_{b_j}}.
\end{align}

\end{lem}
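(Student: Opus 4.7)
The plan is to enumerate the basis $\{\ket{\bs\lambda, \bs\mu}_0^+\}$ of $\mc F_{\Lambda_0}$ by the pair of end-color statistics $(a, b) \in \Zg^N \times \Zg^N$ from \eqref{a and b}, decompose $\chi(\mc F_{\Lambda_0})$ as a sum over $(a, b)$, and factor each summand as a product of mutually independent row and column contributions indexed by the $N$ color classes.

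First we pad $\bs\mu$ with zero parts up to length $\ell(\bs\lambda)$; since padded zeros satisfy $-0 \equiv 0 \pmod N$ and hence contribute to $b_0$, this convention gives $|a| = |b| = \ell(\bs\lambda)$ and determines the summation range. A nonempty row of $\bs\lambda$ ending at color $j$ has length $j + 1 + k N$ for a unique $k \in \Zg$ and contributes the monomial $z_{0,j} \cdot p^k$ to the grading, because it contains $k+1$ boxes of each color $0, \dots, j$ and $k$ boxes of each color $j+1, \dots, N-1$. Similarly, a nonempty column of $\bs\mu$ ending at color $j \neq 0$ has length $(N - j) + k N$ and contributes $z_{j, N-1} \cdot p^k$, while a (possibly empty) column with end-color $0$ has length $k N$ and contributes $p^k$.

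Within a fixed color class, the heights $(k_1, \dots, k_{a_j})$ of the rows form a weakly decreasing sequence of nonnegative integers in which the equality $k_m = k_{m+1}$ is allowed exactly when $s_{j+1} = -1$; this follows from the $\s^-_+$-partition rule together with the fact that the end color determines the length modulo $N$. The same analysis applies to the columns with $s_j$ replacing $s_{j+1}$. Rows of different end-colors never coincide in length and interleave freely, and likewise for columns, so the sub-partitions indexed by color classes are mutually independent. Applying the standard identities
\[
\sum_{k_1 \geq \cdots \geq k_n \geq 0} p^{k_1 + \cdots + k_n} = \frac{1}{(p)_n}, \qquad \sum_{k_1 > \cdots > k_n \geq 0} p^{k_1 + \cdots + k_n} = \frac{p^{n(n-1)/2}}{(p)_n},
\]
unified as $p^{n(n-1)(1+s)/4}/(p)_n$ with $s = \pm 1$, produces both the denominators $(p)_{a_j},(p)_{b_j}$ and the $p$-powers in \eqref{ch F0}.

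Assembling the row and column factors over all colors and summing over $(a,b)$ with $|a|=|b|$ gives \eqref{ch F0}, modulo one subtlety: writing the column contribution uniformly as $z_{j, N-1}^{b_j}$ for every $j$ inserts a spurious factor $z_{0, N-1} = p$ per $j = 0$ column, which is precisely what the prefactor $p^{-b_0}$ corrects. The main bookkeeping obstacle is this asymmetric handling of $j = 0$ for columns, which is the combinatorial reflection of the tail $[u q_2^{t}]_{-1} \otimes [u q_2^{t}]^0$ used in the inductive Fock construction of Section \ref{sec pure fock 1}.
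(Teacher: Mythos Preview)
Your proof is correct and follows the same strategy as the paper: group basis vectors by the end-color statistics $(a,b)$, observe that within each color class the row (resp.\ column) lengths are determined by a weakly or strictly decreasing sequence of nonnegative integers according to the sign $s_{j+1}$ (resp.\ $s_j$), and apply the two standard partition identities to produce the factors $p^{a_j(a_j-1)(1+s_{j+1})/4}/(p)_{a_j}$ and $p^{b_j(b_j-1)(1+s_j)/4}/(p)_{b_j}$. The paper's proof is a two-sentence sketch that names the minimal-configuration monomial $m_{a,b}$ without computing it; you have supplied the missing details, in particular the clean justification that rows of different end-colors interleave without constraint (since they never coincide in length and the $\s^{-}_{+}$-condition only restricts \emph{equal} adjacent parts), and the explanation of the $p^{-b_0}$ correction coming from the padded zero columns.
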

\begin{proof} The sum over the vectors with the same $a$ and $b$ has the form $m_{a,b}/\prod_{i=0}^{N-1}(p)_{a_i}(p)_{b_i}$, where $m_{a,b}$ is the lowest configuration for the given $a,b$. The monomial $m_{a,b}$ is easy to compute and it depends on whether same rows ending at a box of color $i$ are allowed (that is $s_{i+1}=-1$) and whether same columns ending at a box of color $i$ are allowed (that is $s_i=-1$). 
\end{proof}

Similarly, we obtain analogues formulas for other pure cases.

\begin{lem}\label{lemma ch Fk}
The character of the module $\mc F_{s\Lambda_k}$ is given by
\begin{align}\label{ch Fk}
    \chi(\mc F_{s\Lambda_k})=\sum_{\substack{a,b\in\Z_{\geq 0}^N\\ |a|= |b|}}p^{-b_k}\prod\limits_{j=k}^{N+k-1}z_{k,j}^{a_j}\,z_{j,N+k-1}^{b_j}\,\dfrac{  p^{ a_j(a_j-1)(1+ss_{j+1})/4+b_j(b_j-1)(1+ss_{j})/4} }{(p)_{a_j}(p)_{b_j}}.
\end{align}

\qed
\end{lem}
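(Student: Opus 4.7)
The plan is to adapt the proof of \eqref{ch F0} with appropriate color shifts. A basis vector of $\mc F_{s\Lambda_k}$ is parametrized by a pair $(\bs\lambda,\bs\mu)$ with $\bs\lambda$ a $\s^{-s}_+$-partition, $\bs\mu$ a $\s^{-s}_-$-partition, and $\ell(\bs\lambda)\geq\ell(\bs\mu)$. After appending $\bs\mu$ with zero parts up to length $\ell(\bs\lambda)$, I would define $a_j$ (resp.\ $b_j$) as the number of rows of $\bs\lambda$ (resp.\ columns of the extended $\bs\mu$) whose final box has color $j$, with the convention that a zero column is counted as ending at color $k$; the identity $|a|=|b|$ is then immediate.

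I would next group basis vectors by their $(a,b)$-statistics. Fix $j\in\{k,\dots,N+k-1\}$ and consider the $a_j$ rows of $\bs\lambda$ that terminate at color $j$. The defining property of a $\s^{-s}_+$-partition implies that two consecutive such rows may have equal length iff $s_{k+\lambda_i}=-s$, i.e.\ $ss_{j+1}=-1$. The minimum-length row is the one-cycle row of length $j-k+1$ (colors $k,k+1,\dots,j$), and excess length occurs in multiples of $N$. Summing over the ways to distribute this extra length across the $a_j$ rows yields the standard generating function
\begin{align*}
\frac{p^{a_j(a_j-1)(1+ss_{j+1})/4}}{(p)_{a_j}},
\end{align*}
whose exponent vanishes exactly when equalities are allowed, and equals $a_j(a_j-1)/2$ otherwise. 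A parallel analysis of columns terminating at color $j$, with the parity condition now governed by $s_j$, yields the factor $p^{b_j(b_j-1)(1+ss_j)/4}/(p)_{b_j}$.

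The minimum-configuration monomial is then read off from the color description: a minimum row ending at $j$ contains exactly one box of each color in the arc $k,k+1,\dots,j$, contributing $z_{k,j}$, while a minimum column ending at $j$ contains one box of each color in the arc $j,j+1,\dots,N+k-1$, contributing $z_{j,N+k-1}$. Hence the total contribution of minima is $\prod_{j=k}^{N+k-1}z_{k,j}^{a_j}z_{j,N+k-1}^{b_j}$. The $b_k$ appended zero columns contribute no actual boxes but would spuriously be counted with weight $z_{k,N+k-1}^{b_k}=p^{b_k}$; the correction is the global factor $p^{-b_k}$ visible in \eqref{ch Fk}. Summing the resulting $(a,b)$-contributions over all $(a,b)\in\Z_{\geq 0}^N\times\Z_{\geq 0}^N$ with $|a|=|b|$ yields \eqref{ch Fk}.

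The only nontrivial bookkeeping is the identification, for each parity $s$ and starting color $k$, of the minimum admissible pair with prescribed statistics and of the correct repetition rule; once these are in place the sum factorizes and the remaining manipulations are standard. The specialization $s=1$, $k=0$ recovers \eqref{ch F0}, providing a sanity check.
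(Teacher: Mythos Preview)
Your argument is essentially the paper's own: stratify by the ending-color statistics $(a,b)$, identify the minimal configuration in each stratum, and note that the residual sum factorizes into independent $q$-series, one per color. The paper's proof of this lemma is literally the word ``similarly'' (referring back to the $k=0$, $s=1$ case with $s_0=-1$), so your write-up is in fact more explicit than what appears there.

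One imprecision is worth tightening. You write that ``the $b_k$ appended zero columns'' are corrected by $p^{-b_k}$, but $b_k$ counts \emph{all} columns ending at color $k$, including those of positive length $N,2N,\dots$. The identity $p^{-b_k}z_{k,N+k-1}^{b_k}=1$ is better read as saying that the minimum admissible length in this color class is $0$ rather than $N$. More subtly, you apply the repetition rule uniformly to the padded $\bs\mu$, treating zero columns as ordinary parts subject to ``equal iff $ss_k=-1$.'' Padding zeros are not, a priori, parts of a $\s^{-s}_-$-partition, so this needs a word of justification: the refined basis description given just after Theorem~\ref{pure thm 1} (the clause ``$\mu_j=\mu_{j+1}$ allowed if $s_{i-\mu_j}=-1$ or if $\mu_j=\lambda_{j+1}=0$'') is exactly what forces the zeros to obey the same parity rule, via the coupling with $\bs\lambda$. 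Once that is noted, your factorization goes through as written.
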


To give a feeling for the formulas, we write the first few terms in the principal specialization: $z_i=q$, $p=q^N$. The principal specialization counts the number of boxes ignoring the colors. We denote the principal specialization of $\chi(V)$ by $\chi_q(V)$.

Consider the example in Section \ref{sec: pictures}: $m=3$, $n=2$, and standard parity $s_1=s_2=s_3=1$, $s_4=s_0=-1$.
Then, we have 
\begin{align*}
\chi_q(\mc F_{\Lambda_0})=\chi_q(\mc F_{\Lambda_3})&=1+q+2q^2+4q^3+6q^4+10q^5+15q^6+22q^7+33q^8+48q^9+70q^{10}+\cdots \,,\\
\chi_q(\mc F_{\Lambda_1})= \chi_q(\mc F_{\Lambda_2})&= 1 + q + 2 q^2 + 3 q^3 + 5 q^4 + 8 q^5 + 12 q^6 + 19 q^7 + 28 q^8 + 41 q^9 + 60 q^{10}+\cdots\,,\\
\chi_q(\mc F_{\Lambda_4})&= 1 + q + 3 q^2 + 6 q^3 + 10 q^4 + 17 q^5 + 27 q^6 + 42 q^7 + 63 q^8 + 
 94 q^9 + 139 q^{10}+\cdots\,.
\end{align*}

Now, we turn to the non-pure cases.

The module $\mc F_{s_{k}( r\Lambda_{k}-(r+1)\Lambda_{k-1})}$ is constructed by taking the tensor product of $\mc F_{-s_{k}(\Lambda_{k-1})}$ with $r$ extra factors $V(u)$, see Theorem \ref{non pure 1 0,r} and Figure \ref{nu mu la V}.

\begin{lem}
The character $\chi(k,r)$ of the module $\mc F_{s_{k+1}( r\Lambda_{k+1}-(r+1)\Lambda_{k})}$ is given by
\begin{align}\label{ch Fkr}
    \chi(k,r)=\sum_{\substack{a,b\in\Z_{\geq 0}^N \\ |a|= |b|+r}}p^{-b_k}z_k^{-r}\prod\limits_{j=k}^{N+k-1}z_{k,j}^{a_j}\,z_{j,N+k-1}^{b_j}\,\dfrac{ p^{ a_j(a_j-1)(1-s_{k+1}s_{j+1})/4+b_j(b_j-1)(1-s_{k+1}s_{j})/4} }{ (p)_{a_j}(p)_{b_j}}.
\end{align}
\qed
\end{lem}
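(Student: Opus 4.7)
The plan is to repeat the strategy used for the pure Fock character (Lemma~\ref{lemma ch Fk}), with two book-keeping adjustments that correspond to the two differences between formulas \eqref{ch Fk} and \eqref{ch Fkr}: the constraint $|a|=|b|+r$ and the overall $z_k^{-r}$ shift. By Theorem~\ref{non pure 1 0,r} a basis vector is labeled by a triple $(\bs\nu,\bs\lambda,\bs\mu)$, where $\bs\nu=(\nu_1\geq\cdots\geq\nu_r\geq 0)$ sits above the $\bs\lambda$-rows of a basis element of $\mc F_{-s_{k+1}\Lambda_k}$ with the admissibility condition that $(\bs\nu+\bs 1,\bs\lambda)$ behaves as a single $\s^{-s_{k+1}}_+$-partition across the $\bs\nu/\bs\lambda$ divider. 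Pad $\bs\mu$ with zero parts so that $\ell(\bs\mu)=\ell(\bs\lambda)$ as in the pure case.

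I would group basis vectors by the pair $(a,b)\in\Z_{\geq 0}^N\times\Z_{\geq 0}^N$, where $a_j$ is the total number of rows among $\bs\nu$ and $\bs\lambda$ whose last box has color $j$, and $b_j$ is the number of columns of (padded) $\bs\mu$ ending at color~$j$. Because there are exactly $r$ more rows than columns, $|a|=|b|+r$; this replaces the condition $|a|=|b|$ of the pure case and is the only global constraint contributed by the extra $r$ vector factors. For each fixed $(a,b)$, the choice of actual row/column lengths decomposes as $r+\ell(\bs\lambda)$ row lengths from a partition with prescribed ending colors, independently of $\bs\mu$'s column lengths. The local repetition rule for rows ending at color $j$ is $s_{j+1}=s_{k+1}$ (same on the $\bs\nu$- and $\bs\lambda$-sides of the divider), so nothing new happens at the divider: the rows contribute a factor $p^{a_j(a_j-1)(1-s_{k+1}s_{j+1})/4}/(p)_{a_j}$ exactly as in the pure case. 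The analogous column factor is $p^{b_j(b_j-1)(1-s_{k+1}s_{j})/4}/(p)_{b_j}$.

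The color/monomial bookkeeping is the same as in the pure case: each row ending at color $j$ contributes $z_{k,j}$, each column ending at color $j$ contributes $z_{j,N+k-1}$, and the diagonal convention built into the Fock picture accounts for the prefactor $p^{-b_k}$. The only remaining point is the overall normalization. The highest weight vector $\ket{(\bs\emptyset,\bs\emptyset),\bs\emptyset}_{k,r}^{-s_{k+1}}$ has $a_k=r$, all other $a_j$ and all $b_j$ equal to zero, and in the raw sum it contributes the monomial $z_k^r$; since we declare its degree to be $(0,\dots,0)$, we insert the compensating factor $z_k^{-r}$ in front. Assembling these pieces produces formula~\eqref{ch Fkr}.

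The step I expect to require genuine verification (rather than routine transcription from the pure case) is the claim that the admissibility conditions across the $\bs\nu/\bs\lambda$ divider match those within $\bs\lambda$, so that the row statistics really do behave as a single count $a_j$ without an extra constraint at row $r$. This follows from comparing the inequality $\nu_r+1\geq\la_1$, with equality allowed exactly when $s_{k+\la_1}=s_{k+1}$, to the analogous $\la_i\geq\la_{i+1}$ rule inside $\bs\lambda$; the two rules agree, so no correction at the divider is needed. Everything else in the derivation is a direct adaptation of the pure-case counting argument.
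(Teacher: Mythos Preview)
Your proposal is correct and follows precisely the approach the paper has in mind; indeed the paper gives no proof beyond the \qed, relying on the pattern established in the pure case. Your identification of the two new ingredients—the constraint $|a|=|b|+r$ from the $r$ extra rows, and the $z_k^{-r}$ normalization—together with the verification that the $\bs\nu/\bs\lambda$ divider imposes no new constraint, is exactly the argument one needs. One small remark: your interpretation of $z_k^{-r}$ as a normalization of the highest weight vector is equivalent to (and perhaps more transparently phrased as) the observation that a $\bs\nu$-row ending at color $j$ contributes $z_{k+1,j}=z_k^{-1}z_{k,j}$ rather than $z_{k,j}$, so the uniform use of $z_{k,j}^{a_j}$ overcounts by exactly $z_k^r$.
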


Finally, we give the formula corresponding to Theorem \ref{non pure 2 0,r}  and Figure \ref{nu mu la +}.

\begin{lem} 
The character $\chi(k,r)$ of the module $\mc F_{s_k(r\Lambda_{k-1}-(r+1)\Lambda_k)}$ is given by
\begin{align}
    \chi(k,r)=\sum_{\substack{a,b\in\Z_{\geq 0}^N \\ |a|= |b|-r}}p^{-b_k}\prod\limits_{j=k}^{N+k-1}z_{k,j}^{a_j}\,z_{j,N+k-1}^{b_j}\,\dfrac{  p^{ a_j(a_j-1)(1-s_ks_{j+1})/4+b_j(b_j-1)(1-s_ks_{j})/4} }{(p)_{a_j}(p)_{b_j}}.
\end{align} \qed
\end{lem}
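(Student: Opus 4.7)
The plan is to mimic the combinatorial argument that yields Lemma~\ref{lemma ch Fk} and the analogous extra-rows formula \eqref{ch Fkr}: parameterize basis vectors by a pair of statistics $(a,b)\in\Z_{\geq 0}^N\times\Z_{\geq 0}^N$ recording how many rows and columns of the picture terminate at each color, group vectors by $(a,b)$, and sum the resulting contributions. Fix the grading by declaring the highest weight vector (all data empty) to have degree $\mathbf 0$.

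Recall from Section~\ref{sec non pure fock 2} that a basis of $\mc F_{s_k(r\La_{k-1}-(r+1)\La_k)}$ is given by vectors $\ket{\bs\la,(\bs\nu,\bs\mu)}_{k,r}^{-s_k}$ indexed by the underlying Fock data $(\bs\la,\bs\mu)$ together with $\ell(\bs\nu)=r$ extra columns attached on the left of the usual $\s$-partition, as depicted in Figure~\ref{nu mu la +}. For each such configuration, let $a_j$ denote the number of non-empty rows terminating in a box of color $j$ and $b_j$ the total number of columns terminating at color $j$, where we include both the $r$ extra covector factors and the zero-padding convention that every tail column is regarded as ending at color $k$. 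Because the picture contains exactly $r$ more columns than rows, this yields $|b|=|a|+r$, which reproduces the summation index of the stated formula.

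For fixed $(a,b)$, the sum over basis vectors with this statistic is carried out exactly as in the pure case. The combinatorial rules for $\mc F_{-s_k\La_k}$ of Section~\ref{sec: pictures} allow two consecutive rows terminating at color $j$ to have equal length if and only if $s_ks_{j+1}=1$, and similarly for columns with $s_ks_j=1$; in the opposite (strict) case, the minimal admissible shape for the $a_j$ rows (respectively $b_j$ columns) carries a triangular offset contributing $p^{a_j(a_j-1)/2}$ (respectively $p^{b_j(b_j-1)/2}$). Since $\tfrac{1}{2}(1-s_ks_{j+1})\in\{0,1\}$ is the indicator of the strict case, this matches the exponents $(1-s_ks_{j+1})/4$ and $(1-s_ks_j)/4$ in the statement. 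Free elongation within each row/column class then produces the denominators $(p)_{a_j}^{-1}$ and $(p)_{b_j}^{-1}$. The minimal monomial $m_{a,b}$ is read off the picture: a minimal non-empty row terminating at color $j$ covers colors $k,k+1,\dots,j$, contributing $z_{k,j}$, while a minimal non-empty column terminating at $j$ covers $j,j+1,\dots,N+k-1$, contributing $z_{j,N+k-1}$.

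The main obstacle, and the point requiring the most care, is the precise form of the overall prefactor. Each of the $r$ extra empty columns in the highest weight picture contributes one unit to $b_k$ and hence a factor $z_{k,N+k-1}=z_0z_1\cdots z_{N-1}=p$; to keep the vacuum at degree $\mathbf 0$, this must be compensated, and the compensation is supplied precisely by the $p^{-b_k}$ prefactor already present in the pure case (it also absorbs the tail/violet-zero contribution along the blue staircase). In the analogous extra-rows setting \eqref{ch Fkr} the empty rows instead contribute $z_k$ apiece, which is not absorbed by $p^{-b_k}$ and so forces an extra $z_k^{-r}$ correction; here no such extra factor is needed, and the absence of this correction is exactly what distinguishes the two formulas. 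Verifying this cancellation, together with the bookkeeping of the violet zeros in the presence of the $r$ covector factors, is the delicate step; once it is in place, assembling the $(a,b)$-summand reproduces the stated character.
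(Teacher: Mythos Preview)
Your argument is correct and is exactly the approach the paper intends: the lemma is stated with a bare \qed, meaning it is to be proved by the same combinatorial bookkeeping as Lemma~\ref{lemma ch Fk} and formula~\eqref{ch Fkr}, which is precisely what you carry out. Your identification of the key asymmetry---that the $r$ extra covector factors enter as $(\bs\nu,\bs\mu)$ (so the vacuum extra columns have length~$0$ and are absorbed by $p^{-b_k}$), whereas in \eqref{ch Fkr} the extra vector factors enter as $(\bs\nu+1,\bs\la)$ (forcing length~$1$ in the vacuum and the additional $z_k^{-r}$)---is exactly the point, and your check of the repetition rules $s_ks_{j+1}=1$, $s_ks_j=1$ against the $\s^{s_k}_\pm$-partition conditions of Sections~\ref{sec pure fock 1}--\ref{sec pure fock 2} is accurate.
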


\subsubsection{Comparison of characters}
We compare the formulas for characters of Fock spaces to the formulas of \cite{KW} for integrable level one $\widehat{\gl}_{m|n}$ modules given in the standard parity. 

Note that we normalized our formulas in such a way that the contribution of the highest weight vector is 1. In \cite{KW}, the contribution of the highest weight vector of weight $\Lambda$ is given by $e^\Lambda$. In our discussion below, we ignore this trivial discrepancy. 

Let $s_i=1$, $i=1,\dots,m$, and $s_i=-1$, $i=m+1,\dots,N$.
The character \eqref{ch F0} can be written as
\begin{align*}
    \chi(\mc F_{\Lambda_0})=\sum_{|a|= |b|}\dfrac{ \prod\limits_{j=0}^{N-1}z_j^{\sum_{i=j}^{N-1} a_i+\sum_{i=1}^{j} b_i} p^{\sum_{i=0}^{m-1} a_i(a_i-1)/2+\sum_{i=1}^{m}b_i(b_i-1)/2} }{\prod\limits_{i=0}^{N-1}(p)_{a_i}(p)_{b_i}}.
\end{align*}
And making the change of variables $z_i\mapsto e^{\epsilon_{i+1}-\epsilon_{i}}p^{\delta_{i,0}}$, we get
\begin{align*}
    \chi(\mc F_{\Lambda_0})&=\sum_{|a|= |b|}\dfrac{ \prod\limits_{i=0}^{N-1}e^{\epsilon_i(a_{i-1}-b_i)}  p^{\sum_{i=0}^{N-1}a_{i}+\sum_{i=0}^{m-1}\left( (a_i^2+b_{i+1}^2)-(a_i+b_{i+1})/2\right)}}{\prod\limits_{i=0}^{N-1}(p)_{a_i}(p)_{b_i}}\\
    &=\sum_{|a|= |b|}\dfrac{ \prod\limits_{i=0}^{N-1}e^{\epsilon_i(a_{i-1}-b_i)}  p^{\sum_{i=0}^{m-1} (a_i^2+b_{i+1}^2)+\sum_{i=m}^{N-1}(a_{i}+b_{i+1})/2}}{\prod\limits_{i=0}^{N-1}(p)_{a_i}(p)_{b_i}},
\end{align*}
which (after multiplication by $e^{\Lambda_0}$) coincides with the formula (3.14) in \cite{KW} with $s=0$, giving the character of the integrable $\widehat{\gl}_{m|n}$ module of highest level $\Lambda_0$  (where $a_i$ and $p$ correspond to $a_{i+1}$ and $q$ in \cite{KW}). 

Similarly, the character $\chi(\mc F_{(r+1)\Lambda_m-r\Lambda_{m+1}})$, $r\in\Z_{\geq 0}$ coincides with (3.14) in \cite{KW} with $s=m+r$. Also $\chi(\mc F_{(r+1)\Lambda_0-r\Lambda_{-1}})$, $r\in\Z_{\geq 0}$ coincides with   (3.14) in \cite{KW} with $s=-r$.

\medskip

 To compare the characters $\chi(\mc F_{\Lambda_k})$, $k=0,\dots,m,$ we first rewrite it as follows.

\begin{lem}
The character \eqref{ch Fk}  of the module $\mc F_{\Lambda_k}$ can be also written
\begin{align}\label{ch Fl}
    \chi(\mc F_{\Lambda_k})=\sum_{\substack{a,b\in\Z_{\geq 0}^N \\ |a|= |b|+k}}\dfrac{\prod\limits_{j=0}^{N-1}z_j^{\sum_{i=j}^{N-1} a_i+\sum_{i=1}^{j} b_i} p^{\sum_{i=0}^{N-2} a_i(a_i-1)(s_{i+1}+1)/4+\sum_{i=1}^{N-1}b_i(b_i-1)(s_{i}+1)/4} }{\prod\limits_{j=0}^{k-1}z_j^{k-j}\prod\limits_{i=0}^{N-1}(p)_{a_i}(p)_{b_i}}.
\end{align}
\end{lem}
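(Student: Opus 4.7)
The plan is to transform \eqref{ch Fk} into \eqref{ch Fl} by a two-step procedure: first, apply cyclic identities to rewrite the monomial factors in terms of the non-cyclic $z$-products appearing in \eqref{ch Fl}; second, perform a shift in the summation variables to match the condition $|a|=|b|+k$.

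For the first step, I would split the product $\prod_{j=k}^{N+k-1}$ in \eqref{ch Fk} into $\prod_{j=k}^{N-1}$ and $\prod_{j=N}^{N+k-1}$, and reindex the latter by $j = j'+N$ with $j'\in\{0,\dots,k-1\}$, using the periodicity of $a_j, b_j, s_j$. For the wrap-around factors, apply the identities
\begin{align*}
z_{k,\, j'+N}= z_{k, N-1}\cdot z_{0,\, j'}, \qquad  z_{j'+N,\, N+k-1}= z_{j',\, k-1}, \qquad z_{k, N-1}\cdot z_{0, k-1}=p,
\end{align*}
and for the ordinary range write $z_{j, N+k-1}=z_{j, N-1}\cdot z_{0, k-1}$. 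Collecting the powers of $z_{0, k-1}$ and $p$ and using $|a|=|b|$ to simplify, the monomial factor in \eqref{ch Fk} becomes
\begin{align*}
p^{\sum_{j=0}^{k-1}a_j-\sum_{j=1}^{k}b_j}\cdot \frac{\prod_{i=0}^{N-1}z_{0,i}^{a_i}\,\prod_{i=1}^{N-1}z_{i,N-1}^{b_i}}{\prod_{j=0}^{k-1}z_j^{k-j}},
\end{align*}
which already matches the monomial-and-denominator structure of \eqref{ch Fl}.

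The second step is to absorb the remaining $p$-factors (the new prefactor, the original $p^{-b_k}$, and the $p$-exponent $E'$ of \eqref{ch Fk}) into the form of the $p$-exponent $E$ of \eqref{ch Fl} via the change of variables $a_j\mapsto a_j+[j<k]$. This shift sends $|a|=|b|$ to $|a|=|b|+k$ as required. Under the shift, the terms $a_j(a_j-1)(1+s_{j+1})/4$ for $j<k$ transform as $a_j(a_j-1)\mapsto (a_j+1)a_j$, producing extra linear contributions that combine with the prefactor $p^{\sum_{j<k}a_j-\sum_{j=1}^{k}b_j-b_k}$, while the ``excess'' terms $a_{N-1}(a_{N-1}-1)(1+s_0)/4$ and $b_0(b_0-1)(1+s_0)/4$ appearing in the full cyclic sum in \eqref{ch Fk} (but excluded from $\sum_{i=0}^{N-2}$ and $\sum_{i=1}^{N-1}$ in \eqref{ch Fl}) account for the boundary discrepancy. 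The convention $1/(p)_{-1}=0$ extends the summation in the shifted variables to $a_j\geq 0$ without spurious terms.

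The main obstacle is the careful bookkeeping required to verify that all $p$-exponents, $z_{0,k-1}$-powers, and $(1+s_0)/4$-boundary contributions combine exactly to \eqref{ch Fl}; in particular, the change of variables is not a term-by-term bijection, and the identity relies on the global constraint $|a|=|b|$ (together with $|a|=|b|+k$ in the new variables) to reconcile the differing $p$-exponents between the two formulas.
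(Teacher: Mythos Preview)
Your approach is genuinely different from the paper's, and it has a real gap.

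The paper does \emph{not} manipulate \eqref{ch Fk} algebraically. Instead, it exhibits a combinatorial bijection on the basis of $\mc F_{\Lambda_k}$: starting from a pair $(\bs\lambda,\bs\mu)$ with $\ell(\bs\lambda)\ge\ell(\bs\mu)$, it moves the boxes in the first $k$ diagonals below the staircase from $\bs\mu$ to $\bs\lambda$, producing a new pair $(\bar{\bs\lambda},\bar{\bs\mu})$ with $\ell(\bar{\bs\lambda})\ge\ell(\bar{\bs\mu})+k$. This corresponds to the inclusion $\mc F_{\Lambda_k}(u)\subset V(u)\otimes\cdots\otimes V(uq_2^{k-1})\otimes\mc F_{\Lambda_0}(uq_2^k)$. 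Recomputing the character in the new parameterization yields \eqref{ch Fl} directly, with the denominator $\prod_{j=0}^{k-1}z_j^{k-j}$ coming from the contribution of the ``vacuum'' $(\bar{\bs\emptyset},\bar{\bs\emptyset})=((k,k-1,\dots,1),\emptyset)$.

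Your algebraic route runs into trouble at Step~2. Your Step~1 is essentially right (though the denominator $\prod_{j<k}z_j^{k-j}$ does not yet appear there; it only arises after the shift, from $\prod_{j<k}z_{0,j}$). Carrying out the shift $a_j\mapsto \hat a_j=a_j+[j<k]$ carefully in standard parity with $k\le m$ (so that $s_1=\cdots=s_k=1$), one finds that the transformed summand of \eqref{ch Fk} equals the summand of \eqref{ch Fl} multiplied by
\[
p^{-\sum_{j=1}^{k}b_j}\ \prod_{j=0}^{k-1}\bigl(1-p^{\hat a_j}\bigr),
\]
coming from $(p)_{\hat a_j-1}=(p)_{\hat a_j}/(1-p^{\hat a_j})$ and the leftover $p$-power. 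This factor is not identically $1$, it depends on both $\hat a$ and $b$, and the range of summation has changed from $\hat a_j\ge 1$ (for $j<k$) to $\hat a_j\ge 0$. Your invocation of ``$1/(p)_{-1}=0$'' does not address this: it would at best handle the range, not the multiplicative discrepancy. So the identity you need is a genuine multi-variable $q$-series identity, and you have not supplied one. Saying the two sides agree ``via the global constraint $|a|=|b|$'' is a hope, not an argument.

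In short: the paper's bijective proof sidesteps all of this bookkeeping by changing the combinatorial model rather than the formula. If you want an algebraic proof, you must either identify and prove the resulting $q$-series identity, or find a different substitution that is term-by-term; the shift $a_j\mapsto a_j+[j<k]$ alone does not do the job.
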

\begin{proof}
 Recall that the basis of $\chi(\mc F_{\Lambda_k})$, $k=0,\dots,m$, is parameterized by pairs $(\bs\la, \bs\mu)$ of partitions with $\ell(\bs \la)\geq \ell(\bs\mu)$, see Section \ref{sec non pure fock 1}. 
 
 Now we treat it differently.  Namely, we move all boxes in $k$ diagonals below the blue line from $\bs \mu$ to $\bs \lambda$ creating new partitions $\bs{\bar\la}$ and $\bs{\bar\mu}$. As the result,  $\bs{\bar\la}$ has $k$ more parts than and $\bs{\bar\mu}$.
 
 An example is shown in Figure \ref{la mu bar}. Initially, we have
 $\bs \lambda=(4,2,2,1,1,1)$ and $\bs \mu=(4,4,3,3,2,1)$ according to the dashed blue broken line. Now we have $\bs{\bar\la}=(7,5,5,4,4,4,3)$ and $\bs{\bar\mu}=(1,1,0,0)$ according to the solid broken blue line.

\begin{figure}[H]
\centering
\begin{tikzpicture}
\node at (0pt,7pt) {$\ket{\bs\lambda,\bs\mu}$:};
\tgyoung(79pt,0cm,3401,234,1234,0123,40123,:;40123,:::;012)
\draw[-,line width=0.5mm,green!50!black] (79pt,-26pt)-- ++(52pt,0pt);
\draw[-,line width=0.5mm,blue] (40pt,13pt)-- ++(0pt,-13pt)-- ++(13pt,0pt)-- ++(0pt,-13pt)-- ++(13pt,0pt)-- ++(0pt,-13pt)-- ++(13pt,0pt)-- ++(0pt,-13pt)-- ++(13pt,0pt)-- ++(0pt,-13pt)-- ++(13pt,0pt)-- ++(0pt,-13pt)-- ++(13pt,0pt)-- ++(0pt,-13pt)-- ++(13pt,0pt);
\draw[-,line width=0.5mm,blue,dashed] (79pt,13pt)-- ++(0pt,-13pt)-- ++(13pt,0pt)-- ++(0pt,-13pt)-- ++(13pt,0pt)-- ++(0pt,-13pt)-- ++(13pt,0pt)-- ++(0pt,-13pt)-- ++(13pt,0pt)-- ++(0pt,-13pt)-- ++(13pt,0pt)-- ++(0pt,-13pt)-- ++(13pt,0pt)-- ++(0pt,-13pt);
{\Yfillcolour{gray}
\Yfillopacity{.5}
\tgyoung(40pt,0pt,_3,:_2,::_1)}
\end{tikzpicture}
\caption{Two ways to see a diagram for $\ket{\bs\lambda,\bs\mu}\in \mathcal{F}_{\Lambda_3}(u)$.}
\label{la mu bar}
\end{figure}
Formally, partitions $(\bs{\bar{\la}}, \bs{\bar{\mu}})$ are defined as follows:
\begin{itemize}
    \item For $i=1,\dots,k$, let  $\bar{\la}_i:=\la_{i}+\#\{i-k\leq j<i\ |\ \mu_j\geq i-j\}+k+1-i$;
    \item For $i>k$, let  $\bar{\la}_i:=\la_{i}+\#\{i-k\leq j<i\ |\ \mu_j\geq i-j\}$;
    \item  For $i>0$, let $\bar{\mu}_i:=\max(\mu_{i}-k,0)$.
\end{itemize}
 Here, if $i>\ell(\bs\la)$, we set $\la_i=0$ and if $i>\ell(\bs\mu)$, we set $\mu_i=0$.

Clearly $\bs{\bar{\la}}$ has at least $k$ non-zero more parts than $\bs{\bar{\mu}}$. Thus a basis of $\mc F_{\Lambda_k}$ is labeled by pairs $(\bs{\bar{\la}}, \bs{\bar{\mu}})$, where $\bs{\bar{\la}}$ is an $\s^-_+$ partition,  $\bs{\bar{\mu}}$ is an $\s^-_-$ partition and $\ell(\bs{\bar{\la}})\geq \ell(\bs{\bar{\mu}})+k$.

In fact, this parameterization corresponds to the inclusion
$$
\mc F_{\Lambda_k}(u)\subset V(u)\otimes V(uq_2)\otimes \dots \otimes V(uq_2^{k-1})\otimes \mc F_{\Lambda_0}(uq_2^k).
$$

We append $\bs{\bar{\mu}}$ with zero parts, so that $\bs{\bar{\la}}$ has exactly $k$ extra parts compared to $\bs{\bar{\mu}}$, and define sequences $a,b$ as before. Then, we have $|a|-|b|=k$, and we compute the character in the same way as in Lemma \ref{lemma ch Fk}. Note the new factor in the denominator corresponding to the contribution of $(\bs{\bar{\emptyset}},\bs{\bar{\emptyset}})=((k,k-1,k-2,\dots,1),\emptyset)$.

\end{proof}

The formula \eqref{ch Fl} coincides with formula (3.14) in \cite{KW} with $s=k$.

\medskip

One can extend the horizontal subalgebra $U_q^{hor}\slh_{m|n}\in\Es$ to a subalgebra $U_q^{hor}\widehat{\gl}_{m|n}\in\Es$. Then, in all these cases, the Fock space is an irreducible module with respect to the horizontal $U_q\widehat{\gl}_{m|n}$ subalgebra.

\subsection{Characters of MacMahon modules}
We discuss the characters of MacMahon modules. 

The computation of the characters of general MacMahon modules is an interesting problem that can be studied by a combination of combinatorial and representation theoretical methods, see, for example, \cite{BFM}.
Here, we only discuss characters of pure MacMahon modules, where the answer is simple.

Recall that the parity of a node $i \in \hat{I}$ is given by $|i|=(1-s_is_{i+1})/2$.

For integers $i,j$, $i\leq j$, set $$\sigma(i,j)=(-1)^{\sum_{k=i}^{j} |k|}.$$ 
Note that $\sigma(i+N,j)=\sigma(i,j+N)=\sigma(i,j)$.

The following theorem enumerating plane $\s$-partitions is a generalization of the MacMahon formula on plane partitions.

\begin{thm}\label{Macmahon char thm}
The character of the module $\mc M_k$ is given by
\begin{align}\label{M char}
    \chi(\mc M_k)=\prod_{i\leq k} \prod_{j\geq k} (1-{\sigma(i,j)}z_{i,j})^{-\sigma(i,j)}.
\end{align}
\end{thm}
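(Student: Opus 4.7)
The plan is to follow the classical route to the MacMahon box formula --- layer-by-layer decomposition of plane partitions combined with the Cauchy identity --- in the super setting, applying the $\gl_{m|n}\otimes\gl_{p|q}$ super duality of \cite{S,CW}.

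First, I would rewrite $\chi(\mc M_k)$ as a sum over plane $\s$-partitions $((\bs\lambda^{(j)},\bs\mu^{(j)}))_{j\geq 1}$ satisfying $(\bs\lambda^{(j)},\bs\mu^{(j)})\geq_k(\bs\lambda^{(j+1)},\bs\mu^{(j+1)})$, and reinterpret this layered sum as a sum over pairs of super-semistandard tableaux of a common $\s$-shape, whose entries record the first layer in which each box appears. The super repetition rules (a color $i$ may repeat in a row if and only if it may not repeat on the corresponding vertical facet, and dually for columns) translate into the semistandard constraints for a super alphabet whose letters are declared even or odd according to the parities $s_i$, while the $q$-content \eqref{q content} of the boxes supplies the weight monomials $z_{i,j}$.

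Next, the summation over common shapes and tableau entries becomes an instance of the super Cauchy identity
\begin{equation*}
\sum_{\bs\lambda}s_{\bs\lambda}(x/u)\,s_{\bs\lambda}(y/v)=\prod_{i,j}\frac{(1+x_iv_j)(1+u_iy_j)}{(1-x_iy_j)(1-u_iv_j)},
\end{equation*}
where $s_{\bs\lambda}(x/u)$ denotes the super (hook) Schur function. Specializing the alphabets so that $(x/u)$ tracks the $\bs\lambda$-side with colors $j\geq k$ and $(y/v)$ tracks the $\bs\mu$-side with colors $i\leq k$ --- with the even/odd split dictated by $\s$ --- the right-hand side collapses to $\prod_{i\leq k\leq j}(1-\sigma(i,j)z_{i,j})^{-\sigma(i,j)}$. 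The sign $\sigma(i,j)$ appears naturally because the parity of a contiguous segment of simple roots $\alpha_i,\dots,\alpha_j$ equals $\sum_{s=i}^{j}|s|\bmod 2$, so each linear factor comes in bosonic $(1-\cdot)^{-1}$ or fermionic $(1+\cdot)^{+1}$ form according to $\sigma(i,j)$.

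The main obstacle will be the first step: carefully encoding the ``broken blue surface'' separating $\bs\lambda$ from $\bs\mu$ in each layer --- including the diagonal violet box and the renormalization that comes from the inductive limit defining $\mc M_k$ --- so that the weight transported to the tableaux is exactly the super Cauchy kernel. Once this bijective correspondence is nailed down, the product formula drops out of the identity above, the exponents $-\sigma(i,j)$ reading off automatically from the even/odd structure of the two alphabets.
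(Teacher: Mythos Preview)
Your proposal is correct and follows essentially the same route as the paper: a bijection between plane $\s$-partitions and pairs of reverse semistandard super-tableaux of a common shape, followed by the super Cauchy identity (equivalently, the $\gl_{m|n}\times\gl_{p|q}$ duality of \cite{S,CW}) to sum over shapes. The paper's bijection is slightly more direct than your ``first layer'' heuristic --- it simply sets $T^{(1)}_{ij}=\lambda^{(j)}_i$ and $T^{(2)}_{ij}=\mu^{(j)}_i+1$, so the tableau entries are the part lengths themselves and the super semistandard conditions on rows/columns read off immediately from the horizontal and vertical repetition rules --- but the overall strategy is identical.
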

\begin{proof} For simplicity of notation, we discuss $\chi({\mc M_0})$ with $s_0=-1$.

We start by constructing a bijection between the set of plane $\s$-partitions and the set of pairs of reverse semistandard Young super-tableaux of the same shape following the well-known algorithm in the even case, see  \cite[Chapter 7.20]{FS}.

Let $\tau$ be a partition. A tableau $T$ is an assignment from each box $(i,j)$ in the Young diagram of $\tau$ to a positive integer $T_{i,j}$.

A reverse semistandard Young super-tableau of parity $\s$ is a tableau $T$ with the restrictions:
\begin{enumerate}
    \item $T_{i,j}\geq T_{i,j+1}$, with equality allowed only if $s_{T_{i,j}}=1$;
    \item $T_{i,j}\geq T_{i+1,j}$, with equality allowed only if $s_{T_{i,j}}=-1$.
\end{enumerate}

Recall that, given a parity sequence $\s$, the parity sequence $\s'$ is defined by $s_i'=s_{1-i}$, for all $i\in \hat{I}$.

A pair $(T^{(1)}, T^{(2)})$ of reverse semistandard Young super-tableau is admissible if $T^{(1)}$ is of parity $\s$ and $T^{(2)}$ is of parity $\s'$.

To each admissible pair $(T^{(1)},T^{(2)})$, we assign a plane $\s$-partition $(\vec{\bs \lambda},\vec{\bs \mu })$ by the following rule.
$$
\lambda_i=(T^{(1)}_{1i},T^{(1)}_{2i},\dots),
\qquad \mu_i=(T^{(2)}_{1i}-1,T^{(2)}_{2i}-1,\dots).
$$
Clearly, this is a bijection.
Moreover, it preserves grading if we make the following assignments.  We set the degree of a box $(i,j)$ in the first partition to $\prod_{s=0}^{T_{ij}-1}z_s$.
We set the degree of a box $(i,j)$ in the second partition to  $\prod_{s=1}^{T_{ij}-1}z_{-s}$. Then, we set the degree of an admissible pair $(T^{(1)},T^{(2)})$ equal to the product of degrees of all boxes.

For example, the plane $\s$-partition in Figure \ref{3d} corresponds to the following pair of reverse super-tableaux, see \eqref{layers ex}:
\begin{figure}[H]
\centering
\begin{tikzpicture}
\node at (0pt,7pt) {$T^{(1)}$:};
\tgyoung(30pt,0cm,5433,5422,4311,42)
\node at (180pt,7pt) {$T^{(2)}$:};
\tgyoung(210pt,0cm,6555,6421,6321,21)
\end{tikzpicture}
\caption{An admissible pair of reverse semistandard super-tableaux}
\label{pair 3d}
\end{figure}
For the first tableau, $1,2,3$ are even and can be repeated in rows, while $4,5$ are odd and can be repeated in columns. For the second tableau, $3,4,5$ are even, and $1,2,6=1+5$ are odd.
The degree of this pair of tableaux is 
$z_0^{17}z_1^{18}z_2^{16}z_3^{14}z_4^{13}$.

Now,  fix a positive integer $L$.
Consider  the set of all reverse super-tableaux of a given shape $\bs\nu$ with $T_{ij}\leq L$ and parity $\s$. Then, this set is in bijection with the set of semistandard (not reverse) Young super-tableaux. The bijection is given by replacing $T_{ij}$ with $L+1-T_{ij}$. The set of semistandard Young super-tableaux parameterizes a basis of the polynomial $\gl_\s$ module $L_{\bs\nu}^{\s}$ with highest weight $\bs\nu$, \cite{BR}. 

Thus, admissible pairs of super-tableaux of a given shape $\bs\nu$ correspond to the $\gl_{\s}\otimes \gl_{\s'}$ module $L_\nu^{\s}\otimes L_\nu^{\s'}$.

Note that the parity and the grading are now counted backwards. Namely, a box with value $a$  in the first tableau has the parity $s_{L+1-a}$ and the grading  $\prod_{s=0}^{L-a}z_s$.  A box with value $a$  in the second tableau has the parity $s_{L+1-a}'$ and degree $\prod_{s=1}^{L-a}z_{-s}$.

By the super-version of $\gl_k\times\gl_l$ duality, see \cite[Theorem 2.1]{S}, \cite[Theorem 3.2]{CW}, see also \cite[Corollaries 10a,b]{BRe} for combinatorial treatment, we have
$$
\mathop{\bigoplus}_\nu L_\nu^{\s}\otimes L_\nu^{\s'}=\C[x_{ij}]_{i,j=1}^L.
$$
Here, the degree  of $x_{ij}$ is $\prod_{s=i-L}^ {L-j}z_s$. The parity of $x_{ij}$ is $\sigma(-i,j)$.

The character of the polynomial ring $\C[x_{ij}]_{i,j=1}^L$ in the $L\to\infty$ limit coincides with the right hand side of \eqref{M char}, and the theorem follows.
\end{proof}

In \eqref{M char}, the parity $\s$ is assumed to be $N$-periodic which correspond to $\Es$ Macmahon modules. Clearly, the same argument shows that the formula is valid for counting plane $\s$-partitions with arbitrary parity sequence $\s$.

One can think that $|i|$ is the parity of simple root $\alpha_i$, $\sigma(i,j)$ is the parity of the positive root $\alpha_i+\alpha_{i+1}+\dots +\alpha_j$, and $\chi_q(\mc M_k)$ is the character of a parabolic Verma module of the Lie superalgebra $\gl_{\infty}^{\s}$ with highest weight $(\dots,0,0,K,0,0,\dots)$, where $K$ is a generic complex number in the $k$-th position.

\medskip

Return to the example in Section \ref{sec: pictures}, that is, to the case of $m=3$, $n=2$, and standard parity $s_1=s_2=s_3=1$, $s_4=s_0=-1$.

Formula \eqref{M char} in the principal specialization takes the form
\begin{align*}
\chi_q(\mc M_0)= \chi_q(\mc M_3)= \prod_{k=0}^\infty \dfrac{(1+q^{5k+1})^{2k+1}(1+q^{5k+2})^{4k+2}(1+q^{5k+3})^{4k+2}(1+q^{5k+4})^{2k+1}}{(1-q^{5k+1})^{3k}(1-q^{5k+2})^{k}(1-q^{5k+3})^{k+1}(1-q^{5k+4})^{3k+3}(1-q^{5k+5
})^{5k+5}}\\ =1+q+2 q^2+5 q^3+8 q^4+16 q^5+29 q^6+50 q^7+88 q^8+150 q^9+254 q^{10}+\cdots,
\end{align*}

\begin{align*}
\chi_q(\mc M_1)= \chi_q(\mc M_2)=  \prod_{k=0}^\infty \dfrac{(1+q^{5k+1})^{2k}(1+q^{5k+2})^{4k+1}(1+q^{5k+3})^{4k+3}(1+q^{5k+4})^{2k+2}}{(1-q^{5k+1})^{3k+1}(1-q^{5k+2})^{k+1}(1-q^{5k+3})^{k}(1-q^{5k+4})^{3k+2}(1-q^{5k+5
})^{5k+5}}\\ =1 + q + 3 q^2 + 6 q^3 + 12 q^4 + 23 q^5 + 42 q^6 + 77 q^7 + 136 q^8 + 
 238 q^9 + 410 q^{10} +\cdots ,
\end{align*}

\begin{align*}
\chi_q(\mc M_4)= \prod_{k=0}^\infty \dfrac{(1+q^{5k+1})^{2k}(1+q^{5k+2})^{4k+2}(1+q^{5k+3})^{4k+2}(1+q^{5k+4})^{2k+2}}{(1-q^{5k+1})^{3k+1}(1-q^{5k+2})^{k}(1-q^{5k+3})^{k+1}(1-q^{5k+4})^{3k+2}(1-q^{5k+5
})^{5k+5}}\\ =1 + q + 3 q^2 + 6 q^3 + 11 q^4 + 22 q^5 + 40 q^6 + 72 q^7 + 127 q^8 + 
 221 q^9 + 379 q^{10} +\cdots \ .
\end{align*}

\begin{figure}[H]
\centering
\begin{tikzpicture}
\foreach \x in {-6,...,8} 
        {
        \draw (\x,-.5) node{\tiny \x};
        }   
        
\foreach \y in {0,...,3} 
    \foreach \x in {-6,...,8}
        {
        \draw (\x,\y)+(0,0.74*\y) circle (0.15cm);
        \draw (\x,\y)+(.5, 0.87+0.74*\y) circle (0.15cm);
        }

\foreach \x in {-1,...,1} 
        {
        \draw[shift={(5*\x,0)}, rotate=45] (-0.15,0)--(0.15,0);
		\draw[shift={(5*\x,0)}, rotate=-45] (-0.15,0)--(0.15,0);
		\draw[shift={(5*\x+3,0)}, rotate=45] (-0.15,0)--(0.15,0);
		\draw[shift={(5*\x+3,0)}, rotate=-45] (-0.15,0)--(0.15,0);
        }

\foreach \t in {0,...,1} 
    {
    \foreach \x in {0,...,1} 
    \foreach \s in {0,...,3}
        {
        \draw[shift={(-2.5+\s+5*\x,0.87+5.22*\t)}, rotate=45] (-0.15,0)--(0.15,0);
		\draw[shift={(-2.5+\s+5*\x,0.87+5.22*\t)}, rotate=-45] (-0.15,0)--(0.15,0);
        }       
        \draw[shift={(7.5,0.87+5.22*\t)}, rotate=45] (-0.15,0)--(0.15,0);
        \draw[shift={(7.5,0.87+5.22*\t)}, rotate=-45] (-0.15,0)--(0.15,0);
        \draw[shift={(8.5,0.87+5.22*\t)}, rotate=45] (-0.15,0)--(0.15,0);
        \draw[shift={(8.5,0.87+5.22*\t)}, rotate=-45] (-0.15,0)--(0.15,0);
        }
        \draw[shift={(-5.5,0.87)}, rotate=45] (-0.15,0)--(0.15,0);
        \draw[shift={(-5.5,0.87)}, rotate=-45] (-0.15,0)--(0.15,0);
        \draw[shift={(-4.5,0.87)}, rotate=45] (-0.15,0)--(0.15,0);
         \draw[shift={(-4.5,0.87)}, rotate=-45] (-0.15,0)--(0.15,0);
        \draw[shift={(-5.5,0.87+5.22)}, rotate=45] (-0.15,0)--(0.15,0);
        \draw[shift={(-5.5,0.87+5.22)}, rotate=-45] (-0.15,0)--(0.15,0);
        \draw[shift={(-4.5,0.87+5.22)}, rotate=45] (-0.15,0)--(0.15,0);
        \draw[shift={(-4.5,0.87+5.22)}, rotate=-45] (-0.15,0)--(0.15,0);
		
\foreach \t in {0,...,1}
    {
    \foreach \x in {-1,...,1} 
        \foreach \s in {0,...,3}
            {
            \draw[shift={(\s+5*\x,1.74+3.48*\t)}, rotate=45] (-0.15,0)--(0.15,0);
		    \draw[shift={(\s+5*\x,1.74+3.48*\t)}, rotate=-45] (-0.15,0)--(0.15,0);
            }
           
        }

\foreach \s in {0,...,1}        
    {\foreach \x in {0,...,1} 
        {
        \draw[shift={(-2.5+5*\x,2.61+1.74*\s)}, rotate=45] (-0.15,0)--(0.15,0);
		\draw[shift={(-2.5+5*\x,2.61+1.74*\s)}, rotate=-45] (-0.15,0)--(0.15,0);
		\draw[shift={(+0.5+5*\x,2.61+1.74*\s)}, rotate=45] (-0.15,0)--(0.15,0);
		\draw[shift={(0.5+5*\x,2.61+1.74*\s)}, rotate=-45] (-0.15,0)--(0.15,0);
        }       
        \draw[shift={(7.5,2.61+1.74*\s)}, rotate=45] (-0.15,0)--(0.15,0);
        \draw[shift={(7.5,2.61+1.74*\s)}, rotate=-45] (-0.15,0)--(0.15,0);  
        \draw[shift={(+0.5-5,2.61+1.74*\s)}, rotate=45] (-0.15,0)--(0.15,0);
		\draw[shift={(0.5-5,2.61+1.74*\s)}, rotate=-45] (-0.15,0)--(0.15,0);
        }

\draw (-4.1,6.26)--(0,-0.87)--(4.1,6.26); 
\draw[blue] (-3.1,6.26)--(1,-0.87)--(5.1,6.26); 
\draw[red] (-2.1,6.26)--(2,-0.87)--(6.1,6.26); 
\draw[green] (-1.1,6.26)--(3,-0.87)--(7.1,6.26); 
\draw[orange] (-0.1,6.26)--(4,-0.87)--(8.1,6.26);
\draw[dashed] (-6.1,3.91)--(8.7,3.91);

\end{tikzpicture}
\caption{Roots of $\gl_\infty^\s$.}
\label{root pic}
\end{figure}

The structure of the formula can be seen from Figure \ref{root pic}, where we show the parities of all $\gl_{\infty}^\s$ roots. The bottom row corresponds to simple roots, the next row to roots which are sums of two simple roots, etc. Note the 5-periodicity along the rows and along the colored lines.
Each colored V line shows which roots contribute to the character. The black V line corresponds to $\mc M_0,$ blue to $\mc M_1$, red to $\mc M_2$, green to $\mc M_3$, and orange to $\mc M_4$. The odd roots (shown by
\begin{tikzpicture}[baseline=-3pt]
\draw (0,0pt) circle (0.15cm);
\draw[rotate=45] (-0.15,0)--(0.15,0);
\draw[rotate=-45] (-0.15,0)--(0.15,0); 
\end{tikzpicture}) give a factor in the numerator, while even roots (shown by \begin{tikzpicture}[baseline=-3pt]
\draw (0,0pt) circle (0.15cm);
\end{tikzpicture}) give a factor in the denominator.

Then, we use the periodicity. For example, the contribution of the row of height $5k+3$ to the character of $\mc M_0$ has $k$ periods, all containing four odd roots and one even root plus three more roots, which have the parities as the roots in the third row inside the black $V$. The total is $4k+2$ on the top and $k$ and $k+1$ on the bottom. 

\medskip

Using the same method as in Theorem \ref{Macmahon char thm}, we can count the plane $\s$-partitions restricted to a ``box'' of infinite height.

Let the color of the box $(0,0,0)$ be $k$. Consider plane $\s$-partitions $(\vec{\bs\la},\vec{\bs\mu})$ corresponding to any fixed parity $\s$.

For $L_1,L_2\in \Z_{\geq 0}$, we say that a plane $\s$-partition $(({\bs \lambda}^{(1)},\bs \mu^{(1)}),\dots, ({\bs \lambda}^{(k)},\bs \mu^{(k)}))$ is $L_1\times L_2$ {\it restricted} if $\lambda^{(1)}_1\leq L_1$, $\mu^{(1)}_1\leq L_2$.

Recall that degree of a plane $\s$-partition is $\prod_{i=0}^{N-1} z_i^{c_i}$, where $c_i$ is the number of boxes of color $i$.

Let $\chi_{L_1,L_2}=\sum \deg(\vec{\bs\la},\vec{\bs\mu})$, where the sum is over all $L_1\times L_2$ restricted plane $\s$-partitions, be the generating function for $L_1\times L_2$ restricted plane $\s$-partitions.

\begin{thm}\label{Macmahon char thm 2}
The generating function $\chi_{L_1,L_2}$ is given by
\begin{align*}
  \chi_{L_1,L_2}=\prod_{k-L_1\leq i\leq k} \prod_{k \leq j\leq k+L_2} (1-{\sigma(i,j)}z_{i,j})^{-\sigma(i,j)}.
\end{align*}
\end{thm}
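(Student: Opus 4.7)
The plan is to run the proof of Theorem~\ref{Macmahon char thm} with finite bounds on the tableau alphabets instead of taking $L\to\infty$, so the MacMahon-type product becomes finite. First, I would reuse the bijection between plane $\s$-partitions $(\vec{\bs\la},\vec{\bs\mu})$ and admissible pairs of reverse semistandard super-tableaux $(T^{(1)},T^{(2)})$ of common shape $\bs\nu$ constructed in the proof of Theorem~\ref{Macmahon char thm}. Under this bijection, since $\la^{(1)}_1=T^{(1)}_{1,1}$ and $\mu^{(1)}_1=T^{(2)}_{1,1}-1$ are the largest entries of the respective reverse tableaux, the $L_1\times L_2$ restriction translates cleanly into the bound $T^{(1)}_{i,j}\leq L_1$ on all entries of $T^{(1)}$ and the bound $T^{(2)}_{i,j}\leq L_2+1$ on all entries of $T^{(2)}$. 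In particular, the shape $\bs\nu$ is still unrestricted, encoding the infinite vertical direction.

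Next, I would apply the involution $T_{i,j}\mapsto L+1-T_{i,j}$ on each tableau to convert to ordinary semistandard super-tableaux with entries in bounded alphabets. These tableaux are indexed by truncated parity subsequences $\s_1\subset \s$ and $\s_2\subset \s'$ of lengths $L_1+1$ and $L_2+1$ respectively, where the precise placements are determined by the color $k$ of the origin box. Summing over $\bs\nu$, the resulting bases assemble into the bimodule
\[
\bigoplus_{\bs\nu} L_{\bs\nu}^{\s_1}\otimes L_{\bs\nu}^{\s_2},
\]
with the degree/parity tracking of the bijection preserved.

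By the super $\gl\times\gl$ Howe duality of \cite[Theorem 2.1]{S} and \cite[Theorem 3.2]{CW}, this bimodule is isomorphic to the polynomial superalgebra $\C[x_{i,j}]$ on generators $x_{i,j}$ indexed by pairs $(i,j)$ with $k-L_1\leq i\leq k$, $k\leq j\leq k+L_2$, where $x_{i,j}$ has parity $\sigma(i,j)$ (the product of the row and column parities) and $\Z^N$-degree $z_{i,j}=z_iz_{i+1}\cdots z_j$ (the range of colors swept by the corresponding box-strip under the bijection). The character of a free supersymmetric algebra is the product of $(1-\deg)^{-1}$ over even generators and $(1+\deg)$ over odd generators, which uniformly reads $\prod_{i,j}(1-\sigma(i,j)z_{i,j})^{-\sigma(i,j)}$, yielding the claimed formula.

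The main obstacle is purely bookkeeping of indices: carefully verifying that after the color-$k$ offset, the bar-map convention \eqref{overline}, the $-1$ shift relating $\mu^{(i)}$ to $T^{(2)}$, and the reversal $T_{i,j}\mapsto L+1-T_{i,j}$, the range of $(i,j)$ labelling the generators $x_{i,j}$ matches exactly the product range $k-L_1\leq i\leq k$, $k\leq j\leq k+L_2$ in the statement, and that the parity/degree assignments transport correctly under the bijection. Once this matching is made, the theorem follows as a direct consequence of finite-dimensional super Howe duality, with no limiting argument required.
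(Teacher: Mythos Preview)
Your proposal is correct and follows exactly the paper's approach: the paper states just before Theorem~\ref{Macmahon char thm 2} that the result is obtained ``using the same method as in Theorem~\ref{Macmahon char thm}'', namely the bijection to admissible pairs of reverse super-tableaux followed by finite super Howe duality, with no limiting argument. Your identification of the bounds $T^{(1)}_{i,j}\le L_1$ and $T^{(2)}_{i,j}\le L_2+1$ is precisely how the $L_1\times L_2$ restriction enters, and, as you note, the only remaining work is the index bookkeeping matching the alphabet sizes to the product range.
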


\bigskip

{\bf Acknowledgments.}
This work was partially supported by the Simons Foundation grants \#353831 and \#709444.

\end{document}